\documentclass{amsart}
\usepackage{amsfonts}

\usepackage{graphicx}


\newtheorem{theorem}{Theorem}[section]
\theoremstyle{plain} \numberwithin{equation}{section}

\newtheorem{corollary}[theorem]{Corollary}

\newtheorem{definition}[theorem]{Definition}
\newtheorem{example}[theorem]{Example}

\newtheorem{lemma}[theorem]{Lemma}

\newtheorem{proposition}[theorem]{Proposition}
\newtheorem{remark}[theorem]{Remark}

\begin{document}
\title{The geodesic problem in quasimetric spaces}
\author{Qinglan Xia}
\address{University of California at Davis\\
Department of Mathematics\\
Davis,CA,95616} \email{qlxia@math.ucdavis.edu}
\urladdr{http://math.ucdavis.edu/\symbol{126}qlxia}
\subjclass[2000]{Primary 54E25, 51F99; Secondary 49Q20, 90B18}
\keywords{optimal transport path, quasimetric, geodesic distance,
branching structure}
\thanks{This work is supported by an NSF grant DMS-0710714.}

\begin{abstract}
In this article, we study the geodesic problem in a generalized
metric space, in which the distance function satisfies a relaxed
triangle inequality $d(x,y)\leq \sigma (d(x,z)+d(z,y))$ for some
constant $\sigma \geq 1$, rather than the usual triangle inequality.
Such a space is called a quasimetric space. We show that many
well-known results in metric spaces (e.g. Ascoli-Arzel\`{a} theorem)
still hold in quasimetric spaces. Moreover, we explore conditions
under which a quasimetric will induce an intrinsic metric. As an
example, we introduce a family of quasimetrics on the space of
atomic probability measures. The associated intrinsic metrics
induced by these quasimetrics coincide with the $d_{\alpha}$ metric
studied early in the study of branching structures arisen in
ramified optimal transportation. An optimal transport path between
two atomic probability measures typically has a ``tree shaped"
branching structure. Here, we show that these optimal transport
paths turn out to be geodesics in these intrinsic metric spaces.
\end{abstract}

\maketitle

\section{\protect\bigskip Introduction}

This article aims at studying some classical analysis problems in
semimetric spaces, in which the distance is not required to satisfy
the triangle inequity. During the author's recent study of optimal
transport path between probability measures, he observes that there
exists a family of very interesting semimetrics on the space of
atomic probability measures. These semimetrics satisfy a relaxed
triangle inequality $d\left( x,y\right) \leq \sigma \left( d\left(
x,z\right) +d\left( z,y\right) \right) $ for some constant $\sigma
\geq 1$, rather than the usual triangle inequality. Such semimetric
spaces are called quasimetric spaces \footnote{When this article was
submitted, the author used the term ``nearmetric" as in \cite{fks}
instead of ``quasimetric". Later, Professor Nigel Kalton kindly let
the author know the term ``quasimetric" used in the book
\cite{Hein}. Thus, in the final version of the article,  we replaced
the previous term ``nearmetric" with this more suitable term
``quasimetric".} in \cite{Hein}. Moreover, these family of
quasimetrics indeed induce a family of intrinsic metrics on the
space of atomic probability measures. Furthermore, optimal transport
paths studied early in
\cite{xia1},\cite{xia2},\cite{xia3},\cite{xia4},\cite{xia5} etc turn
out to be exactly geodesics in these induced metric spaces. This
observation motivates us to study the geodesic problem in
quasimetric spaces in this article. Other closely related works on
ramified optimal transportation may be found in
\cite{BCM},\cite{DH},\cite{gilbert},\cite{msm} etc.

This article is organized as follows. In section 2, we first
introduce the concept as well as some basic properties of
quasimetric spaces, then we extend some well-known results (e.g.
Ascoli-Arzel\`{a} theorem) about continuous functions in metric
spaces to continuous functions in quasimetric spaces. After that, in
section 3, we consider the geodesic problem in quasimetric spaces.
We show that every continuous quasimetric will induce an intrinsic
pseudometric on the space. In case that the quasimetric is nice
enough (e.g. either ``ideal'' or ``perfect'' in the sense of Definition \ref%
{def_ideal} or Definition \ref{perfect}), then the quasimetric will
indeed induce an intrinsic metric. In the end, we spend the last
section in discussing our motivation example: optimal transport
paths between atomic probability measures. We first introduce a
family of quasimetrics on the space of atomic probability measures.
Each of these quasimetric is both ideal and perfect, and thus it
induces an intrinsic metric on the space of atomic probability
measures. We showed that the $d_{\alpha }$-metrics introduced in\
\cite{xia1} is simply the intrinsic metrics induced by these
quasimetrics. Furthermore, each geodesic in these length spaces
corresponds to an optimal transport path studied in \cite{xia1}.

\section{Continuous maps in quasimetric spaces}

\subsection{Quasimetric Spaces}

\begin{definition}
\label{near_metric_def}Let $X$ be any nonempty set. A function
$J:X\times X\rightarrow \mathbb{R}$ is called a quasimetric if for
any $x,y,z\in X$, we have

\begin{enumerate}
\item \label{condition_1}(non-negativity) $J\left( x,y\right) \geq 0$;

\item \label{condition_2}(identity of indiscernibles) $J\left( x,y\right) =0$
if and only if $x=y$

\item \label{condition_3}(symmetry) $J\left( x,y\right) =J\left( y,x\right)
; $

\item \label{condition_4}(relaxed triangle inequality) $J\left( x,y\right)
\leq \sigma \left[ J\left( x,z\right) +J\left( z,y\right) \right] $ for some
constant $\sigma \geq 1$.
\end{enumerate}

When $J$ is a quasimetric on $X$, the pair $\left( X,J\right) $ is
called \textit{a quasimetric space}. Let $\sigma \left( J\right) $
denote the smallest number $\sigma $ satisfying condition
(\ref{condition_4}).
\end{definition}

Every metric space is clearly a quasimetric space with $\sigma =1$.

\begin{example}
Suppose $d$ is a metric on a nonempty set $X$. Then, for any $\beta
>1,\lambda \geq 0,\mu >0$, $J\left( x,y\right) =\lambda d(x,y)+\mu d\left(
x,y\right) ^{\beta }$ is typically not a metric on $X$. However, $J$
defines a quasimetric on $X$ with $\sigma \left( J\right) \leq
2^{\beta -1}$. Indeed,\
\begin{eqnarray*}
J\left( x,y\right) &=&\lambda d(x,y)+\mu d\left( x,y\right) ^{\beta } \\
&\leq &\lambda \left[ d(x,z)+d(y,z)\right] +\mu \left[ d(x,z)+d(y,z)\right]
^{\beta } \\
&\leq &\lambda \left[ d(x,z)+d(y,z)\right] +2^{\beta -1}\mu \left[
d(x,z)^{\beta }+d(y,z)^{\beta }\right] \\
&\leq &2^{\beta -1}\left[ J\left( x,z\right) +J\left( z,y\right) \right] .
\end{eqnarray*}
\end{example}

In section 4, we will provide a family of interesting quasimetrics
on the space of atomic probability measures.

More generally, suppose $J$ is a distance function on $X$ satisfying
conditions (\ref{condition_1}),(\ref{condition_2}),(\ref{condition_3}) in
Definition \ref{near_metric_def}. For each $n$, let $\sigma _{n}\left(
J\right) $ be the smallest number $\sigma _{n}\geq 1$ satisfying
\begin{equation}
J\left( x_{1},x_{n+1}\right) \leq \sigma _{n}\sum_{i=1}^{n}J\left(
x_{i},x_{i+1}\right) ,  \label{order_n}
\end{equation}%
for any $x_{1}$,$\cdots ,x_{n+1}\in X$. In particular, $\sigma _{1}\left(
J\right) =1$ and $\sigma _{2}\left( J\right) =\sigma \left( J\right) $.

\begin{lemma}
\label{sigma_n}Suppose $\left( X,J\right) $ is a quasimetric space.
Then, for each $n$,
\begin{equation*}
\sigma _{n}\left( J\right) \leq \sigma \left( J\right) ^{n-1}.
\end{equation*}
\end{lemma}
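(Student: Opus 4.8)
The plan is to argue by induction on $n$. The base cases are already recorded just before the statement: $\sigma_{1}(J)=1=\sigma(J)^{0}$ and $\sigma_{2}(J)=\sigma(J)=\sigma(J)^{1}$, so there is nothing to prove for $n\le 2$. Assume now that $n\ge 2$ and that the bound $\sigma_{n-1}(J)\le \sigma(J)^{\,n-2}$ has been established (when $n=2$ this reads $\sigma_{1}(J)=1\le\sigma(J)^{0}$, which is trivially true).

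For the inductive step, fix arbitrary points $x_{1},\dots ,x_{n+1}\in X$. First apply the relaxed triangle inequality, condition (\ref{condition_4}) of Definition \ref{near_metric_def}, to the triple $x_{1},x_{2},x_{n+1}$ to peel off the first segment:
\begin{equation*}
J\left( x_{1},x_{n+1}\right) \leq \sigma \left( J\right) \bigl[ J\left( x_{1},x_{2}\right) + J\left( x_{2},x_{n+1}\right) \bigr].
\end{equation*}
Next, the points $x_{2},x_{3},\dots ,x_{n+1}$ form a chain with $n-1$ consecutive segments, so by the defining property of $\sigma_{n-1}(J)$ in \eqref{order_n} together with the inductive hypothesis,
\begin{equation*}
J\left( x_{2},x_{n+1}\right) \leq \sigma _{n-1}(J)\sum_{i=2}^{n} J\left( x_{i},x_{i+1}\right) \leq \sigma (J)^{\,n-2}\sum_{i=2}^{n} J\left( x_{i},x_{i+1}\right).
\end{equation*}
Substituting this into the previous inequality gives
\begin{equation*}
J\left( x_{1},x_{n+1}\right) \leq \sigma (J)\, J\left( x_{1},x_{2}\right) + \sigma (J)^{\,n-1}\sum_{i=2}^{n} J\left( x_{i},x_{i+1}\right).
\end{equation*}
Since $\sigma (J)\ge 1$ we have $\sigma (J)\le \sigma (J)^{\,n-1}$, so each coefficient on the right-hand side is bounded by $\sigma (J)^{\,n-1}$, whence $J(x_{1},x_{n+1})\le \sigma (J)^{\,n-1}\sum_{i=1}^{n} J(x_{i},x_{i+1})$. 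As $x_{1},\dots ,x_{n+1}$ were arbitrary and $\sigma_{n}(J)$ is by definition the least admissible constant in \eqref{order_n}, we conclude $\sigma_{n}(J)\le \sigma (J)^{\,n-1}$, completing the induction.

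I expect no genuine obstacle here; the only point requiring a moment's care is the bookkeeping of constants, namely that the first segment $J(x_{1},x_{2})$ accrues only a single factor $\sigma(J)$ while the remaining segments accrue $\sigma(J)^{\,n-1}$, so one must replace the former coefficient by the larger one — which is exactly where $\sigma(J)\ge 1$ enters. (One could instead split the chain near its midpoint and iterate, obtaining the sharper bound $\sigma_{n}(J)\le \sigma(J)^{\lceil \log_{2}n\rceil}$, but the stated estimate $\sigma(J)^{\,n-1}$ suffices for our purposes and follows from the simpler one-step peeling above.)
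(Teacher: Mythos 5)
Your proof is correct and is essentially the paper's argument: induction on $n$ with a single application of the relaxed triangle inequality to peel off one segment, the inductive hypothesis applied to the remaining chain, and $\sigma(J)\ge 1$ used to absorb the smaller coefficient. The only (immaterial) difference is that you peel off the first segment $J(x_{1},x_{2})$ whereas the paper peels off the last one.
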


\begin{proof}
We show this using the mathematical induction. It is trivial when $n=1$ or $%
2 $. Then, from condition (\ref{condition_4}), we see that for any $n $ and
any points $\left\{ x_{1},x_{2},\cdots ,x_{n}\right\} $ in $X$, we have
\begin{eqnarray*}
J\left( x_{1},x_{n}\right) &\leq &\sigma \left( J\right) \left( J\left(
x_{1},x_{n-1}\right) +J\left( x_{n-1},x_{n}\right) \right) \\
&\leq &\sigma \left( J\right) \left( \sigma \left( J\right)
^{n-2}\sum_{i=1}^{n-2}J\left( x_{i},x_{i+1}\right) +J\left(
x_{n-1},x_{n}\right) \right) \\
&\leq &\sigma \left( J\right) ^{n-1}\sum_{i=1}^{n-1}J\left(
x_{i},x_{i+1}\right) \text{ since }\sigma \left( J\right) \geq 1.
\end{eqnarray*}%
Therefore, $\sigma _{n}\left( J\right) \leq \sigma \left( J\right) ^{n-1}$
for all $n$.
\end{proof}

\begin{proposition}
\bigskip Suppose $\left( X,J\right) $ is a quasimetric space. Then, for each $%
n$ and $m$ in $\mathbb{N}$,
\begin{equation*}
\sigma _{nm}\left( J\right) \leq \sigma _{n}\left( J\right) \sigma
_{m}\left( J\right) .
\end{equation*}
\end{proposition}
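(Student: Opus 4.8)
The plan is to exploit the fact that a chain of $nm$ consecutive edges can be grouped into $n$ blocks of $m$ edges each: I apply the defining inequality for $\sigma_n(J)$ to the $n+1$ block endpoints, and then apply the defining inequality for $\sigma_m(J)$ separately inside each block.

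Concretely, fix arbitrary points $x_1,x_2,\ldots,x_{nm+1}$ in $X$. By the definition of $\sigma_{nm}(J)$ as the smallest constant satisfying (\ref{order_n}) over all admissible chains, it suffices to prove
\[
J\left(x_1,x_{nm+1}\right)\leq \sigma_n(J)\,\sigma_m(J)\sum_{i=1}^{nm}J\left(x_i,x_{i+1}\right).
\]
Introduce the ``coarse'' points $y_j:=x_{(j-1)m+1}$ for $j=1,2,\ldots,n+1$, so that $y_1=x_1$ and $y_{n+1}=x_{nm+1}$. Applying (\ref{order_n}) with the constant $\sigma_n(J)$ to the $n+1$ points $y_1,\ldots,y_{n+1}$ gives
\[
J\left(x_1,x_{nm+1}\right)=J\left(y_1,y_{n+1}\right)\leq \sigma_n(J)\sum_{j=1}^{n}J\left(y_j,y_{j+1}\right).
\]

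For each fixed $j$, the pair $y_j,y_{j+1}$ equals $x_{(j-1)m+1},x_{jm+1}$, namely the two endpoints of the chain of $m+1$ points $x_{(j-1)m+1},x_{(j-1)m+2},\ldots,x_{jm+1}$. Applying (\ref{order_n}) again, this time with the constant $\sigma_m(J)$, yields
\[
J\left(y_j,y_{j+1}\right)\leq \sigma_m(J)\sum_{i=(j-1)m+1}^{jm}J\left(x_i,x_{i+1}\right).
\]
Summing over $j=1,\ldots,n$ and using that the index blocks $\{(j-1)m+1,\ldots,jm\}$ partition $\{1,\ldots,nm\}$, one gets $\sum_{j=1}^{n}J(y_j,y_{j+1})\leq \sigma_m(J)\sum_{i=1}^{nm}J(x_i,x_{i+1})$; substituting this into the previous display gives the desired inequality, and hence $\sigma_{nm}(J)\leq \sigma_n(J)\sigma_m(J)$.

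I do not expect a genuine obstacle here; the argument is essentially careful index bookkeeping. The only point worth keeping straight is the asymmetric use of minimality: $\sigma_n(J)$ and $\sigma_m(J)$ are used merely as \emph{valid} constants in (\ref{order_n}) (they satisfy the inequality for all chains), while the minimality of $\sigma_{nm}(J)$ is invoked only at the very end, when one passes from the pointwise estimate to the conclusion about $\sigma_{nm}(J)$.
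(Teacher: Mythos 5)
Your proof is correct and follows essentially the same route as the paper: group the chain of $nm$ edges into $n$ blocks of $m$ edges, apply the $\sigma_n(J)$ inequality to the coarse chain of block endpoints $x_1, x_{m+1},\ldots,x_{nm+1}$, and then the $\sigma_m(J)$ inequality within each block. Your closing remark about using $\sigma_n(J)$ and $\sigma_m(J)$ only as valid constants, and the minimality of $\sigma_{nm}(J)$ only at the end, is exactly the right bookkeeping and matches the paper's argument.
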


\begin{proof}
Note that, for any $\left\{ x_{1},x_{2},\cdots ,x_{mn+1}\right\} $ in $X$,
from (\ref{order_n}), we have
\begin{eqnarray*}
& &J\left( x_{1},x_{mn+1}\right) \\
&\leq &\sigma _{n}\left( J\right) \left( J\left( x_{1},x_{m+1}\right)
+J\left( x_{m+1},x_{2m+1}\right) +\cdots +J\left(
x_{(n-1)m+1},x_{nm+1}\right) \right) \\
&\leq &\sigma _{n}\left( J\right) \left( \sigma _{m}\left( J\right)
\sum_{i=1}^{m}J\left( x_{i},x_{i+1}\right) +\cdots +\sigma _{m}\left(
J\right) \sum_{i=(n-1)m+1}^{nm}J\left( x_{i},x_{i+1}\right) \right) \\
&=&\sigma _{n}\left( J\right) \sigma _{m}\left( J\right)
\sum_{i=1}^{nm}J\left( x_{i},x_{i+1}\right) .
\end{eqnarray*}%
Therefore,
\begin{equation*}
\sigma _{nm}\left( J\right) \leq \sigma _{n}\left( J\right) \sigma
_{m}\left( J\right) .
\end{equation*}
\end{proof}

Clearly, $\sigma _{n}\left( J\right) $ is nondecreasing as $n$ increases.
Thus, we define
\begin{equation}
\sigma _{\infty }\left( J\right) :=\lim_{n}\sigma _{n}\left( J\right)
\label{ideal}
\end{equation}%
for any quasimetric $J$ on $X$.

\begin{definition}
\label{def_ideal}Suppose $J$ is a quasimetric on $X$. If $\sigma
_{\infty }\left( J\right) <\infty $, then \thinspace $J$ is called
an ideal quasimetric on $X$.
\end{definition}

Note that $J$ is an ideal quasimetric if and only if for some $\sigma \geq 1$%
,
\begin{equation}
J\left( x,y\right) \leq \sigma \sum_{i=1}^{n}J\left(
x_{i},x_{i+1}\right), \label{ideal_quasimetric}
\end{equation}%
for any finitely many points $x_{1}$,$\cdots ,x_{n+1}\in X$ with $x_{1}=x$, $%
x_{n+1}=y $. The smallest $\sigma $ satisfying
(\ref{ideal_quasimetric}) is just $\sigma _{\infty }\left( J\right)
$.

\bigskip A sequence $\left\{ x_{n}\right\} $ is \textit{convergent} to $x$
in a quasimetric space $\left( X,J\right) $ if $J\left(
x_{n},x\right) \rightarrow 0$, and we denote it by
$x_{n}\overset{J}{\rightarrow }x$. A sequence $\left\{ x_{n}\right\}
$ is \textit{Cauchy in }$\left( X,J\right) $
if for any $\epsilon >0$, there exists an $\,N\in \mathbb{N}$ such that $%
J\left( x_{n},x_{m}\right) \leq \epsilon $ for all $n,m\geq N$. \ Since $%
J\left( x_{n},x_{m}\right) \leq \sigma \left( J\right) \left(
J\left( x_{n},x\right) +J\left( x,x_{m}\right) \right) $, it follows
that every convergent sequence in $\left( X,J\right) $ is a Cauchy
sequence. If every Cauchy sequence in $\left( X,J\right) $ is
convergent, then we say $J$ is a \textit{complete} quasimetric on
$X$. A quasimetric $J$ on $X$ always gives a
topology on $X$ where a subset $A$ is closed if it contains every point $%
a\in X$ for which there is some sequence $a_{i}\in A$ with $%
\lim_{i\rightarrow \infty }J\left( a_{i},a\right) =0$.

\begin{definition}
A quasimetric $J$ on $X$ is \textit{continuous} if for any
convergent
sequences $x_{n}\overset{J}{\rightarrow }x$, $y_{n}\overset{J}{\rightarrow }%
y $, we have
\begin{equation}
J\left( x_{n},y_{n}\right) \rightarrow J\left( x,y\right) ,\text{ as }%
n\rightarrow \infty \text{.}  \label{J_continuity}
\end{equation}%
If for any convergent sequences $x_{n}\overset{J}{\rightarrow }x$, $y_{n}%
\overset{J}{\rightarrow }y$, we have
\begin{equation}
J\left( x,y\right) \leq \liminf_{n}J\left( x_{n},y_{n}\right) ,
\label{lower_semicontinuity}
\end{equation}%
then we say $J$ is lower semicontinuous.
\end{definition}

For instance, suppose $J$ satisfies conditions (\ref{condition_1}),(\ref%
{condition_2}),(\ref{condition_3}) in Definition \ref{near_metric_def}, and
also the following condition%
\begin{equation}
\left| J\left( x,y\right) -J\left( z,w\right) \right| \leq \sigma \left(
J\left( x,z\right) +J\left( w,y\right) \right)  \label{continuous_extra}
\end{equation}%
for any $x,y,z,w\in X$ and some $\sigma \geq 1$. By setting $z=w$, we get $%
J\left( x,y\right) \leq \sigma \left[ J\left( x,z\right) +J\left( z,y\right) %
\right] $, and hence $J$ is a quasimetric on $X$. Also, since for
each $n$,
\begin{equation*}
\left| J\left( x_{n},y_{n}\right) -J\left( x,y\right) \right| \leq \sigma
\left( J\left( x,x_{n}\right) +J\left( y,y_{n}\right) \right) ,
\end{equation*}%
$J$ is automatically satisfying the continuous condition (\ref{J_continuity}%
) in this case. When $J$ is indeed a metric on $X$, then (\ref%
{continuous_extra}) trivially holds.

\subsection{Continuous maps in quasimetric spaces}

In this section, we extend some well-known results (see for instance in \cite%
{hunter} or \cite{Ambrosio1}) about continuous maps in metric spaces
to continuous maps in quasimetric spaces.

Suppose $\left( X,J\right) $ is a quasimetric space, and $K$ is a
compact metric space with a metric $d_{K}$. A map $f:K\rightarrow
\left( X,J\right) $ is \textit{continuous} if $J\left( f\left(
x_{n}\right) ,f\left( x\right) \right) \rightarrow 0$ in $X$
whenever $d_{K}\left( x_{n},x\right) \rightarrow 0$ in $K$ as
$n\rightarrow \infty $. A map $f:K\rightarrow \left( X,J\right) $ is
\textit{uniformly continuous} if for every $\epsilon
>0$, there exists a $\delta >0$ such that $J\left( f\left( x\right) ,f\left(
y\right) \right) \leq \epsilon $ whenever $x,y\in K$ with $d_{K}\left(
x,y\right) \leq \delta $. A map $f:K\rightarrow \left( X,J\right) $ is
\textit{Lipschitz} if there exists a constant $C\geq 0$ such that
\begin{equation*}
J\left( f\left( x\right) ,f\left( y\right) \right) \leq Cd_K(x,y)
\end{equation*}%
for any $x,y\in K$. Let $C\left( K,\left( X,J\right) \right) $ be the family
of all continuous maps from $K$ to $\left( X,J\right) $, and $Lip\left(
K,\left( X,J\right) \right) $ be the family of all Lipschitz maps from $K$
to $\left( X,J\right) $.

\begin{proposition}
\label{uniform_continuity}Suppose $J$ is a continuous quasimetric on
$X$. Then, every continuous map $f:K\rightarrow \left( X,J\right) $
is uniformly continuous.
\end{proposition}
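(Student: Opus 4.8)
The plan is to argue by contradiction, mimicking the classical proof that a continuous map on a compact metric space is uniformly continuous, but taking care that the target satisfies only a relaxed triangle inequality and that we have continuity of $J$ rather than genuine metric estimates. Suppose $f:K\rightarrow (X,J)$ is continuous but not uniformly continuous. Then there exists $\epsilon_0>0$ and, for every $n\in\mathbb{N}$, a pair of points $x_n,y_n\in K$ with $d_K(x_n,y_n)\leq 1/n$ but $J(f(x_n),f(y_n))>\epsilon_0$. Since $K$ is a compact metric space, it is sequentially compact, so after passing to a subsequence we may assume $x_n\to x$ in $K$ for some $x\in K$; because $d_K(x_n,y_n)\to 0$ and $d_K$ is a genuine metric, the triangle inequality in $K$ gives $y_n\to x$ as well.

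Now I would invoke continuity of $f$: $x_n\to x$ in $K$ gives $J(f(x_n),f(x))\to 0$, i.e. $f(x_n)\overset{J}{\to}f(x)$, and likewise $f(y_n)\overset{J}{\to}f(x)$. At this point the key step is to apply the hypothesis that $J$ is a \emph{continuous} quasimetric: with the two convergent sequences $f(x_n)\overset{J}{\to}f(x)$ and $f(y_n)\overset{J}{\to}f(x)$, condition (\ref{J_continuity}) yields $J(f(x_n),f(y_n))\to J(f(x),f(x))=0$, using condition (\ref{condition_2}). This directly contradicts $J(f(x_n),f(y_n))>\epsilon_0$ for all $n$, completing the argument.

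The only genuinely delicate point is making sure continuity of $J$ is what does the work rather than the triangle inequality: in a metric space one would write $J(f(x_n),f(y_n))\leq J(f(x_n),f(x))+J(f(x),f(y_n))\to 0$, but here that would only give $J(f(x_n),f(y_n))\leq \sigma(J)\big(J(f(x_n),f(x))+J(f(x),f(y_n))\big)$, which still tends to $0$ since $\sigma(J)$ is a fixed finite constant — so in fact even without the continuity hypothesis on $J$, the relaxed triangle inequality alone suffices here, and I would mention that the continuity assumption is only needed in a truly minimal way (or can be replaced by the relaxed triangle inequality). Either route closes the proof; I expect no real obstacle beyond bookkeeping the passage to subsequences and confirming $y_n\to x$ from $x_n\to x$ and $d_K(x_n,y_n)\to 0$.
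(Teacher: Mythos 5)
Your argument is correct and is essentially the paper's own proof: contradiction, extraction of subsequences converging to a common point $x^{\ast}\in K$, continuity of $f$, and then continuity of $J$ via (\ref{J_continuity}) to force $J(f(x_n),f(y_n))\to J(f(x^{\ast}),f(x^{\ast}))=0$, contradicting the lower bound $\epsilon_0$. Your closing observation is also sound: the relaxed triangle inequality alone gives $J(f(x_n),f(y_n))\leq \sigma(J)\left(J(f(x_n),f(x^{\ast}))+J(f(x^{\ast}),f(y_n))\right)\to 0$, so the continuity hypothesis on $J$ is not actually needed for this particular proposition, a slight sharpening the paper does not record.
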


\begin{proof}
Suppose $f:K\rightarrow \left( X,J\right) $ is continuous. If $f$ is not
uniformly continuous, then there exists an $\epsilon >0$, and two sequences $%
\left\{ x_{n}\right\} ,\left\{ y_{n}\right\} $ in $K$ such that $d\left(
x_{n},y_{n}\right) \leq \frac{1}{n}$, but $J\left( f\left( x_{n}\right)
,f\left( y_{n}\right) \right) \geq \epsilon $. By the compactness of $K$ and
taking subsequence if necessary, we may assume that both $\left\{
x_{n}\right\} $ and $\left\{ y_{n}\right\} $ converge to the same point $%
x^{\ast }\in K$. So, by the continuity of $J$ in (\ref{J_continuity}) and
the continuity of $f$ at $x^{\ast }$, we have
\begin{equation*}
0=J\left( f\left( x^{\ast }\right) ,f\left( x^{\ast }\right) \right)
=\lim_{n\rightarrow \infty }J\left( f\left( x_{n}\right) ,f\left(
y_{n}\right) \right) \geq \epsilon .
\end{equation*}%
A contradiction. Thus,\thinspace $f$ must be uniformly continuous.
\end{proof}

For any maps $f,h:K\rightarrow \left( X,J\right) $, let
\begin{equation}
J_{\infty }\left( f,h\right) :=\sup_{x\in K}J\left( f\left( x\right)
,h\left( x\right) \right) .  \label{J_infty}
\end{equation}%
If $J_{\infty }\left( f_{n},f\right) \rightarrow 0$, then we say that $f_{n}$
is \textit{uniformly convergent} to $f$.

\begin{proposition}
\label{J_infty_quasimetric}Suppose $J$ is a quasimetric on $X$. Then, $%
J_{\infty }$ is \ a quasimetric on $C\left( K,\left( X,J\right)
\right) $.
\end{proposition}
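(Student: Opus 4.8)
The plan is to verify the four defining conditions of Definition \ref{near_metric_def} for $J_{\infty}$ on the set $C\left(K,\left(X,J\right)\right)$. Three of these are routine once one knows that $J_{\infty}$ actually maps into $\mathbb{R}$; the only genuinely nontrivial point is that preliminary one, namely that $J_{\infty}\left(f,h\right)<\infty$ whenever $f,h$ are continuous maps from $K$ to $\left(X,J\right)$. So I would dispatch finiteness first and then read off conditions (\ref{condition_1})--(\ref{condition_4}) by taking suprema of the corresponding pointwise statements for $J$.

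For finiteness, I would first check that any continuous $f:K\rightarrow\left(X,J\right)$ has finite $J$-diameter, $D(f):=\sup_{x,y\in K}J\left(f\left(x\right),f\left(y\right)\right)<\infty$. If not, choose $x_{n},y_{n}\in K$ with $J\left(f\left(x_{n}\right),f\left(y_{n}\right)\right)\rightarrow\infty$; by compactness of $K$, passing to a subsequence we may assume $x_{n}\rightarrow x^{\ast}$ and $y_{n}\rightarrow y^{\ast}$ in $K$, whence continuity of $f$ gives $J\left(f\left(x_{n}\right),f\left(x^{\ast}\right)\right)\rightarrow 0$ and $J\left(f\left(y_{n}\right),f\left(y^{\ast}\right)\right)\rightarrow 0$. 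Applying (\ref{order_n}) with $n=3$ to the four points $f\left(x_{n}\right),f\left(x^{\ast}\right),f\left(y^{\ast}\right),f\left(y_{n}\right)$ yields
\begin{equation*}
J\left(f\left(x_{n}\right),f\left(y_{n}\right)\right)\leq\sigma_{3}\left(J\right)\bigl(J\left(f\left(x_{n}\right),f\left(x^{\ast}\right)\right)+J\left(f\left(x^{\ast}\right),f\left(y^{\ast}\right)\right)+J\left(f\left(y^{\ast}\right),f\left(y_{n}\right)\right)\bigr),
\end{equation*}
and since $\sigma_{3}\left(J\right)\leq\sigma\left(J\right)^{2}<\infty$ by Lemma \ref{sigma_n}, the right-hand side stays bounded as $n\rightarrow\infty$, a contradiction. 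Now fix a base point $x_{0}\in K$; applying (\ref{order_n}) with $n=3$ to $f\left(x\right),f\left(x_{0}\right),h\left(x_{0}\right),h\left(x\right)$ gives, for every $x\in K$,
\begin{equation*}
J\left(f\left(x\right),h\left(x\right)\right)\leq\sigma_{3}\left(J\right)\bigl(D(f)+J\left(f\left(x_{0}\right),h\left(x_{0}\right)\right)+D(h)\bigr),
\end{equation*}
a finite bound independent of $x$, so $J_{\infty}\left(f,h\right)<\infty$.

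With well-definedness in hand, the axioms follow by passing to suprema. Non-negativity is immediate since each $J\left(f\left(x\right),h\left(x\right)\right)\geq 0$. If $J_{\infty}\left(f,h\right)=0$, then $J\left(f\left(x\right),h\left(x\right)\right)=0$ and hence $f\left(x\right)=h\left(x\right)$ for every $x\in K$, i.e. $f=h$; conversely $f=h$ clearly gives $J_{\infty}\left(f,h\right)=0$. Symmetry of $J_{\infty}$ is inherited termwise from symmetry of $J$. Finally, for $f,g,h\in C\left(K,\left(X,J\right)\right)$ and any $x\in K$,
\begin{equation*}
J\left(f\left(x\right),h\left(x\right)\right)\leq\sigma\left(J\right)\bigl(J\left(f\left(x\right),g\left(x\right)\right)+J\left(g\left(x\right),h\left(x\right)\right)\bigr)\leq\sigma\left(J\right)\bigl(J_{\infty}\left(f,g\right)+J_{\infty}\left(g,h\right)\bigr);
\end{equation*}
taking the supremum over $x$ gives the relaxed triangle inequality for $J_{\infty}$, with in fact $\sigma\left(J_{\infty}\right)\leq\sigma\left(J\right)$.

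The single real obstacle here is the finiteness of $J_{\infty}$, and it is exactly there that compactness of $K$ is used; the verification of the four conditions is otherwise a mechanical passage to suprema. It is worth emphasizing that continuity of the quasimetric $J$ itself is not required for this proposition --- only continuity of the maps $f,h$ together with the finite multiplicativity constants $\sigma_{n}\left(J\right)$ provided by Lemma \ref{sigma_n}.
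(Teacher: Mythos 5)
Your proposal is correct and follows essentially the same route as the paper's proof: each of the four conditions of Definition \ref{near_metric_def} is obtained by passing the pointwise inequalities for $J$ to suprema over $K$, yielding in particular $\sigma\left(J_{\infty}\right)\leq \sigma\left(J\right)$. Your preliminary verification that $J_{\infty}\left(f,h\right)<\infty$ (via compactness of $K$, continuity of $f,h$, and Lemma \ref{sigma_n}) is a sound addition that the paper's proof leaves implicit.
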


\begin{proof}
For any $f,h\in C\left( K,\left( X,J\right) \right) $, by definition (\ref%
{J_infty}), we have $J_{\infty }\left( f,h\right) \geq 0$ and $J_{\infty
}\left( f,h\right) =J_{\infty }\left( h,f\right) $. Also, $J_{\infty }\left(
f,h\right) =0$ if and only if $f\left( x\right) =h\left( x\right) $ for all $%
x\in K$. Moreover, for any $g\in C\left( K,\left( X,J\right) \right) $,
\begin{eqnarray*}
J_{\infty }\left( f,h\right) &=&\sup_{x\in K}J\left( f\left( x\right)
,h\left( x\right) \right) \\
&\leq &\sup_{x\in K}\sigma \left( J\right) \left[ J\left( f\left( x\right)
,g\left( x\right) \right) +J\left( g\left( x\right) ,h\left( x\right)
\right) \right] \\
&\leq &\sigma \left( J\right) \left[ \sup_{x\in K}J\left( f\left( x\right)
,g\left( x\right) \right) +\sup_{x\in K}J\left( g\left( x\right) ,h\left(
x\right) \right) \right] \\
&=&\sigma \left( J\right) \left[ J_{\infty }\left( f,g\right) +J_{\infty
}\left( g,h\right) \right] .
\end{eqnarray*}

Therefore, $\left( C\left( K,\left( X,J\right) \right) ,J_{\infty
}\right) $ is also a quasimetric space.
\end{proof}

\begin{proposition}
\label{limit_of_uniform_continuous}Suppose $\left\{ f_{n}:K\rightarrow
\left( X,J\right) \right\} $ is a sequence of continuous maps. If $J_{\infty
}\left( f_{n},f\right) \rightarrow 0$, then $f$ is also continuous.
\end{proposition}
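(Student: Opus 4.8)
The plan is to carry out the classical $\epsilon/3$ argument for the uniform limit theorem, while keeping careful track of the multiplicative constants that the relaxed triangle inequality introduces. By the definition of continuity for maps into $\left(X,J\right)$, it suffices to fix an arbitrary point $x\in K$ and an arbitrary sequence $x_{n}\to x$ in $K$, and to show that $J\left(f\left(x_{n}\right),f\left(x\right)\right)\to 0$.

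Given $\epsilon>0$, I would first use the hypothesis $J_{\infty}\left(f_{m},f\right)\to 0$ to fix a single index $m$ with
\begin{equation*}
J_{\infty}\left(f_{m},f\right)<\frac{\epsilon}{3\,\sigma\left(J\right)^{2}},
\end{equation*}
so that, uniformly in $n$, both $J\left(f\left(x_{n}\right),f_{m}\left(x_{n}\right)\right)$ and $J\left(f_{m}\left(x\right),f\left(x\right)\right)$ are at most $\epsilon/\bigl(3\,\sigma\left(J\right)^{2}\bigr)$. Next, with this $m$ now fixed, I would split the distance through the two intermediate points $f_{m}\left(x_{n}\right)$ and $f_{m}\left(x\right)$. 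Applying the relaxed triangle inequality twice --- equivalently, using Lemma \ref{sigma_n} in the case $n=3$, which gives $\sigma_{3}\left(J\right)\leq\sigma\left(J\right)^{2}$ --- we obtain
\begin{equation*}
J\left(f\left(x_{n}\right),f\left(x\right)\right)\leq\sigma\left(J\right)^{2}\Bigl[J\left(f\left(x_{n}\right),f_{m}\left(x_{n}\right)\right)+J\left(f_{m}\left(x_{n}\right),f_{m}\left(x\right)\right)+J\left(f_{m}\left(x\right),f\left(x\right)\right)\Bigr].
\end{equation*}
The first and third bracketed terms are already under control by the choice of $m$. For the middle term I would invoke the continuity of the single map $f_{m}$ at $x$: since $x_{n}\to x$ in $K$, we have $J\left(f_{m}\left(x_{n}\right),f_{m}\left(x\right)\right)\to 0$, so there is an $N$ such that this term is below $\epsilon/\bigl(3\,\sigma\left(J\right)^{2}\bigr)$ for all $n\geq N$. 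Summing the three estimates yields $J\left(f\left(x_{n}\right),f\left(x\right)\right)<\epsilon$ for all $n\geq N$, as required.

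There is no essential difficulty in this argument; the only point requiring care is that each use of the relaxed triangle inequality costs a factor $\sigma\left(J\right)$, which is why the three-term split must be absorbed through $\sigma_{3}\left(J\right)\leq\sigma\left(J\right)^{2}$ and why the tolerance is chosen as $\epsilon/\bigl(3\,\sigma\left(J\right)^{2}\bigr)$ rather than $\epsilon/3$. It is worth noting that the hypothesis that $J$ be continuous is not used here: only the relaxed triangle inequality and the continuity of each $f_{m}$ enter, so the conclusion holds for an arbitrary quasimetric $J$ on $X$.
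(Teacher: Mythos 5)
Your proof is correct and follows essentially the same $\epsilon/3$ argument as the paper: the same three-term split through the approximating map, the same use of Lemma \ref{sigma_n} to absorb the cost $\sigma\left(J\right)^{2}$, with only cosmetic differences (you phrase continuity sequentially and place the factor $\sigma\left(J\right)^{2}$ in the tolerance, whereas the paper works with a $\delta$ at each point and accepts the final bound $\epsilon\,\sigma\left(J\right)^{2}$). Your closing observation is also consistent with the paper, whose proof likewise uses only the relaxed triangle inequality and the continuity of each $f_{n}$, not the continuity of $J$.
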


\begin{proof}
Since $J_{\infty }\left( f_{n},f\right) \rightarrow 0$, for any $\epsilon >0$%
, there exists an $n$ such that
\begin{equation}
\sup_{x\in K}J\left( f_{n}\left( x\right) ,f\left( x\right) \right) \leq
\epsilon /3\text{ }  \label{J_EPSILON_3}
\end{equation}%
For any $x\in K$, since $f_{n}$ is continuous at $x$, there exists a $\delta
=\delta \left( x\right) >0$ such that $J\left( f_{n}\left( x\right)
,f_{n}\left( y\right) \right) \leq \epsilon /3$ whenever $y\in K$ with $%
d_{K}\left( x,y\right) $ $\leq \delta $. Therefore, by lemma \ref{sigma_n}
and (\ref{J_EPSILON_3}), we have
\begin{eqnarray*}
J\left( f\left( x\right) ,f\left( y\right) \right) &\leq &\sigma \left(
J\right) ^{2}\left[ J\left( f\left( x\right) ,f_{n}\left( x\right) \right)
+J\left( f_{n}\left( x\right) ,f_{n}\left( y\right) \right) +J\left(
f_{n}\left( y\right) ,f\left( y\right) \right) \right] \\
&\leq &\epsilon \sigma \left( J\right) ^{2}
\end{eqnarray*}%
and thus $f$ is continuous at every $x\in K$.
\end{proof}

\begin{theorem}
\label{complete_quasimetric}\bigskip Suppose $\left( X,J\right) $ is
a
complete quasimetric space and $J$ is lower semicontinuous. Then, the space $%
\left( C\left( K,\left( X,J\right) \right) ,J_{\infty }\right) $ is
also a complete quasimetric space.
\end{theorem}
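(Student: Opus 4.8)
The plan is to adapt the classical proof that $C(K,Y)$ is complete whenever $Y$ is complete, keeping track of the quasimetric constant $\sigma(J)$ and isolating the one place where lower semicontinuity is essential. By Proposition \ref{J_infty_quasimetric} we already know that $\left(C\left(K,\left(X,J\right)\right),J_{\infty}\right)$ is a quasimetric space, so the only thing to prove is completeness: starting from a Cauchy sequence $\left\{f_{n}\right\}$ in this space, we must exhibit a continuous limit map $f$.

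First I would construct the candidate limit pointwise. For each fixed $x\in K$ we have $J\left(f_{n}\left(x\right),f_{m}\left(x\right)\right)\leq J_{\infty}\left(f_{n},f_{m}\right)$, so $\left\{f_{n}\left(x\right)\right\}$ is a Cauchy sequence in $\left(X,J\right)$; by completeness it converges to a point, which is unique because the relaxed triangle inequality together with the identity of indiscernibles forces limits in $\left(X,J\right)$ to be unique. Call this point $f\left(x\right)$. This defines a map $f:K\rightarrow X$, and it remains to show that $J_{\infty}\left(f_{n},f\right)\rightarrow 0$ and that $f$ is continuous.

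The uniform convergence step is the heart of the argument and the only point that is not entirely routine. Given $\epsilon>0$, choose $N$ so that $\sup_{x\in K}J\left(f_{n}\left(x\right),f_{m}\left(x\right)\right)\leq\epsilon$ for all $n,m\geq N$. Fix $n\geq N$ and $x\in K$. Apply the lower semicontinuity condition (\ref{lower_semicontinuity}) to the constant sequence $f_{n}\left(x\right)$ (which $J$-converges to $f_{n}\left(x\right)$) and the sequence $f_{m}\left(x\right)$ (which $J$-converges to $f\left(x\right)$ by construction), obtaining $J\left(f_{n}\left(x\right),f\left(x\right)\right)\leq\liminf_{m}J\left(f_{n}\left(x\right),f_{m}\left(x\right)\right)\leq\epsilon$. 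Taking the supremum over $x\in K$ gives $J_{\infty}\left(f_{n},f\right)\leq\epsilon$ for every $n\geq N$, hence $J_{\infty}\left(f_{n},f\right)\rightarrow 0$.

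Finally, the continuity of $f$ is automatic: $f$ is the $J_{\infty}$-limit of the continuous maps $f_{n}$, so Proposition \ref{limit_of_uniform_continuous} gives $f\in C\left(K,\left(X,J\right)\right)$. Thus the arbitrary Cauchy sequence $\left\{f_{n}\right\}$ converges within the space, and completeness follows. I expect no serious obstacle; the only subtleties are to verify that uniqueness of limits in $\left(X,J\right)$ makes $f$ well defined, and to notice that only lower semicontinuity of $J$ — not full continuity — is needed, since that is exactly what the one-sided estimate in the uniform convergence step requires.
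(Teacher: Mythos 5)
Your proposal is correct and follows essentially the same route as the paper: pointwise limits from completeness of $\left(X,J\right)$, lower semicontinuity to get the one-sided estimate $J\left(f_{n}\left(x\right),f\left(x\right)\right)\leq\liminf_{m}J\left(f_{n}\left(x\right),f_{m}\left(x\right)\right)\leq\epsilon$ uniformly in $x$, and Proposition \ref{limit_of_uniform_continuous} for continuity of the limit. Your extra remarks (uniqueness of limits making $f$ well defined, and the observation that only lower semicontinuity is needed) are minor refinements of the same argument.
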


\begin{proof}
Let $\left\{ f_{n}\right\} $ be any Cauchy sequence in $C\left( K,\left(
X,J\right) \right) $ with respect to $J_{\infty }$. That is, for any $%
\epsilon >0$, there exists an $N$ such that whenever $m,n\geq N$, we have $%
J_{\infty }\left( f_{n},f_{m}\right) \leq \epsilon $. So, for each $x\in K$,
$\left\{ f_{n}\left( x\right) \right\} $ is Cauchy in $X$. Since $X$ is
complete, $\left\{ f_{n}\left( x\right) \right\} $ converges to some $%
f\left( x\right) \in X$ with respect to $J$. Now,
\begin{eqnarray*}
J_{\infty }\left( f_{n},f\right) &=&\sup_{x\in K}J\left( f_{n}\left(
x\right) ,f\left( x\right) \right) \\
&\leq &\sup_{x\in K}\lim_{m\rightarrow \infty }J\left( f_{n}\left( x\right)
,f_{m}\left( x\right) \right) \text{, because }J\text{ is lower
semicontinuous} \\
&\leq &\limsup_{m\rightarrow \infty }\left[ \sup_{x\in K}J\left( f_{n}\left(
x\right) ,f_{m}\left( x\right) \right) \right] \leq \epsilon
\end{eqnarray*}%
So, $J_{\infty }\left( f_{n},f\right) \rightarrow 0$. By proposition \ref%
{limit_of_uniform_continuous}, $f$ is continuous. Hence, by proposition \ref%
{J_infty_quasimetric}, $J_{\infty }$ is a complete quasimetric on
$C\left( K,\left( X,J\right) \right) $.
\end{proof}

\begin{definition}
A subset $\mathcal{F}$ of $C\left( K,\left( X,J\right) \right)$ is
equicontinuous if for every $x\in K$ and $\epsilon >0$, there is a $\delta
=\delta \left( x,\epsilon \right) >0$, such that whenever $y\in K$ with $%
d_{K}\left( x,y\right) \leq \delta $, we have $J\left( f\left( x\right)
,f\left( y\right) \right) \leq \epsilon $ for all $f\in \mathcal{F}$.
\end{definition}

Now, we have the following Ascoli-Arzel\`{a} theorem in quasimetric
spaces:

\begin{theorem}
Suppose $\left( X,J\right) $ is a complete quasimetric space and $J$
is lower semicontinuous. A subset $\mathcal{F}$ of $\left( C\left(
K,\left( X,J\right) \right) ,J_{\infty }\right) $ is precompact if
and only if it is bounded and equicontinuous.
\end{theorem}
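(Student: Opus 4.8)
The plan is to transcribe the two implications of the classical Arzel\`{a}--Ascoli theorem while tracking the constants $\sigma:=\sigma(J)$ and $\sigma_n(J)\le\sigma(J)^{n-1}$ (Lemma \ref{sigma_n}) that replace the usual factor $2$ in the triangle inequality, and to argue sequentially throughout: since the topology in play is the sequential one and $(C(K,(X,J)),J_\infty)$ is complete by Theorem \ref{complete_quasimetric}, ``$\mathcal F$ is precompact'' is read as ``every sequence in $\mathcal F$ has a $J_\infty$-convergent subsequence.'' I will also use that $\sigma(J_\infty)\le\sigma(J)$ (implicit in the proof of Proposition \ref{J_infty_quasimetric}) and that a compact metric space $K$ is separable.

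\textit{Necessity.} If $\mathcal F$ is precompact its closure is (sequentially) compact, hence totally bounded, and chaining the relaxed triangle inequality over a finite $\epsilon$-net via Lemma \ref{sigma_n} shows $\mathcal F$ is $J_\infty$-bounded. For equicontinuity at $x\in K$ and $\epsilon>0$: cover $\mathcal F$ by finitely many $J_\infty$-balls of radius $\epsilon/(3\sigma^{2})$ about continuous maps $f_1,\dots,f_m$; pick $\delta>0$ with $J(f_i(x),f_i(y))\le\epsilon/(3\sigma^{2})$ whenever $d_K(x,y)\le\delta$ for all $i$; then for any $f\in\mathcal F$, choosing $f_i$ with $J_\infty(f,f_i)\le\epsilon/(3\sigma^{2})$ and using $\sigma_3(J)\le\sigma^{2}$ gives $J(f(x),f(y))\le\sigma^{2}\big(J(f(x),f_i(x))+J(f_i(x),f_i(y))+J(f_i(y),f(y))\big)\le\epsilon$ for all such $y$.

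\textit{Sufficiency.} Let $\{x_k\}$ be a countable dense subset of $K$ and $\{f_n\}\subset\mathcal F$. Boundedness --- understood so that for each $x\in K$ the evaluation set $\{f(x):f\in\mathcal F\}$ has compact closure in $X$, which is the property actually needed over a general quasimetric target --- makes $\{f_n(x_k)\}_n$ relatively compact for every $k$, so a diagonal extraction yields a subsequence $\{g_n\}$ with $\{g_n(x_k)\}_n$ convergent in $X$ for each $k$. One then checks $\{g_n\}$ is $J_\infty$-Cauchy: given $\epsilon>0$, equicontinuity together with compactness of $K$ produces finitely many dense points $x_{k_1},\dots,x_{k_p}$ such that every $x\in K$ has $J(f(x),f(x_{k_j}))$ small, uniformly in $f\in\mathcal F$, for a suitable $j$; since $\{g_n(x_{k_j})\}_n$ is Cauchy for each of these finitely many indices, the relaxed triangle inequality along the chain $g_m(x),\,g_m(x_{k_j}),\,g_n(x_{k_j}),\,g_n(x)$ --- a fixed finite number of steps, hence only a fixed power of $\sigma$ by Lemma \ref{sigma_n} --- bounds $J(g_m(x),g_n(x))$ uniformly in $x$ for $m,n$ large. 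Thus $J_\infty(g_m,g_n)\to0$, and Theorem \ref{complete_quasimetric} (where lower semicontinuity of $J$ enters) gives a limit $g\in C(K,(X,J))$ with $J_\infty(g_n,g)\to0$, so $\mathcal F$ is precompact.

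\textit{Main obstacle.} The real work is in the sufficiency direction, where two points need care: (i) the bookkeeping of $\sigma$-factors in the equicontinuity-plus-compactness estimate, which is exactly where Lemma \ref{sigma_n} must stand in for the iterated metric triangle inequality; and (ii) the meaning of ``bounded'' --- in a general quasimetric target, $J_\infty$-boundedness of $\mathcal F$ need not make the evaluation sets relatively compact in $X$, so this hypothesis must be understood as supplying that pointwise relative compactness (automatic, e.g., whenever closed bounded subsets of $X$ are compact, as in the intended applications). Once these are settled, the remainder is a careful but routine transcription of the classical proof.
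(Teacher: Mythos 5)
Your argument is, in both directions, essentially the paper's own proof: the necessity part is the same finite $\epsilon$-net obtained by the same contradiction argument, followed by the same three-term chain controlled by $\sigma_{3}(J)\leq \sigma(J)^{2}$ from Lemma \ref{sigma_n}; the sufficiency part is the same diagonal extraction along a countable dense subset of $K$, the same selection (by equicontinuity plus compactness of $K$) of finitely many reference points, the same chain $g_{m}(x),\,g_{m}(x_{k_{j}}),\,g_{n}(x_{k_{j}}),\,g_{n}(x)$ giving a fixed factor $\sigma(J)^{2}$, and the same appeal to Theorem \ref{complete_quasimetric}, which is exactly where lower semicontinuity of $J$ enters for the paper as well. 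The one genuine divergence is your treatment of ``bounded'': the paper reads it as $J_{\infty}$-boundedness and simply asserts that ``by the diagonal process we may assume $\{f_{n}\}$ converges on a countable dense subset,'' whereas you observe, correctly, that this extraction requires the evaluation sets $\{f(x):f\in\mathcal{F}\}$ to be relatively (sequentially) compact in $X$, which $J_{\infty}$-boundedness alone does not provide in a general complete quasimetric space --- for instance, the constant maps sending $K$ to the elements of an orthonormal basis of $\ell^{2}$ form a bounded, equicontinuous family with no uniformly convergent subsequence. So your sufficiency direction proves the statement under a strengthened (pointwise precompactness) reading of the hypothesis, and the step you flag is precisely the point at which the paper's written proof is silent; it goes through as written only when bounded subsets of $X$ have compact closure. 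Apart from this hypothesis bookkeeping, there is no difference of method: your $\sigma$-factor accounting coincides with the paper's use of Lemma \ref{sigma_n}, and your passage to the limit via completeness is the paper's.
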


\begin{proof}
Suppose $\mathcal{F}$ is a precompact (i.e. every sequence has a convergent
subsequence) subset of $C\left( K,\left( X,J\right) \right) $. Then, for
each fixed $\epsilon >0$ , there exists a finite subset $\left\{
f_{1},\cdots ,f_{k}\right\} $ of $\mathcal{F}$ such that
\begin{equation}
\mathcal{F}\subset \bigcup\limits_{i=1}^{k}B_{\epsilon /3}\left(
f_{i}\right) ,  \label{finite_cover}
\end{equation}%
where the notation $B_{\epsilon }\left( g\right) =\left\{ h\in C\left(
K,\left( X,J\right) \right) |J_{\infty }\left( g,h\right) <\epsilon \right\}
$. Otherwise, for any finite subset $\left\{ f_{1},\cdots ,f_{k}\right\} $,
there exists an $f_{k+1}\notin \bigcup\limits_{i=1}^{k}B_{\epsilon /3}\left(
f_{i}\right) $, and thus we get a sequence $\left\{ f_{k}\right\} $ in $%
\mathcal{F}$. Since $J_{\infty }\left( f_{m},f_{n}\right) \geq \epsilon /3$
for any $m\neq n$, we know $\left\{ f_{n}\right\} $ does not contain any
Cauchy subsequence, which contradicts to $\mathcal{F}$ being precompact.
Therefore, (\ref{finite_cover}) must be true, which also implies that $%
\mathcal{F}$ is bounded.

Now, for any $x\in K$ and each $f_{i}$ in (\ref{finite_cover}), there exists
a $\delta _{i}>0$ such that whenever $y\in K$ with $d_{K}\left( x,y\right)
<\delta _{i}$, we have $J\left( f_{i}\left( x\right) ,f_{i}\left( y\right)
\right) \leq \frac{\epsilon }{3}$. \ For every $f\in \mathcal{F}$, by (\ref%
{finite_cover}), there is an $1\leq i\leq k$ such that $J_{\infty }\left(
f,f_{i}\right) \leq \frac{\epsilon }{3}$. We conclude that for any $y\in K$
with $d_{K}\left( x,y\right) <\delta =\min \left\{ \delta _{1},\cdots
,\delta _{k}\right\} $, we have
\begin{eqnarray*}
J\left( f\left( x\right) ,f\left( y\right) \right) &\leq &\sigma \left(
J\right) ^{2}\left[ J\left( f\left( x\right) ,f_{i}\left( x\right) \right)
+J\left( f_{i}\left( x\right) ,f_{i}\left( y\right) \right) +J\left(
f_{i}\left( y\right) ,f\left( y\right) \right) \right] \\
&\leq &\epsilon \sigma \left( J\right) ^{2}\text{. }
\end{eqnarray*}%
Therefore, $\mathcal{F}$ is equicontinuous at every $x\in K$.

On the other hand, suppose $\mathcal{F}$ is equicontinuous and bounded.
Then, for any sequence $\left\{ f_{n}\right\} $ in $\mathcal{F}$, by using
the diagonal process and taking subsequence if necessary, we may assume $%
\left\{ f_{n}\right\} $ is convergent to $f$ on a countable dense subset $S$
in $K$. We now prove that $\left\{ f_{n}\right\} $ is Cauchy\ in $C\left(
K,\left( X,J\right) \right) \,\ $with respect to $J_{\infty }$. Indeed, for
any $\epsilon >0$, since $\mathcal{F}$ is equicontinuous and $K$ is compact,
there exists a finite many points $\left\{ r_{1},\cdots ,r_{k}\right\} $ in $%
S$ such that for any $x\in K$, there is a $r_{i}$, such that
\begin{equation*}
J\left( f_{n}\left( x\right) ,f_{n}\left( r_{i}\right) \right) \leq \frac{%
\epsilon }{3}
\end{equation*}%
for all $n$. Now, whenever $m,n$ are large enough, for all $x\in K$,
\begin{eqnarray*}
&&J\left( f_{n}\left( x\right) ,f_{m}\left( x\right) \right) \\
&\leq &\sigma \left( J\right) ^{2}\left[ J\left( f_{n}\left( x\right)
,f_{n}\left( r_{i}\right) \right) +J\left( f_{n}\left( r_{i}\right)
,f_{m}\left( r_{i}\right) \right) +J\left( f_{m}\left( r_{i}\right)
,f_{m}\left( x\right) \right) \right] \\
&\leq &\sigma \left( J\right) ^{2}\epsilon .
\end{eqnarray*}%
Therefore, $\left\{ f_{n}\right\} $ is a Cauchy sequence in $C\left(
K,\left( X,J\right) \right) $. By the completeness of $C\left(
K,\left( X,J\right) \right) $ stated in theorem
\ref{complete_quasimetric}, the
sequence $\left\{ f_{n}\right\} $ is convergent with respect to $J_{\infty }$%
. Thus, $\mathcal{F}$ is precompact.
\end{proof}

\begin{corollary}
\label{Ascoli}Suppose $\left( X,J\right) $ is a complete quasimetric
space and $J$ is lower semicontinuous. A subset $\mathcal{F}$ of
$C\left( K,\left( X,J\right) \right) $ is sequentially compact with
respect to $J_{\infty }$ if and only if it is closed, bounded and
equicontinuous.
\end{corollary}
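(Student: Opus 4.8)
The plan is to obtain this as an immediate consequence of the Ascoli--Arzel\`a theorem just proved, by identifying ``sequentially compact'' with ``precompact and closed'' in the quasimetric space $\left( C\left( K,\left( X,J\right) \right) ,J_{\infty }\right) $. The only genuinely new ingredient needed is that limits of convergent sequences are unique in a quasimetric space, which then lets us pass between the two notions of compactness.

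First I would record the uniqueness of limits. In any quasimetric space $\left( Y,\rho \right) $, if $y_{n}\overset{\rho }{\rightarrow }y$ and $y_{n}\overset{\rho }{\rightarrow }y'$, then
\begin{equation*}
\rho \left( y,y'\right) \leq \sigma \left( \rho \right) \left( \rho \left( y,y_{n}\right) +\rho \left( y_{n},y'\right) \right) \rightarrow 0,
\end{equation*}
so $\rho \left( y,y'\right) =0$ and hence $y=y'$ by condition (\ref{condition_2}). By Proposition \ref{J_infty_quasimetric}, $\left( C\left( K,\left( X,J\right) \right) ,J_{\infty }\right) $ is itself a quasimetric space, so $J_{\infty }$-limits of sequences of continuous maps are unique; moreover, closedness of $\mathcal{F}$ in the topology induced by $J_{\infty }$ is exactly the statement that whenever $f_{n}\in \mathcal{F}$ and $J_{\infty }\left( f_{n},f\right) \rightarrow 0$, then $f\in \mathcal{F}$.

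Next I would show $\mathcal{F}$ is sequentially compact (with respect to $J_{\infty }$) if and only if it is closed and precompact, where ``precompact'' is used in the same sense as in the proof of the preceding theorem, namely that every sequence in $\mathcal{F}$ has a subsequence converging in $C\left( K,\left( X,J\right) \right) $. If $\mathcal{F}$ is sequentially compact, it is trivially precompact; and it is closed, since if $f_{n}\in \mathcal{F}$ with $J_{\infty }\left( f_{n},f\right) \rightarrow 0$, sequential compactness gives a subsequence $f_{n_{k}}$ converging to some $g\in \mathcal{F}$, and then $f=g$ by uniqueness of limits (a subsequence of a convergent sequence has the same limit), so $f\in \mathcal{F}$. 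Conversely, if $\mathcal{F}$ is closed and precompact, then any sequence in $\mathcal{F}$ has a subsequence converging to some $f\in C\left( K,\left( X,J\right) \right) $, and $f\in \mathcal{F}$ by closedness, so $\mathcal{F}$ is sequentially compact.

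Finally I would invoke the Ascoli--Arzel\`a theorem: under the standing hypotheses that $\left( X,J\right) $ is complete and $J$ lower semicontinuous, $\mathcal{F}$ is precompact if and only if it is bounded and equicontinuous. Combining this with the equivalence of the previous paragraph yields: $\mathcal{F}$ is sequentially compact $\iff$ $\mathcal{F}$ is closed and precompact $\iff$ $\mathcal{F}$ is closed, bounded, and equicontinuous, which is the desired statement. I do not expect any real obstacle here; the only point requiring a little care is making sure that the notion of closedness with respect to $J_{\infty }$ is consistent with the sequential convergence used throughout, which is handled by the uniqueness-of-limits observation above.
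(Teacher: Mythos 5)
Your proposal is correct and is exactly the standard bridging argument the paper leaves implicit when it states this as a corollary of its Ascoli--Arzel\`a theorem: sequential compactness is identified with precompactness plus closedness, using uniqueness of $J_{\infty}$-limits (which follows from the relaxed triangle inequality together with Proposition \ref{J_infty_quasimetric}) and the paper's sequential definition of closed sets. Nothing further is needed.
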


\section{Intrinsic Metrics induced by quasimetrics}

This section is devoted to study the geodesic problem in a quasimetric space $%
\left( X,J\right) $. Let $\left[ a,b\right] $ be a bounded closed interval.

\begin{definition}
Let $N$ be a natural number. A curve $f\in C\left( \left[ a,b\right] ,\left(
X,J\right) \right) $ is called an $N$-piecewise metric Lipschitz curve in $%
\left( X,J\right) $ if there exists a partition
\begin{equation*}
P_{f}=\left\{ a=a_{0}<a_{1}<\cdots <a_{N}=b\right\}
\end{equation*}%
of $\left[ a,b\right] $ such that for each $i=0,1,\cdots ,N-1$,

\begin{enumerate}
\item $J$ is a metric on the subset $f\left( \left[ a_{i},a_{i+1}\right]
\right) $ of $X$ and

\item the restriction of $f$ on $\left[ a_{i},a_{i+1}\right] $ is Lipschitz.
\end{enumerate}
\end{definition}

Here, requiring $J$ to be a metric on $f\left( \left[ a_{i},a_{i+1}\right]
\right) $ is the same as asking it to satisfy the triangle inequality: $%
J(f(t_{1}),f(t_{2}))\leq J(f(t_{1}),f(t_{2}))+J(f(t_{2}),f(t_{3}))$ for any $%
t_{1},t_{2},t_{3}\in \lbrack a_{i},a_{i+1}]$. Let
\begin{equation*}
\mathcal{P}_{N}\left( \left[ a,b\right] \text{,}\left( X,J\right) \right)
\end{equation*}%
be the family of all $N-$piecewise metric Lipschitz curves in $\left(
X,J\right) $, and $\mathcal{P}\left( \left[ a,b\right] \text{,}\left(
X,J\right) \right) $ be the union of $\mathcal{P}_{N}\left( \left[ a,b\right]
\text{,}\left( X,J\right) \right) $ over all $N$'s.

\subsection{Length of rectifiable curves}

\bigskip Recall that when $\left( X,d\right) $ is a metric space, and $f:%
\left[ a,b\right] \rightarrow \left( X,d\right) $ is a (continuous) curve.
Then, one may define its length as
\begin{equation*}
L\left( f\right) =\sup_{P}V_{P}\left( f\right) \in \left[ 0,+\infty \right] ,
\end{equation*}%
where the supremum is over all partitions $P$ of $\left[ a,b\right] $, and $%
V_{P}\left( f\right) $ is the variation of $f$ over the partition $P=\left\{
a=t_{0}<t_{1}<\cdots <t_{N}=b\right\} $ given by
\begin{equation*}
V_{P}\left( f\right) =\sum_{i=1}^{N}d\left( f\left( t_{i-1}\right) ,f\left(
t_{i}\right) \right) \text{.}
\end{equation*}%
In case $f$ is Lipschitz, an equivalent formula for the length of $f$ is

\begin{equation*}
L\left( f\right) =\int_{a}^{b}\left| \dot{f}\left( t\right) \right| _{d}dt,
\end{equation*}%
where $\left| \dot{f}\left( t\right) \right| _{d}$ is the metric derivative
of $f$ at $f\left( t\right) $ defined by
\begin{equation*}
\left| \dot{f}\left( t\right) \right| _{d}:=\lim_{s\rightarrow t}\frac{%
d\left( f\left( s\right) ,f\left( t\right) \right) }{\left| s-t\right| },
\end{equation*}%
provided the limit exists. When $f$ is Lipschitz, $\left| \dot{f}\left(
t\right) \right| _{d}$ exists almost everywhere, and is bounded and
measurable in $t$.

Now, suppose $\left( X,J\right) $ is a quasimetric space, and $f\in \mathcal{P%
}_{N}\left( \left[ a,b\right] \text{,}\left( X,J\right) \right) $. Then on
each interval $\left[ a_{i},a_{i+1}\right] $, $f:\left[ a_{i},a_{i+1}\right]
\rightarrow \left( X,J\right) $ is a Lipschitz curve in the metric space $%
\left( f\left( \left[ a_{i},a_{i+1}\right] \right) ,J\right) $, and thus the
length of the restriction of $f$ on $\left[ a_{i},a_{i+1}\right] $ is well
defined. As a result, we may define the length of $f$ to be
\begin{equation*}
L\left( f\right) :=\sum_{i=0}^{N-1}L\left( f\lfloor _{\left[ a_{i},a_{i+1}%
\right] }\right) .
\end{equation*}%
In other words, we have

\begin{definition}
For any $f\in \mathcal{P}_{N}\left( \left[ a,b\right] \text{,}\left(
X,J\right) \right) $, the length of $f$ is defined as
\begin{equation*}
L_{J}\left( f\right) :=\int_{a}^{b}\left| \dot{f}\left( t\right) \right|
_{J}dt,
\end{equation*}%
where the metric derivative
\begin{equation*}
\left| \dot{f}\left( t\right) \right| _{J}:=\lim_{s\rightarrow t}\frac{%
J\left( f\left( s\right) ,f\left( t\right) \right) }{\left| s-t\right| }
\end{equation*}%
provided the limit exists. We may simply write $L_{J}\left( f\right) $ as $%
L\left( f\right) $ if $J$ is obvious.
\end{definition}

\begin{lemma}
\label{sequentially_compact_family}Suppose $J$ is a continuous
quasimetric on $X$, $C>0$ is a constant, and $P=\left\{
a=a_{0}<a_{1}<\cdots <a_{N}=b\right\} $ is a partition of the
interval $\left[ a,b\right] $. Then, for any $x,y \in X$, the family
\begin{equation*}
\mathcal{F}=\left\{
\begin{array}{c}
f\in C\left( \left[ a,b\right] ,\left( X,J\right) \right) :\text{ }f\left(
a\right) =x,f\left( b\right) =y, \text{ and } J \text{ is a metric on } \\
f\left( \left[ a_{i},a_{i+1}\right] \right) \text{ and }Lip\left( f\lfloor _{%
\left[ a_{i},a_{i+1}\right] }\right) \leq C, \text{ for each } i=0,\cdots
,N-1%
\end{array}%
\right\}
\end{equation*}%
is a bounded, closed and equicontinuous subset of $C\left( \left[ a,b\right]
,\left( X,J\right) \right) $. Moreover, if $f_{n}$ is uniformly convergent
to $f$ in $J_{\infty }$, then,
\begin{equation*}
L\left( f\right) \leq \liminf_{n}L\left( f_{n}\right) .
\end{equation*}
\end{lemma}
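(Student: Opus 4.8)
The plan is to verify the three topological properties ($\mathcal{F}$ bounded, closed, equicontinuous) and then establish the lower semicontinuity of length separately. For \emph{boundedness}, I would note that on each subinterval $[a_i,a_{i+1}]$ the Lipschitz bound gives $J(f(t),f(a_i))\le C|t-a_i|\le C(a_{i+1}-a_i)$, and then chaining through the $N$ subintervals with Lemma \ref{sigma_n} yields a uniform bound on $J(f(t),x)$, hence a uniform bound on $J_\infty(f,g)$ for $f,g\in\mathcal{F}$. For \emph{equicontinuity}, I would split into two cases at a point $x=t$: if $s,t$ lie in a common subinterval $[a_i,a_{i+1}]$, then $J(f(s),f(t))\le C|s-t|$ directly; if they straddle one or more partition points, one inserts the intermediate partition points $a_j$ and uses Lemma \ref{sigma_n} together with $J(f(a_j),f(a_{j+1}))\le C(a_{j+1}-a_j)$ for the interior pieces and $\le C\cdot(\text{small})$ for the two boundary pieces, so that for $|s-t|$ small (less than the minimal partition gap, forcing at most one partition point between $s$ and $t$) the quantity $J(f(s),f(t))$ is controlled by a constant times $|s-t|$ uniformly in $f$.

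For \emph{closedness}, suppose $f_n\in\mathcal{F}$ with $J_\infty(f_n,f)\to 0$; by Proposition \ref{limit_of_uniform_continuous}, $f$ is continuous, and clearly $f(a)=x$, $f(b)=y$. I must check $f$ inherits the metric property on $f([a_i,a_{i+1}])$ and the Lipschitz bound $C$ there. The Lipschitz bound follows from pointwise convergence: $J(f(s),f(t))=\lim_n J(f_n(s),f_n(t))\le C|s-t|$ using the continuity of $J$ (condition \eqref{J_continuity}). The metric/triangle-inequality property on $f([a_i,a_{i+1}])$ likewise passes to the limit: for $t_1,t_2,t_3\in[a_i,a_{i+1}]$, the inequality $J(f_n(t_1),f_n(t_3))\le J(f_n(t_1),f_n(t_2))+J(f_n(t_2),f_n(t_3))$ holds for each $n$, and passing to the limit via continuity of $J$ gives the same inequality for $f$. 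Hence $f\in\mathcal{F}$.

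Finally, for the lower semicontinuity of length under uniform convergence, I would reduce to the metric-space case subinterval by subinterval. On $[a_i,a_{i+1}]$, both $f_n$ and $f$ take values in the metric space $(f([a_i,a_{i+1}])\cup f_n([a_i,a_{i+1}]),J)$—or more carefully, I work with the length as a supremum of variation sums $V_P$ over partitions $P$ of $[a_i,a_{i+1}]$. For a fixed partition $P=\{t_0<\cdots<t_m\}$, the variation sum $\sum_j J(f(t_{j-1}),f(t_j))=\lim_n\sum_j J(f_n(t_{j-1}),f_n(t_j))\le\liminf_n L(f_n\lfloor_{[a_i,a_{i+1}]})$ by pointwise convergence and continuity of $J$, using the triangle inequality on that subinterval to identify the variation sum as a lower bound for the metric length. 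Taking the supremum over $P$ gives $L(f\lfloor_{[a_i,a_{i+1}]})\le\liminf_n L(f_n\lfloor_{[a_i,a_{i+1}]})$, and then summing over $i=0,\dots,N-1$ and using superadditivity of $\liminf$ yields $L(f)\le\liminf_n L(f_n)$. The main obstacle I anticipate is the equicontinuity argument across partition points: one has to be careful that the $\sigma(J)$-powers from chaining do not blow up, which is handled by the observation that for $|s-t|$ below the minimal partition gap at most one partition point lies between $s$ and $t$, so only a bounded (independent of $f$) number of applications of the relaxed triangle inequality is needed. A secondary subtlety is that the metric derivative $|\dot f|_J$ need not converge pointwise even when $f_n\to f$ uniformly, which is exactly why the proof must go through the variation-sum characterization of length rather than through the integral formula directly.
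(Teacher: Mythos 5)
Your proposal is correct and follows essentially the same route as the paper: boundedness by chaining through the partition points with Lemma \ref{sigma_n}, closedness and equicontinuity by passing the triangle inequality and the Lipschitz bound $C$ to the limit via continuity of $J$, and lower semicontinuity of $L$ via variation sums $V_Q$ on each subinterval, summed over $i$. Your two refinements (handling points straddling a partition point, and invoking superadditivity of $\liminf$ where the paper writes an equality) are sound but do not change the argument.
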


\begin{proof}
For any $g\in \mathcal{F}$ and any $t\in \left[ a,b\right] $, we have $t\in %
\left[ a_{j},a_{j+1}\right] $ for some $j\leq N-1$ and
\begin{eqnarray*}
J\left( g\left( t\right) ,x\right) &=&J\left( g\left( t\right) ,g\left(
a\right) \right) \\
&=&\sigma \left( J\right) ^{j}\left( \sum_{i=0}^{j-1}J\left( g\left(
a_{i}\right) ,g\left( a_{i+1}\right) \right) +J\left( g\left( a_{j}\right)
,g\left( t\right) \right) \right) \\
&\leq &\sigma \left( J\right) ^{j}C\left| t-a\right| \leq C\sigma \left(
J\right) ^{N-1}\left| b-a\right|
\end{eqnarray*}%
Therefore, $\mathcal{F}$ is bounded.

Suppose $\left\{ f_{n}\right\} $ is any convergent sequence in $\mathcal{F}$
with respect to $J_{\infty }$ with $f\in C\left( \left[ a,b\right] ,\left(
X,J\right) \right) $ being the limit. Then, for each fixed $i$, and any $%
t_{1},t_{2},t_{3}\in \left[ a_{i},a_{i+1}\right] $, we have
\begin{equation*}
J\left( f_{n}\left( t_{1}\right) ,f_{n}\left( t_{2}\right) \right) \leq
J\left( f_{n}\left( t_{1}\right) ,f_{n}\left( t_{3}\right) \right) +J\left(
f_{n}\left( t_{3}\right) ,f_{n}\left( t_{2}\right) \right)
\end{equation*}%
and
\begin{equation*}
J\left( f_{n}\left( t_{1}\right) ,f_{n}\left( t_{2}\right) \right) \leq
C\left| t_{1}-t_{2}\right| .
\end{equation*}%
Let $n\rightarrow \infty $, we have $J$ is a metric on $f\left( \left[
a_{i},a_{i+1}\right] \right) $ and $Lip\left( f\lfloor \left[ a_{i},a_{i+1}%
\right] \right) \leq C$. Therefore, $f\in \mathcal{F}$. This shows that $%
\mathcal{F}$ is closed and also equicontinuous. Moreover, for any partition $%
Q$ of $\left[ a_{i},a_{i+1}\right] $, the variation
\begin{equation*}
V_{Q}\left( f\lfloor \left[ a_{i},a_{i+1}\right] \right)
=\lim_{n}V_{Q}\left( \left( f_{n}\right) \lfloor \left[ a_{i},a_{i+1}\right]
\right) \leq \liminf_{n}L\left( \left( f_{n}\right) \lfloor \left[
a_{i},a_{i+1}\right] \right) .
\end{equation*}%
So,

\begin{equation*}
L\left( f\lfloor \left[ a_{i},a_{i+1}\right] \right) =\sup_{Q}V_{Q}\left(
f\lfloor \left[ a_{i},a_{i+1}\right] \right) \leq \liminf_{n}L\left(
f_{n}\lfloor \left[ a_{i},a_{i+1}\right] \right) .
\end{equation*}%
Hence,
\begin{eqnarray*}
L\left( f\right) &=&\sum_{i=0}^{N-1}L\left( f\lfloor _{\left[ a_{i},a_{i+1}%
\right] }\right) \leq \sum_{i=0}^{N-1}\liminf_{n}L\left( f_{n}\lfloor _{%
\left[ a_{i},a_{i+1}\right] }\right) \\
&=&\liminf_{n}L\left( f_{n}\right) .
\end{eqnarray*}
\end{proof}

\begin{proposition}
Suppose $\left( X,J\right) $ is a quasimetric space, and $f\in \mathcal{P}%
_{N}\left( \left[ a,b\right] \text{,}\left( X,J\right) \right) $. If $%
L\left( f\right) =0$, then $f$ is a constant map.
\end{proposition}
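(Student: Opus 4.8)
The plan is to reduce the statement, on each piece of the partition, to the classical fact that a rectifiable curve of zero length in an \emph{honest} metric space is constant, and then to glue the pieces together at the $N-1$ interior nodes. Since $f\in\mathcal{P}_{N}([a,b],(X,J))$, fix a partition $P_{f}=\{a=a_{0}<a_{1}<\cdots <a_{N}=b\}$ witnessing the definition. By the very definition of $L$ we have $L(f)=\sum_{i=0}^{N-1}L\left( f\lfloor _{[a_{i},a_{i+1}]}\right)$, and every summand is $\geq 0$ because $|\dot f(t)|_{J}\geq 0$; hence the hypothesis $L(f)=0$ forces $L\left( f\lfloor _{[a_{i},a_{i+1}]}\right)=0$ for each $i=0,1,\dots ,N-1$.

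Next I would argue inside a single piece $[a_{i},a_{i+1}]$. By hypothesis $J$ is a metric on $f\left( [a_{i},a_{i+1}]\right)$ and $f\lfloor _{[a_{i},a_{i+1}]}$ is Lipschitz, so $f\lfloor _{[a_{i},a_{i+1}]}$ is a rectifiable curve in a genuine metric space, to which the standard theory applies verbatim. In particular, for any $a_{i}\leq s\leq t\leq a_{i+1}$ one has the additivity $L\left( f\lfloor _{[a_{i},a_{i+1}]}\right)=L\left( f\lfloor _{[a_{i},s]}\right)+L\left( f\lfloor _{[s,t]}\right)+L\left( f\lfloor _{[t,a_{i+1}]}\right)$ together with the elementary estimate $J(f(s),f(t))\leq L\left( f\lfloor _{[s,t]}\right)$ (the length dominates the variation over the two-point partition $\{s,t\}$). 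Combining these with $L\left( f\lfloor _{[a_{i},a_{i+1}]}\right)=0$ gives $J(f(s),f(t))=0$, so $f(s)=f(t)$ by the identity of indiscernibles. Thus $f$ is constant on each $[a_{i},a_{i+1}]$; denote this common value by $c_{i}$. (Equivalently, one could note that $|\dot f(t)|_{J}=0$ for a.e.\ $t\in[a_{i},a_{i+1}]$ and invoke $J(f(s),f(t))\leq \int_{s}^{t}|\dot f|_{J}\,d\tau$, valid on a single piece where $J$ is a metric.)

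Finally I would glue: the node $a_{i+1}$ belongs to both $[a_{i},a_{i+1}]$ and $[a_{i+1},a_{i+2}]$, so $c_{i}=f(a_{i+1})=c_{i+1}$ for $i=0,\dots ,N-2$; hence $c_{0}=c_{1}=\cdots =c_{N-1}$, and since $[a,b]=\bigcup_{i=0}^{N-1}[a_{i},a_{i+1}]$ the map $f$ equals the single value $c_{0}$ on all of $[a,b]$, i.e.\ $f$ is constant.

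The only point that needs a word of care — and it is the \emph{main obstacle}, though a mild one — is the second step: one must be certain that the ordinary metric-space facts about length (additivity over subintervals, and domination of the chord distance) may legitimately be used here. They can, precisely because the definition of an $N$-piecewise metric Lipschitz curve guarantees the full triangle inequality for $J$ on each $f\left( [a_{i},a_{i+1}]\right)$, so no quasimetric constant $\sigma(J)$ ever enters within a piece; the relaxed triangle inequality plays no role except for the trivial matching of values at the $N-1$ interior partition points.
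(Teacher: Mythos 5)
Your proof is correct and follows essentially the same route as the paper: reduce $L(f)=0$ to zero length on each piece $[a_i,a_{i+1}]$, use the fact that $J$ is an honest metric on $f([a_i,a_{i+1}])$ so a zero-length Lipschitz curve is constant there, and match values at the shared partition points (the paper phrases this last step via continuity of $f$, which amounts to the same gluing). Your added detail, the chord estimate $J(f(s),f(t))\leq L\left(f\lfloor_{[s,t]}\right)$, is exactly the standard fact the paper's terse proof implicitly relies on.
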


\begin{proof}
$L\left( f\right) =0$ implies that $L\left( f\lfloor _{\left[ a_{i},a_{i+1}%
\right] }\right) =0$ for each $i$. Thus, $f$ is a constant on $%
[a_{i},a_{i+1}]$ for each $i$. Since $f$ is continuous, $f$ is a constant on
$\left[ a,b\right] .$
\end{proof}

Since any Lipschitz curve in a metric space has an arc parametrization, by
applying arc parametrizations piecewisely, we also have

\begin{proposition}
(Reparametrization) For any $f\in \mathcal{P}_{N}\left( \left[ a,b\right]
\text{,}\left( X,J\right) \right) $ and $L=L\left( f\right) $, there exists
a homeomorphism $\phi :\left[ 0,L\right] \rightarrow \left[ a,b\right] $ so
that $\gamma =f\circ \phi \in \mathcal{P}_{N}\left( \left[ 0,L\right]
,\left( X,J\right) \right) $ has $\left| \dot{\gamma}\left( t\right) \right|
_{J}=1$ almost everywhere in $\left[ 0,L\right] $.
\end{proposition}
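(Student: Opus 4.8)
The plan is to reduce the statement to the classical unit-speed (arc-length) reparametrization theorem for rectifiable curves in a metric space, apply it separately on each of the $N$ subintervals $[a_i,a_{i+1}]$ of the partition $P_f=\{a=a_0<\cdots<a_N=b\}$ — on each of which $J$ is a genuine metric — and then splice the reparametrized pieces together. First I would dispose of trivialities: if $L=L(f)=0$ then $f$ is constant by the preceding proposition and there is nothing to prove, so assume $L>0$; also, as is implicit in any statement of this kind, I assume $f$ is not constant on any nondegenerate subinterval of $[a,b]$, since this is precisely the condition that will make the reparametrizing maps below strictly monotone, hence homeomorphisms rather than mere continuous surjections. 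Write $f_i:=f\lfloor_{[a_i,a_{i+1}]}$ and $\ell_i:=L(f_i)$, so $\ell_i>0$ and $\sum_{i=0}^{N-1}\ell_i=L$ by the definition of $L_J$.

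Next I would reparametrize each piece. By hypothesis each $f_i$ is a Lipschitz, hence rectifiable, curve into the metric space $\bigl(f([a_i,a_{i+1}]),J\bigr)$, so the standard theory of rectifiable curves in metric spaces (see e.g.\ \cite{Ambrosio1}) applies verbatim on this subinterval: the arc-length function $s_i(t):=L\bigl(f\lfloor_{[a_i,t]}\bigr)$ is continuous and, since no subcurve of $f_i$ has length zero, strictly increasing from $[a_i,a_{i+1}]$ onto $[0,\ell_i]$; and there is a $1$-Lipschitz curve $\gamma_i:[0,\ell_i]\to X$ with $f_i=\gamma_i\circ s_i$ and $|\dot\gamma_i|_J=1$ a.e.\ on $[0,\ell_i]$. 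Hence $\psi_i:=s_i^{-1}:[0,\ell_i]\to[a_i,a_{i+1}]$ is a homeomorphism and $\gamma_i=f_i\circ\psi_i$.

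Then I would splice. Set $b_0:=0$ and $b_i:=\ell_0+\cdots+\ell_{i-1}$, so $0=b_0<b_1<\cdots<b_N=L$, and define $\phi:[0,L]\to[a,b]$ by $\phi(\tau):=\psi_i(\tau-b_i)$ for $\tau\in[b_i,b_{i+1}]$. These definitions agree at the shared endpoints because $\psi_i(\ell_i)=a_{i+1}=\psi_{i+1}(0)$, so $\phi$ is a well-defined, continuous, strictly increasing surjection, hence a homeomorphism. Putting $\gamma:=f\circ\phi$, on each $[b_i,b_{i+1}]$ one has $\gamma(\tau)=f_i(\psi_i(\tau-b_i))=\gamma_i(\tau-b_i)$; from this it follows that $\gamma$ is continuous on $[0,L]$, that $\gamma\lfloor_{[b_i,b_{i+1}]}$ is $1$-Lipschitz, that $J$ is a metric on $\gamma([b_i,b_{i+1}])=f([a_i,a_{i+1}])$, and that $|\dot\gamma(\tau)|_J=|\dot\gamma_i(\tau-b_i)|_J=1$ for a.e.\ $\tau\in[b_i,b_{i+1}]$. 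Therefore $\gamma\in\mathcal P_N([0,L],(X,J))$ with partition $\{b_0<\cdots<b_N\}$, and since $[0,L]$ is a finite union of the intervals $[b_i,b_{i+1}]$ we conclude $|\dot\gamma|_J=1$ almost everywhere on $[0,L]$.

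The mathematical substance here is imported wholesale from the metric-space theory of rectifiable curves, so there is no deep obstacle. The genuine care goes into the strict monotonicity of the $s_i$ (and hence of $\phi$): a homeomorphism cannot be flat on a subinterval, so the existence of such a $\phi$ really does presuppose that $f$ has no nondegenerate constant subinterval; and into the elementary but fiddly bookkeeping of the concatenation — verifying continuity at the junctions $b_i$ and checking, piece by piece, the two defining conditions of membership in $\mathcal P_N$.
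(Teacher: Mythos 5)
Your proof is correct and follows essentially the same route the paper intends: the paper offers no detailed argument, simply asserting that the result follows ``by applying arc parametrizations piecewisely,'' which is exactly your piece-by-piece arc-length reparametrization followed by splicing at the partition points $b_i$. Your explicit caveat that $f$ must have no nondegenerate constant subinterval (so that each arc-length function $s_i$ is strictly increasing and $\phi$ is genuinely a homeomorphism) is a legitimate point that the paper's statement silently glosses over, and your handling of it is the only place where you add substance beyond the paper's one-line justification.
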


\subsection{The geodesic problem}

\bigskip Let $N$ be a fixed natural number. For any $x,y\in X$, we consider
the geodesic problem
\begin{equation}
\min \{ L\left( f\right) \}  \label{geodesic_problem}
\end{equation}%
among all $f$ in the family
\begin{equation*}
Path_{N}\left( x,y\right) =\left\{ f\in \mathcal{P}_{N}\left( \left[ 0,1%
\right] \text{,}\left( X,J\right) \right) \text{ with }f\left( 0\right)
=x;f\left( 1\right) =y\right\} .
\end{equation*}

Note that, by a linear change of variable, one may replace $\left[ 0,1\right]
$ in $Path_{N}\left( x,y\right) $ by any closed interval $\left[ a,b\right] $
without changing the infimum value in the geodesic problem (\ref%
{geodesic_problem}).

\begin{definition}
Suppose $J$ is a quasimetric on $X$. For any $x,y\in X$, and $N\in \mathbb{N}$%
, define
\begin{equation*}
D_{J}^{\left( N\right) }\left( x,y\right) =\inf \left\{ L_{J}\left( f\right)
:f\in Path_{N}\left( x,y\right) \right\}
\end{equation*}%
whenever $Path_{N}\left( x,y\right) $ is not empty, and set $D_{J}^{\left(
N\right) }\left( x,y\right) =\infty $ when $Path_{N}\left( x,y\right) $ is
empty. Since $D_{J}^{\left( N\right) }\left( x,y\right) $ is a decreasing
function of $N$, we define
\begin{equation*}
D_{J}\left( x,y\right) =\lim_{N\rightarrow \infty }D_{J}^{\left( N\right)
}\left( x,y\right) .
\end{equation*}
\end{definition}

\begin{theorem}
Suppose $J$ is a continuous complete quasimetric on a nonempty set
$X$. For
any\thinspace $\ N\in \mathbb{N}$, and $x,y\in X$, the geodesic problem (\ref%
{geodesic_problem}) admits a solution $f\in Path_{N}\left( x,y\right) $ \
provided that $Path_{N}\left( x,y\right) $ is not empty. So, $L\left(
f\right) =D_{J}^{\left( N\right) }\left( x,y\right) $.
\end{theorem}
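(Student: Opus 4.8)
The plan is to use the direct method of the calculus of variations, exactly as one does for the geodesic problem in a metric space, but with the tools developed earlier in the excerpt. First I would take a minimizing sequence $\{f_n\}\subset Path_N(x,y)$, i.e. $L(f_n)\to D_J^{(N)}(x,y)=:D<\infty$, with each $f_n$ associated to its own partition $P_{f_n}=\{0=a_0^n<a_1^n<\cdots<a_N^n=1\}$. By the piecewise reparametrization proposition I may assume each restriction $f_n\lfloor_{[a_i^n,a_{i+1}^n]}$ is parametrized proportionally to arclength, so that $Lip\bigl(f_n\lfloor_{[a_i^n,a_{i+1}^n]}\bigr)$ is controlled; more precisely, after this normalization the local Lipschitz constant on the $i$-th piece is $L(f_n\lfloor_{[a_i^n,a_{i+1}^n]})/(a_{i+1}^n-a_i^n)$. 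To get a uniform Lipschitz bound independent of $n$ and $i$, I would argue that one may choose the partition points so that each piece carries a comparable share of the total length (splitting a piece further only keeps it in $Path_N$ if we have room, so one really wants to first pass to a subsequence along which the partition points $a_i^n\to a_i$ converge — this is possible since they live in the compact $[0,1]^{N+1}$ — and then handle the pieces that may degenerate).

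Once the partition points are assumed convergent, $a_i^n\to a_i$ with $0=a_0\le a_1\le\cdots\le a_N=1$, the pieces split into those with $a_i<a_{i+1}$ (non-degenerate) and those with $a_i=a_{i+1}$ (degenerate). On each non-degenerate piece, for $n$ large the interval $[a_i^n,a_{i+1}^n]$ is uniformly close to $[a_i,a_{i+1}]$, the arclength parametrization gives a Lipschitz bound $C_i$ there, and Lemma~\ref{sequentially_compact_family} applies: restricted to a fixed slightly smaller interval, the family of such curves is bounded, closed and equicontinuous, so by Corollary~\ref{Ascoli} (using that $(X,J)$ is complete and, since $J$ is continuous, it is lower semicontinuous) a subsequence converges uniformly in $J_\infty$ to a limit curve $f$ on that piece, with $f\lfloor_{[a_i,a_{i+1}]}\in\mathcal P_1$ and $L(f\lfloor_{[a_i,a_{i+1}]})\le\liminf_n L(f_n\lfloor_{[a_i^n,a_{i+1}^n]})$, again by Lemma~\ref{sequentially_compact_family}. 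For a degenerate piece with $a_i=a_{i+1}$ the curve simply collapses to a point in the limit and contributes $0$ to the length; one must check continuity of $f$ across all the break points $a_i$, which follows because consecutive pieces share endpoints ($f_n(a_{i+1}^n)=f_n(a_{i+1}^n)$) and the uniform $J_\infty$ convergence together with continuity of $J$ pins down a single well-defined value $f(a_i)$. Thus $f\in\mathcal P_N([0,1],(X,J))$, $f(0)=x$, $f(1)=y$, so $f\in Path_N(x,y)$.

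Summing the lower-semicontinuity estimates over the (at most $N$) pieces gives
\begin{equation*}
L(f)=\sum_{i=0}^{N-1}L\bigl(f\lfloor_{[a_i,a_{i+1}]}\bigr)\le\sum_{i=0}^{N-1}\liminf_n L\bigl(f_n\lfloor_{[a_i^n,a_{i+1}^n]}\bigr)\le\liminf_n\sum_{i=0}^{N-1}L\bigl(f_n\lfloor_{[a_i^n,a_{i+1}^n]}\bigr)=\liminf_n L(f_n)=D,
\end{equation*}
while $L(f)\ge D_J^{(N)}(x,y)=D$ by definition since $f\in Path_N(x,y)$. Hence $L(f)=D_J^{(N)}(x,y)$ and $f$ solves the geodesic problem.

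The main obstacle I anticipate is the bookkeeping around the partition points: securing a subsequence along which all $a_i^n$ converge, obtaining a uniform (in $n$ and $i$) Lipschitz bound on the reparametrized pieces (which forces one to be careful that a piece may shrink to zero length or zero width, and these two phenomena must be decoupled), and verifying that the limit curve is genuinely continuous on all of $[0,1]$ and that $J$ is a metric on each of its $N$ pieces — the latter being exactly where the closedness part of Lemma~\ref{sequentially_compact_family} is used. Everything else (existence of the uniform limit, lower semicontinuity of length) is handed to us by Corollary~\ref{Ascoli} and Lemma~\ref{sequentially_compact_family}, with lower semicontinuity of $J$ supplied for free by the hypothesis that $J$ is continuous.
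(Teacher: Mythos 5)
Your proposal is correct and follows essentially the same route as the paper: take a minimizing sequence, reparametrize each piece proportionally to arclength to get a uniform Lipschitz bound, pass to a subsequence along which the partition points converge, and then invoke Lemma \ref{sequentially_compact_family} together with Corollary \ref{Ascoli} (lower semicontinuity of $J$ coming from its continuity) to extract a $J_\infty$-convergent subsequence and use lower semicontinuity of the length. The only differences are bookkeeping: the paper first disposes of the trivial case $L=0$ and then fixes the limit partition once and for all by a linear change of variable (so the lemma applies to a single fixed family $\mathcal{F}$ with $Lip\le 2L$), whereas you keep the partitions moving and deal with possibly degenerate limit pieces directly.
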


\begin{proof}
Suppose $Path_{N}\left( x,y\right) $ is not empty. Let $L=\inf \left\{
L\left( f\right) :f\in Path_{N}\left( x,y\right) \right\} $. Note that for
each $f\in Path_{N}\left( x,y\right) $, we have
\begin{eqnarray*}
J\left( x,y\right) &\leq &\sigma \left( J\right)
^{N-1}\sum_{i=0}^{N-1}J\left( f\left( a_{i}\right) ,f\left( a_{i+1}\right)
\right) \\
&\leq &\sigma \left( J\right) ^{N-1}\sum_{i=0}^{N-1}L\left( f\lfloor _{\left[
a_{i},a_{i+1}\right] }\right) =\sigma \left( J\right) ^{N-1}L\left( f\right)
\text{.}
\end{eqnarray*}%
This implies that if $L=0$, then we have $J\left( x,y\right) =0$. Therefore,
$x=y$ and the constant $f\left( t\right) \equiv x$ is the desired solution.

So, without losing generality, we may assume that $L>0$. Let $\left\{
f_{n}\right\} $ be a length minimizing sequence in $Path_{N}\left(
x,y\right) $ with $L\left( f_{n}\right) \rightarrow L$. Let
\begin{equation*}
P_{f_{n}}=\left\{ 0=a_{0}^{\left( n\right) }<a_{1}^{\left( n\right) }<\cdots
<a_{N}^{\left( n\right) }=1\right\}
\end{equation*}%
be the partition of $\left[ 0,1\right] $, associated with $f_{n}$. By
reparametrization if necessary, we may assume that each $f_{n}$ is Lipschitz
with $Lip\left( f_{n}\right) \leq 1.5L$ on $\left[ a_{i}^{\left( n\right)
},a_{i+1}^{\left( n\right) }\right] $ for each $i=0,\cdots ,N-1$. Then, by
choosing a subsequence if necessary, we may assume that each sequence $%
\left\{ a_{i}^{\left( n\right) }\right\} $ is convergent to some point $%
a_{i} $ as $n\rightarrow \infty $ for each $i=0,1,\cdots ,N$. Using a linear
change of variable, we may assume that for each $i$, $a_{i}^{\left( n\right)
}=a_{i}$ and $Lip\left( f_{n}\right) \leq 2L$ on $\left[ a_{i},a_{i+1}\right]
$. Now, $\left\{ f_{n}\right\} $ is a sequence in the family
\begin{equation*}
\mathcal{F}=\left\{
\begin{array}{c}
f\in C\left( \left[ 0,1\right] ,\left( X,J\right) \right) :\text{ }f\left(
0\right) =x,f\left( 1\right) =y\text{, and }J\text{ is a metric on} \\
f\left( \left[ a_{i},a_{i+1}\right] \right) \text{ and }Lip\left( f\lfloor _{%
\left[ a_{i},a_{i+1}\right] }\right) \leq 2L,\text{ for each }i=0,\cdots ,N-1%
\end{array}%
\right\} .
\end{equation*}%
By lemma \ref{sequentially_compact_family}, $\mathcal{F}$ is a bounded,
closed and equicontinuous subset of $C\left( \left[ 0,1\right] ,\left(
X,J\right) \right) $. By the Ascoli-Arzel\`{a} theorem shown in corollary %
\ref{Ascoli}, a subsequence $\left\{ f_{n_{k}}\right\} $ of $\left\{
f_{n}\right\} $ in $\mathcal{F}$ is uniformly convergent to some $f\in
\mathcal{F}$ with respect to $J_{\infty }$. By the lower semicontinuity of $%
L $ in the family $\mathcal{F}$, we have $L\left( f\right) \leq
\liminf_{k}L\left( f_{n_{k}}\right) =L$. Therefore, $f$ is a length
minimizer in $Path_{N}\left( x,y\right) $.
\end{proof}

Note that each $D_{J}^{\left( N\right) }$ is a semimetric
 \footnote{A function $d:X\times X\rightarrow \lbrack 0,+\infty )$ is a \textit{%
semimetric} on $X$ if $d$ satisfies conditions (\ref{condition_1}),(\ref%
{condition_2}),(\ref{condition_3}) in Definition
\ref{near_metric_def}. So, a semimetric $d$ is not required to
satisfy the triangle inequality.} on $X$ in the
sense that $D_{J}^{\left( N\right) }\left( x,y\right) \geq 0$, $%
D_{J}^{\left( N\right) }\left( x,y\right) =0$ if and only if $x=y$, and $%
D_{J}^{\left( N\right) }\left( x,y\right) =D_{J}^{\left( N\right) }\left(
y,x\right) $. In general, $D_{J}^{\left( N\right) }$ may fail to satisfy the
triangle inequality. Nevertheless, we have
\begin{equation*}
D_{J}^{\left( n+m\right) }\left( x,y\right) \leq D_{J}^{\left( n\right)
}\left( x,z\right) +D_{J}^{\left( m\right) }\left( z,y\right)
\end{equation*}%
for any $m,n$ and $x,y,z\in X$. \ As a result, by letting $N\rightarrow
\infty $, we have

\begin{proposition}
Suppose $J$ is a quasimetric on $X$, then $D_{J}$ is a pseudometric
\footnote{A function $d:X\times X\rightarrow \lbrack 0,+\infty )$ is
a
\textit{pseudometric} on $X$ if $d$ satisfies conditions (\ref{condition_1}%
),(\ref{condition_3}) in Definition \ref{near_metric_def}, and the
triangle inequality $d\left( x,y\right) \leq d\left( x,z\right)
+d\left( z,y\right) $ for any $x,y,z\in X$. But $d\left( x,y\right)
=0$ does not necessarily imply $x=y$.} on $X$.
\end{proposition}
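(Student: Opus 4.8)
The plan is to obtain all four defining properties of a pseudometric — non-negativity, symmetry, $D_{J}(x,x)=0$, and the triangle inequality — by passing to the limit $N\to\infty$ in the corresponding (approximate) properties of the semimetrics $D_{J}^{(N)}$.

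Non-negativity and symmetry are immediate. Each $D_{J}^{(N)}(x,y)$ is a non-negative number, so $D_{J}(x,y)=\lim_{N}D_{J}^{(N)}(x,y)\geq 0$. For symmetry, the map $f\mapsto\tilde{f}$ with $\tilde{f}(t)=f(1-t)$ is a length-preserving bijection of $Path_{N}(x,y)$ onto $Path_{N}(y,x)$: it carries the partition $P_{f}$ to its reflection, preserves the property that $J$ is a metric on each $\tilde{f}([a_{i},a_{i+1}])$, and leaves $|\dot{f}(t)|_{J}$ unchanged up to the sign of $s-t$; hence $D_{J}^{(N)}(x,y)=D_{J}^{(N)}(y,x)$, and letting $N\to\infty$ gives $D_{J}(x,y)=D_{J}(y,x)$. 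Finally, the constant curve $f\equiv x$ belongs to $Path_{1}(x,x)$, since $J$ is trivially a metric on the one-point set $\{x\}$ and constant maps are Lipschitz with constant $0$; therefore $0\leq D_{J}^{(N)}(x,x)\leq D_{J}^{(1)}(x,x)\leq L(f)=0$ for every $N$, so $D_{J}(x,x)=0$.

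For the triangle inequality I would invoke the inequality
\begin{equation*}
D_{J}^{\left( n+m\right) }\left( x,y\right) \leq D_{J}^{\left( n\right) }\left( x,z\right) +D_{J}^{\left( m\right) }\left( z,y\right)
\end{equation*}
recorded just before the statement, which holds because, given an almost-optimal $g\in Path_{n}(x,z)$ and an almost-optimal $h\in Path_{m}(z,y)$, one may reparametrize $h$ onto an interval adjacent to the domain of $g$ and concatenate: the result is a curve in $Path_{n+m}(x,y)$ that is still piecewise-metric Lipschitz, has at most $n+m$ pieces, and has length $L(g)+L(h)$. Specializing to $n=m=N$ yields $D_{J}^{(2N)}(x,y)\leq D_{J}^{(N)}(x,z)+D_{J}^{(N)}(z,y)$; since $\{D_{J}^{(M)}(x,y)\}_{M}$ is non-increasing, the subsequence $\{D_{J}^{(2N)}(x,y)\}_{N}$ converges to the same limit $D_{J}(x,y)$, and letting $N\to\infty$ on both sides gives $D_{J}(x,y)\leq D_{J}(x,z)+D_{J}(z,y)$.

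The one genuinely delicate point — and precisely the reason $D_{J}$ is only a pseudometric and not a metric — is that $D_{J}(x,y)=0$ need not imply $x=y$: for $f\in Path_{N}(x,y)$ we only have $J(x,y)\leq\sigma(J)^{N-1}L(f)$, and the factor $\sigma(J)^{N-1}$ blows up as $N\to\infty$, so this lower bound degenerates in the limit. (One should also permit the value $+\infty$ when every $Path_{N}(x,y)$ is empty, in which case $D_{J}$ is extended-real-valued; all the inequalities above remain valid under the usual conventions.) I expect the concatenation–reparametrization bookkeeping behind the displayed inequality to be the only step requiring any care, and it is routine.
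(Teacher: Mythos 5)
Your proof is correct and follows essentially the same route as the paper, which simply observes that each $D_{J}^{(N)}$ is a semimetric, records the concatenation inequality $D_{J}^{(n+m)}(x,y)\leq D_{J}^{(n)}(x,z)+D_{J}^{(m)}(z,y)$, and lets $N\to\infty$; you have merely filled in the routine details (time reversal for symmetry, the constant curve for $D_{J}(x,x)=0$, concatenation with reparametrization for the displayed inequality, and monotonicity in $N$ to pass to the limit). Your remark that $D_{J}$ may take the value $+\infty$ when every $Path_{N}(x,y)$ is empty is a fair caveat, consistent with the paper's own example following this proposition.
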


Since $D_{J}$ is a pseudometric, $D_{J}$ is a metric on $X$ if and only if%
\begin{equation*}
D_{J}\left( x,y\right) >0\text{ whenever }x\neq y.
\end{equation*}%
When $D_{J}$ becomes a metric on $X$. This metric is called the \textit{%
intrinsic metric}, or \textit{geodesic distance}, on $X$ induced by
the quasimetric $J$.

\subsection{Examples of metrics induced by quasimetrics}

\bigskip Now, we are interested in cases that $D_{J}$ is indeed a metric on $%
X$.

\subsubsection{Ideal quasimetrics}

Let $J$ be any semimetric on $X$. For any $x,y \in X$, we set
\begin{equation*}
d_J(x,y)
\end{equation*}
to be the infimum of
\begin{equation*}
\sum_{i=1}^{n-1}J\left( x_{i},x_{i+1}\right)
\end{equation*}
over all finitely many points $x_1,\cdots,x_n \in X $ with $x_{1}=x$ and $%
x_{n}=y$.

This $d_{J}$ defines a pseudometric on $X$, but not necessarily a metric on $%
X$.

\begin{example}
For instance, let $X=\left[ 0,1\right] $ and $J\left( x,y\right) =\left|
x-y\right| ^{p}$ for some $p>1$ defines a quasimetric on $X$. Then, for each $%
n$,
\begin{eqnarray*}
d_{J}\left( 0,1\right) &\leq &\sum_{i=0}^{n-1}J\left( \frac{i}{n},\frac{i+1}{%
n}\right) \\
&=&\sum_{i=0}^{n-1}\left( \frac{1}{n}\right) ^{p}=\frac{1}{n^{p-1}}%
\rightarrow 0\text{ as }n\rightarrow \infty \text{.}
\end{eqnarray*}%
Thus, $d_{J}\left( 0,1\right) =0$, but $0\neq 1$. Hence $d_{J}$ is not a
metric on $X$. Also, note that in this example, $Path_{N}\left( x,y\right) $
is empty whenever $x\neq y$. Thus, $D_{J}\left( x,y\right) =\infty $
whenever $x\neq y$.
\end{example}

As in the case of $D_{J}$, $d_{J}$ is a metric on $X$ if and only if%
\begin{equation*}
d_{J}\left( x,y\right) >0\text{ whenever }x\neq y.
\end{equation*}%
Note also that
\begin{equation*}
d_{J}\left( x,y\right) \leq D_{J}^{\left( N\right) }\left( x,y\right)
\end{equation*}%
for each $N$, and thus,%
\begin{equation*}
d_{J}\left( x,y\right) \leq D_{J}\left( x,y\right) \text{.}
\end{equation*}%
Therefore, $d_{J}\left( x,y\right) >0$ will automatically imply $D_{J}\left(
x,y\right) >0$. As a result, we have

\begin{proposition}
\label{d_J-D_J}Suppose $J$ is a quasimetric on $X$.\ If $d_{J}$ is a
metric
on $X$ and $D_{J}\left( x,y\right) <\infty $ for every $x,y\in X$, then $%
D_{J}$ also defines a metric on $X$ .
\end{proposition}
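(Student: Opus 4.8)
The plan is to combine three facts already available: (i) $D_J$ is a pseudometric on $X$, so it is non-negative, symmetric, and satisfies the triangle inequality, with $D_J(x,x)=0$; (ii) by hypothesis $D_J(x,y)<\infty$ for all $x,y$, so $D_J$ is genuinely a function into $[0,\infty)$; and (iii) $d_J(x,y)\le D_J(x,y)$ for all $x,y$, which was recorded just before the statement. Given these, the only thing left to check in order to upgrade ``pseudometric'' to ``metric'' is the positivity $D_J(x,y)>0$ for $x\ne y$.

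First I would fix $x\ne y$ and use the hypothesis that $d_J$ is a metric: this gives $d_J(x,y)>0$. Then, invoking $d_J(x,y)\le D_J(x,y)$, I conclude $D_J(x,y)\ge d_J(x,y)>0$. For completeness I would also spell out why $d_J\le D_J$: for any $N$ and any $f\in Path_N(x,y)$ with associated partition $0=a_0<\cdots<a_N=1$, on each subinterval $J$ is a metric on $f([a_i,a_{i+1}])$, so the length of the restriction of $f$ to $[a_i,a_{i+1}]$ is at least $J(f(a_i),f(a_{i+1}))$; summing over $i$ gives $L_J(f)\ge\sum_{i=0}^{N-1}J(f(a_i),f(a_{i+1}))\ge d_J(x,y)$, and taking infima over $f$ and then over $N$ yields $d_J(x,y)\le D_J^{(N)}(x,y)\le D_J(x,y)$.

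Putting this together, $D_J$ is a symmetric, $[0,\infty)$-valued function satisfying the triangle inequality, with $D_J(x,y)=0$ exactly when $x=y$; hence $D_J$ is a metric on $X$. I do not expect any genuine obstacle: the entire content lies in the inequality chain $d_J\le D_J^{(N)}\le D_J$ together with the earlier result that $D_J$ is a pseudometric. The single point requiring care is that ``metric'' also demands finiteness of $D_J$ — which is exactly why the hypothesis $D_J(x,y)<\infty$ appears in the statement — so that both this finiteness and the separation of points supplied by $d_J$ are genuinely used.
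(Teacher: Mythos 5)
Your proposal is correct and follows essentially the same route as the paper: the paper also deduces positivity of $D_J$ from the chain $d_J(x,y)\le D_J^{(N)}(x,y)\le D_J(x,y)$ (recorded just before the proposition) together with the fact that $D_J$ is a pseudometric, with the hypothesis $D_J(x,y)<\infty$ supplying finiteness. Your added verification that $L_J(f)\ge\sum_i J(f(a_i),f(a_{i+1}))\ge d_J(x,y)$ is a correct spelling-out of the inequality the paper states without detail.
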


\begin{remark}
When $J$ is indeed a metric on $X$, then both $d_{J}$ and $D_{J}$ are
metrics. In this case, $d_{J}$ is just the metric $J$ itself, while $D_{J}$
is the intrinsic metric induced by $J.$
\end{remark}

In general, by means of definition, we have
\begin{equation*}
d_{J}\left( x,y\right) \leq J\left( x,y\right) \leq \sigma _{\infty }\left(
J\right) d_{J}\left( x,y\right) ,
\end{equation*}%
where $\sigma _{\infty }\left( J\right) $ is defined as in (\ref{ideal}).

Now, suppose $J$ is an ideal quasimetric, then $\sigma _{\infty
}\left( J\right) <\infty $ and $J$ satisfies the condition
\begin{equation*}
J\left( x_{1},x_{n}\right) \leq \sigma _{\infty }\left( J\right)
\sum_{i=1}^{n-1}J\left( x_{i},x_{i+1}\right)
\end{equation*}%
for any finitely many points $\left\{ x_{1},x_{2},\cdots ,x_{n}\right\}
\subset X$. Clearly, we have the following proposition:

\begin{proposition}
Suppose $\left( X,J\right) $ is an ideal quasimetric space. Then for
any $N$ and any $f\in \mathcal{P}_{N}\left( \left[ a,b\right]
\text{,}\left( X,J\right) \right) $, we have
\begin{equation*}
J\left( f\left( a\right) ,f\left( b\right) \right) \leq \sigma _{\infty
}\left( J\right) L\left( f\right) .
\end{equation*}
\end{proposition}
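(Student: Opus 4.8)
The plan is to combine two ingredients: on each interval of the defining partition the curve lives in a genuine metric space, where the classical inequality ``length of a curve $\ge$ distance between its endpoints'' is available; and the ideal property of $J$ then lets us chain these finitely many pieces back together at the cost of the single multiplicative constant $\sigma_{\infty}\left(J\right)$.

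First I would fix $f\in\mathcal{P}_{N}\left(\left[a,b\right],\left(X,J\right)\right)$ together with its associated partition $P_{f}=\left\{a=a_{0}<a_{1}<\cdots<a_{N}=b\right\}$, so that for each $i=0,\cdots,N-1$ the function $J$ restricts to a genuine metric on $f\left(\left[a_{i},a_{i+1}\right]\right)$ and $f\lfloor_{\left[a_{i},a_{i+1}\right]}$ is Lipschitz. On the metric space $\left(f\left(\left[a_{i},a_{i+1}\right]\right),J\right)$ the length equals $\sup_{Q}V_{Q}\bigl(f\lfloor_{\left[a_{i},a_{i+1}\right]}\bigr)$, the supremum over partitions $Q$ of $\left[a_{i},a_{i+1}\right]$ of the variation; this uses the stated equivalence of the metric-derivative formula with the sup-of-variations formula, which is legitimate because $f$ is Lipschitz on that subinterval. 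Taking the trivial partition $Q=\left\{a_{i},a_{i+1}\right\}$ gives $V_{Q}=J\left(f\left(a_{i}\right),f\left(a_{i+1}\right)\right)$, hence $J\left(f\left(a_{i}\right),f\left(a_{i+1}\right)\right)\leq L\bigl(f\lfloor_{\left[a_{i},a_{i+1}\right]}\bigr)$ for every $i$.

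Next I would apply the ideal quasimetric inequality \eqref{ideal_quasimetric} to the finitely many points $f\left(a_{0}\right),f\left(a_{1}\right),\cdots,f\left(a_{N}\right)$ of $X$, whose endpoints are $f\left(a\right)$ and $f\left(b\right)$. Since $J$ is ideal with constant $\sigma_{\infty}\left(J\right)$, this yields $J\left(f\left(a\right),f\left(b\right)\right)\leq\sigma_{\infty}\left(J\right)\sum_{i=0}^{N-1}J\left(f\left(a_{i}\right),f\left(a_{i+1}\right)\right)$. Substituting the per-piece estimate from the previous step and then using the definition $L\left(f\right)=\sum_{i=0}^{N-1}L\bigl(f\lfloor_{\left[a_{i},a_{i+1}\right]}\bigr)$ gives $J\left(f\left(a\right),f\left(b\right)\right)\leq\sigma_{\infty}\left(J\right)L\left(f\right)$, which is the assertion.

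I do not expect any real obstacle here; the proof is a short two-line estimate once the pieces are in place. The only point that deserves a word of care is the very first inequality ``length dominates endpoint distance,'' since the paper's working definition of $L_{J}$ is the metric-derivative integral rather than the sup-of-variations; I would simply invoke the equivalence of the two formulas on each Lipschitz metric piece, exactly as was already done in the proof of Lemma \ref{sequentially_compact_family}.
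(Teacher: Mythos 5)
Your argument is correct and is exactly the one the paper intends: the paper states this proposition without proof ("Clearly"), and the two ingredients you use — endpoint distance bounded by length on each metric Lipschitz piece, then the ideal inequality \eqref{ideal_quasimetric} applied to the points $f(a_0),\dots,f(a_N)$ — are precisely the chain the paper itself runs (with $\sigma(J)^{N-1}$ in place of $\sigma_\infty(J)$) inside the proof of the geodesic existence theorem. Your care about equating the metric-derivative integral with the sup-of-variations on each Lipschitz piece is appropriate and unproblematic.
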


\begin{lemma}
\label{J_ideal}Suppose $J$ is an ideal quasimetric on $X$ . Then,
$d_{J}$ is
a metric on $X$. Moreover, if $D_{J}\left( x,y\right) <\infty $ for every $%
x,y\in X$, then $D_{J}$ also defines a metric on $X$.
\end{lemma}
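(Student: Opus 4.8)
The plan is to show that the infimum defining $d_{J}(x,y)$ is strictly positive whenever $x\neq y$, and then invoke Proposition \ref{d_J-D_J} for the second assertion. The key observation is that, because $J$ is ideal, every admissible chain $x=x_{1},\cdots,x_{n}=y$ satisfies
\begin{equation*}
\sum_{i=1}^{n-1}J\left( x_{i},x_{i+1}\right) \geq \frac{1}{\sigma_{\infty}\left( J\right)}J\left( x_{1},x_{n}\right) = \frac{1}{\sigma_{\infty}\left( J\right)}J\left( x,y\right),
\end{equation*}
using the characterization of ideal quasimetrics in (\ref{ideal_quasimetric}) with $\sigma = \sigma_{\infty}(J)$. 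Taking the infimum over all such chains gives
\begin{equation*}
d_{J}\left( x,y\right) \geq \frac{1}{\sigma_{\infty}\left( J\right)}J\left( x,y\right) > 0
\end{equation*}
whenever $x\neq y$, since $\sigma_{\infty}(J)<\infty$ and $J$ satisfies the identity of indiscernibles. Combined with the already-noted inequality $d_{J}(x,y)\leq J(x,y)$, this in fact reproves the two-sided bound $d_{J}(x,y)\leq J(x,y)\leq \sigma_{\infty}(J)\,d_{J}(x,y)$ stated just before the lemma.

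First I would recall that $d_{J}$ is already known to be a pseudometric on $X$ (stated in the text right after its definition), so the only thing to verify for it to be a metric is positivity off the diagonal. Then I would carry out the chain estimate above. For the ``moreover'' part, having established that $d_{J}$ is a metric, I would simply apply Proposition \ref{d_J-D_J}: under the additional hypothesis $D_{J}(x,y)<\infty$ for all $x,y$, that proposition directly yields that $D_{J}$ is a metric on $X$. No new argument is needed there.

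There is essentially no serious obstacle here: the lemma is an immediate consequence of the definition of ``ideal'' and of Proposition \ref{d_J-D_J}. The one small point to be careful about is to use the correct constant — the relaxed triangle inequality with parameter $\sigma(J)$ only controls three-point chains, so one must invoke the strengthened inequality (\ref{ideal_quasimetric}) with $\sigma = \sigma_{\infty}(J)$, which is exactly the defining property of an ideal quasimetric and is valid uniformly over chains of arbitrary length. Once that is in hand, the strict positivity of $d_{J}(x,y)$ for $x\neq y$ follows because $\sigma_{\infty}(J)$ is a finite constant.
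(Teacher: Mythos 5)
Your proof is correct and follows essentially the same route as the paper: the paper's one-line argument rests on exactly the inequality $d_{J}(x,y)\geq \frac{1}{\sigma_{\infty}(J)}J(x,y)>0$ for $x\neq y$ (already recorded as the two-sided bound before the lemma), with the ``moreover'' part handled by Proposition \ref{d_J-D_J}. You have merely spelled out the chain estimate behind that inequality, which is fine.
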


\begin{proof}
This is simply because when $x\neq y$, $d_{J}\left( x,y\right) \geq \frac{1}{%
\sigma _{\infty }\left( J\right) }J\left( x,y\right) >0$.
\end{proof}

\subsubsection{Perfect quasimetrics}

Here is another kind of quasimetric $J$ which also induces a metric
$D_{J}$.

\begin{definition}
\label{perfect}A quasimetric $J$ on $X$ is a perfect near metric if for any $%
x,y\in X$, the value $D_{J}^{\left( N\right) }\left( x,y\right) $ becomes a
real valued constant $D_{J}\left( x,y\right) $ when $N$ is large enough.
\end{definition}

Since for each $N$, $D_{J}^{\left( N\right) }\left( x,y\right) =0$ if and
only if $x=y$, we have the following theorem.

\begin{proposition}
On a perfect quasimetric space $\left( X,J\right) $, $D_{J}$ defines
a metric on $X$.
\end{proposition}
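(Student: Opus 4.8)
The plan is to use the fact already recorded just above the statement: for each fixed $N$, the quantity $D_{J}^{(N)}(x,y)$ is a semimetric, so in particular $D_{J}^{(N)}(x,y)=0$ if and only if $x=y$. Combined with the definition of a perfect quasimetric and the general proposition that $D_{J}=\lim_{N}D_{J}^{(N)}$ is always a pseudometric, almost everything is in place; what remains is to upgrade ``pseudometric'' to ``metric,'' i.e. to verify that $D_{J}(x,y)>0$ whenever $x\neq y$, and to note finiteness so that $D_{J}$ is genuinely real-valued.

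First I would invoke Definition \ref{perfect}: since $J$ is perfect, for each pair $x,y\in X$ there is an integer $N_{0}=N_{0}(x,y)$ such that $D_{J}^{(N)}(x,y)=D_{J}(x,y)$ is a fixed real number for all $N\geq N_{0}$. In particular $D_{J}(x,y)=D_{J}^{(N_{0})}(x,y)<\infty$, so $D_{J}$ takes values in $[0,\infty)$. Second, I would fix $x\neq y$ and argue by contradiction: if $D_{J}(x,y)=0$, then for every $N\geq N_{0}$ we would have $D_{J}^{(N)}(x,y)=0$; but $D_{J}^{(N)}$ is a semimetric, and its ``identity of indiscernibles'' property forces $x=y$, a contradiction. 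Hence $D_{J}(x,y)>0$ for all $x\neq y$. Together with the already-established facts that $D_{J}\geq 0$, that $D_{J}(x,x)=0$, that $D_{J}$ is symmetric (inherited from each $D_{J}^{(N)}$), and that $D_{J}$ satisfies the triangle inequality (from the pseudometric proposition, which uses $D_{J}^{(n+m)}(x,y)\leq D_{J}^{(n)}(x,z)+D_{J}^{(m)}(z,y)$ and the passage to the limit), this shows $D_{J}$ is a metric on $X$.

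There is essentially no hard step here: the perfectness hypothesis is tailored precisely to sidestep the two failure modes seen in the earlier examples, namely $D_{J}^{(N)}(x,y)=\infty$ for all $N$ and a limit that degenerates to $0$. The one point worth stating carefully is why the stabilized value cannot be $0$ when $x\neq y$; this is exactly the observation that each $D_{J}^{(N)}$ separates points, which the text has already justified from the fact that any curve $f\in Path_{N}(x,y)$ with $L(f)=0$ is constant (so cannot join distinct endpoints), giving $D_{J}^{(N)}(x,y)>0$. I would therefore present the argument in two or three lines, citing the earlier semimetric remark for point-separation, the pseudometric proposition for the triangle inequality, and Definition \ref{perfect} for real-valuedness, and conclude that $D_{J}$ is a metric.
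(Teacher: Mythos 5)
Your argument is correct and is essentially the paper's own proof: the proposition follows immediately from the fact that each $D_{J}^{(N)}$ separates points, the already-established pseudometric property of $D_{J}$, and the stabilization $D_{J}^{(N)}(x,y)=D_{J}(x,y)<\infty$ for large $N$ built into the definition of a perfect quasimetric. One small caveat: your parenthetical justification of point-separation (``a curve with $L(f)=0$ is constant'') is not by itself sufficient, since the infimum defining $D_{J}^{(N)}(x,y)$ need not be attained in general; the clean reason, implicit in the paper's existence theorem, is that every $f\in Path_{N}(x,y)$ satisfies $J(x,y)\leq \sigma(J)^{N-1}L(f)$, whence $D_{J}^{(N)}(x,y)\geq J(x,y)/\sigma(J)^{N-1}>0$ whenever $x\neq y$.
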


When $J$ is indeed a metric on $X$, then for each $N$, the metric $%
D_{J}^{\left( N\right) }$ agrees with the intrinsic metric induced
by $J$. Thus, every metric space is automatically a perfect
quasimetric space. In section 4, we will discuss a family of very
important perfect quasimetric spaces, which are not metric spaces.

\begin{theorem}
\label{length_space}Suppose $\left( X,J\right) $ is a perfect
quasimetric space, and the geodesic problem \ref{geodesic_problem}
has solution for $N$ large enough. Then, $\left( X,D_{J}\right) $ is
a length space in the sense that for every $x,y\in X$, there exists
a curve $f:\left[ 0,L\right]
\rightarrow \left( X,D_{J}\right) $ such that $f\left( 0\right) =x$, $%
f\left( L\right) =y$ and
\begin{equation*}
D_{J}\left( f\left( t\right) ,f\left( s\right) \right) =\left| t-s\right|
\end{equation*}%
for every $t,s\in \left[ 0,L\right] $ where $L=D_{J}\left( x,y\right) $.
\end{theorem}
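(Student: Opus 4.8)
The plan is to produce the curve $f$ by first solving the $N$-piecewise geodesic problem for an $N$ large enough that $D_J^{(N)}(x,y)=D_J(x,y)$ (such an $N$ exists by the definition of a perfect quasimetric, and by hypothesis the geodesic problem \ref{geodesic_problem} then has a solution), and then reparametrizing it by arc length and checking that the geodesic distance $D_J$ restricted to the image is already linear along the curve. So the first step is to fix $x,y\in X$, set $L=D_J(x,y)$, choose $N$ with $D_J^{(N)}(x,y)=D_J(x,y)$ and with the geodesic problem solvable, obtain a length minimizer $g\in Path_N(x,y)$ with $L(g)=D_J^{(N)}(x,y)=L$, and then apply the Reparametrization proposition to get $f=g\circ\phi:[0,L]\to(X,J)$ in $\mathcal{P}_N([0,L],(X,J))$ with $|\dot f(t)|_J=1$ almost everywhere, so that $L\bigl(f\lfloor_{[s,t]}\bigr)=|t-s|$ for all $s\le t$ in $[0,L]$.

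The heart of the argument is then the identity $D_J(f(t),f(s))=|t-s|$. The inequality $D_J(f(t),f(s))\le |t-s|$ is the easy direction: the restriction $f\lfloor_{[s,t]}$, after a linear change of variable to $[0,1]$, lies in $Path_{N}(f(s),f(t))$ (it is a piecewise metric Lipschitz curve with at most $N$ pieces, since restricting a partition can only decrease the number of subintervals), so $D_J(f(s),f(t))\le D_J^{(N)}(f(s),f(t))\le L\bigl(f\lfloor_{[s,t]}\bigr)=|t-s|$. The reverse inequality is where the real work is, and I expect it to be the main obstacle. The natural approach is: suppose for contradiction that $D_J(f(t_0),f(s_0))<|t_0-s_0|$ for some $s_0<t_0$; then there is a path $h$ from $f(s_0)$ to $f(t_0)$, lying in some $Path_M(f(s_0),f(t_0))$, with $L(h)<|t_0-s_0|$. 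Concatenate $f\lfloor_{[0,s_0]}$, then $h$, then $f\lfloor_{[t_0,L]}$ to form a new curve $\tilde f$ from $x$ to $y$; this is a piecewise metric Lipschitz curve using at most $M+2N$ (or so) pieces, hence lies in some $Path_{N'}(x,y)$, and its length is $L(\tilde f)=s_0+L(h)+(L-t_0)<s_0+(t_0-s_0)+(L-t_0)=L=D_J(x,y)$, contradicting $D_J(x,y)=\lim_{N\to\infty}D_J^{(N)}(x,y)\le D_J^{(N')}(x,y)\le L(\tilde f)$. Here I am using additivity of length under concatenation of piecewise metric Lipschitz curves and the monotonicity $D_J(x,y)\le D_J^{(N')}(x,y)$.

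The subtle points to be careful about are the bookkeeping on the number of metric-Lipschitz pieces when concatenating (one must reparametrize $h$ and the two tails of $f$ onto adjacent subintervals of a common interval, and the total number of pieces stays finite, which is all that matters since $Path_{N'}$ appears only through $D_J=\lim_N D_J^{(N)}$), and the verification that concatenation of curves in $\mathcal{P}$ stays in $\mathcal{P}$ with length equal to the sum of lengths — this follows directly from the definition of $L_J$ as a sum of lengths of metric-Lipschitz restrictions. One should also note $f$ is continuous as a map into $(X,D_J)$ because $D_J\le D_J^{(N)}\le J$-length along short subarcs shows $D_J(f(t),f(s))\le|t-s|\to0$. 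Finally, taking $s=0$, $t=L$ recovers $D_J(x,y)=L$, confirming the endpoints and that the parametrization interval is exactly $[0,D_J(x,y)]$, so $(X,D_J)$ is a length space in the stated strong (geodesic) sense.
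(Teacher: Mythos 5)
Your proposal is correct, and its skeleton matches the paper's: pick $N$ large enough that $D_{J}^{(N)}(x,y)=D_{J}(x,y)$ and that a length minimizer exists, arc-parametrize it over $[0,L]$, and observe that every subarc gives the easy bound $D_{J}\left( f\left( s\right) ,f\left( t\right) \right) \leq L\left( f\lfloor _{\left[ s,t\right] }\right) =t-s$. Where you diverge is the reverse inequality. The paper gets it with no further construction: since $D_{J}$ is already known to be a pseudometric, it writes $L=D_{J}\left( x,y\right) \leq D_{J}\left( f\left( 0\right) ,f\left( s\right) \right) +D_{J}\left( f\left( s\right) ,f\left( t\right) \right) +D_{J}\left( f\left( t\right) ,f\left( L\right) \right) \leq s+\left( t-s\right) +\left( L-t\right) =L$ and concludes that all inequalities are equalities, so $D_{J}\left( f\left( s\right) ,f\left( t\right) \right) =t-s$. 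You instead argue by contradiction: if some subarc could be shortcut by a path $h$ of smaller length, you splice $f\lfloor _{\left[ 0,s_{0}\right] }$, $h$, $f\lfloor _{\left[ t_{0},L\right] }$ into a curve in some $Path_{N^{\prime }}\left( x,y\right) $ of length strictly below $D_{J}\left( x,y\right) $, contradicting $D_{J}\leq D_{J}^{\left( N^{\prime }\right) }$. This works, and your bookkeeping remarks (finitely many metric-Lipschitz pieces after concatenation, additivity of $L_{J}$, existence of a near-minimizing $h$ in some $Path_{M}$ since $D_{J}=\lim_{M}D_{J}^{\left( M\right) }$) are exactly the points that need checking; in effect you are re-proving by hand the concatenation inequality $D_{J}^{\left( n+m\right) }\left( x,y\right) \leq D_{J}^{\left( n\right) }\left( x,z\right) +D_{J}^{\left( m\right) }\left( z,y\right) $ that the paper has already packaged into the proposition that $D_{J}$ is a pseudometric. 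The paper's squeeze is shorter and avoids all piece-counting; your version is more self-contained at that step and makes the cut-and-paste mechanism explicit, at the cost of the extra verification.
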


\begin{proof}
For every $x,y\in X$, since $\left( X,J\right) $ is a perfect
quasimetric space, we have $D_{J}^{\left( N\right) }\left(
x,y\right) =D_{J}\left( x,y\right) <\infty $ whenever $N$ is large
enough. Now, for each large enough $N$, there exists a curve
$f:\left[ 0,L\right] \rightarrow \left( X,J\right) $ such that $f$
is the length minimizer in $Path_{N}\left( x,y\right) $ with
$L\left( f\right) =D_{J}^{\left( N\right) }\left( x,y\right)
=D_{J}\left( x,y\right) $. Without losing generality, we may assume
$f$ has its arc parametrization. Now for any $0\leq s<t\leq L$, we
have
\begin{equation*}
D_{J}\left( f\left( s\right) ,f\left( t\right) \right) \leq L\left( f\lfloor
_{\left[ s,t\right] }\right) =\int_{s}^{t}\left| \dot{f}\right| _{J}dt=t-s%
\text{.}
\end{equation*}%
Similarly, $D_{J}\left( f\left( 0\right) ,f\left( s\right) \right) \leq s$
and $D_{J}\left( f\left( t\right) ,f\left( L\right) \right) \leq L-t$. Thus,
we have
\begin{eqnarray*}
L &=&D_{J}\left( x,y\right) \leq D_{J}\left( f\left( 0\right) ,f\left(
s\right) \right) +D_{J}\left( f\left( s\right) ,f\left( t\right) \right)
+D_{J}\left( f\left( t\right) ,f\left( L\right) \right) \\
&\leq &s+\left( t-s\right) +\left( L-t\right) =L.
\end{eqnarray*}%
Therefore,\ all inequalities becomes equalities at every step and\ for any $%
t,s\in \left[ 0,L\right] $, we have $D_{J}\left( f\left( t\right) ,f\left(
s\right) \right) =\left| t-s\right| $ .
\end{proof}

\begin{corollary}
Suppose $J$ is a complete, continuous, perfect quasimetric on $X$. Then, $%
(X,D_J)$ is a length space.
\end{corollary}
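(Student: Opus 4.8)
The plan is to deduce this corollary directly from Theorem~\ref{length_space}, by checking that its two hypotheses are met under the assumptions here. Theorem~\ref{length_space} requires that $(X,J)$ be a perfect quasimetric space and that the geodesic problem~(\ref{geodesic_problem}) have a solution for $N$ large enough. The first of these is part of the present hypothesis, so the only work is to verify the second.

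First I would record what perfectness buys. By Definition~\ref{perfect}, for each pair $x,y\in X$ there is an $N_{0}=N_{0}(x,y)$ such that $D_{J}^{(N)}(x,y)=D_{J}(x,y)$ is a fixed real number for all $N\geq N_{0}$. In particular $D_{J}^{(N)}(x,y)<\infty$ for such $N$, which forces $Path_{N}(x,y)$ to be nonempty for every $N\geq N_{0}$: by definition, $D_{J}^{(N)}(x,y)=\infty$ whenever $Path_{N}(x,y)=\emptyset$.

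Next, since $J$ is complete and continuous, the existence theorem for the geodesic problem established earlier applies to each nonempty $Path_{N}(x,y)$: for every $N\geq N_{0}$ there is $f\in Path_{N}(x,y)$ with $L(f)=D_{J}^{(N)}(x,y)$. Thus the geodesic problem~(\ref{geodesic_problem}) does have a solution for all sufficiently large $N$, which is exactly the second hypothesis of Theorem~\ref{length_space}. Invoking that theorem then produces, for every $x,y\in X$, a curve $f\colon[0,L]\to(X,D_{J})$ with $L=D_{J}(x,y)$, $f(0)=x$, $f(L)=y$, and $D_{J}(f(t),f(s))=|t-s|$ for all $t,s$, i.e. $(X,D_{J})$ is a length space.

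There is no genuine obstacle: the corollary is a packaging of two already-proved results. The only point that needs attention is the bookkeeping observation that "perfect" simultaneously delivers finiteness of $D_{J}$ and nonemptiness of $Path_{N}(x,y)$ for large $N$, so that the completeness–continuity existence theorem can legitimately be applied and its output fed into Theorem~\ref{length_space}.
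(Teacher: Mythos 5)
Your argument is correct and is exactly the intended route: the paper states this corollary without proof, immediately after the existence theorem for the geodesic problem (continuous + complete) and Theorem \ref{length_space}, and your chain — perfectness gives finiteness of $D_{J}^{(N)}(x,y)$, hence nonemptiness of $Path_{N}(x,y)$ for large $N$, hence solvability of the geodesic problem, hence Theorem \ref{length_space} applies — is precisely the combination the paper relies on. The bookkeeping point you highlight (finiteness forcing nonemptiness) is the only step needing verification, and you handle it correctly.
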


The curve $f$ in the theorem \ref{length_space} is called a
\textit{geodesic} from $x$ to $y$ in the perfect quasimetric space
$\left( X,J\right) $.

\section{\protect\bigskip Optimal transport paths as geodesics}

We now begin to introduce a family of both ideal and perfect
quasimetrics on the space of atomic probability measures.

\subsection{\protect\bigskip A family of quasimetrics on the space of atomic
probability measures}

Let $\left( Y,d\right) $ be any metric space. For any $y\in Y$, let $\delta
_{y}$ be the Dirac measure centered at $y$. An atomic probability measure in
$Y$ is in the form of
\begin{equation*}
\sum_{i=1}^{m}a_{i}\delta _{y_{i}}
\end{equation*}%
with distinct points $y_{i}\in Y$, and $a_{i}>0$ with $\sum_{i=1}^{m}a_{i}=1$%
.

Given two atomic probability measures
\begin{equation}
\mathbf{a}=\sum_{i=1}^{m}a_{i}\delta _{x_{i}}\text{ and }\mathbf{b}%
=\sum_{j=1}^{n}b_{j}\delta _{y_{j}}  \label{atomic_measures}
\end{equation}%
in $Y$, a \textit{transport plan} from $\mathbf{a}$ to $\mathbf{b}$ is an
atomic probability measure
\begin{equation}
\gamma =\sum_{i=1}^{m}\sum_{j=1}^{n}\gamma _{ij}\delta _{\left(
x_{i},y_{j}\right) }  \label{transport_plan}
\end{equation}%
in the product space $Y\times Y$ such that
\begin{equation}
\sum_{i=1}^{m}\gamma _{ij}=b_{j}\text{ and }\sum_{j=1}^{n}\gamma _{ij}=a_{i}
\label{margins}
\end{equation}%
for each $i$ and $j$. Let $Plan\left( \mathbf{a},\mathbf{b}\right) $ be the
space of all transport plans from $\mathbf{a}$ to $\mathbf{b}$.

For any $\alpha <1$, we now introduce the functional $H_{\alpha }$ on
transport plans. For any atomic probability measure $\gamma $ in $Y\times Y$
of the form (\ref{transport_plan}), we define
\begin{equation*}
H_{\alpha }\left( \gamma \right) :=\sum_{i=1}^{m}\sum_{j=1}^{n}\left( \gamma
_{ij}\right) ^{\alpha }d\left( x_{i},y_{j}\right) ,
\end{equation*}%
where $d$ is the given metric on $Y$.

Using $H_{\alpha }$, we may define

\begin{definition}
For any two atomic probability measures $\mathbf{a},\mathbf{b}$ on $Y$, and $%
\alpha <1$, define
\begin{equation*}
J_{\alpha }\left( \mathbf{a},\mathbf{b}\right) :=\min \left\{ H_{\alpha
}\left( \gamma \right) :\gamma \in Plan\left( \mathbf{a},\mathbf{b}\right)
\right\} .
\end{equation*}
\end{definition}

For any given natural number $N\in \mathbb{N}$ , let $\mathcal{A}_{N}(Y)$ be
the space of all atomic probability measures
\begin{equation*}
\sum_{i=1}^{m}a_{i}\delta _{x_{i}}
\end{equation*}%
on $Y$ with $m\leq N$, and $\mathcal{A}\left( Y\right) =\bigcup_{N}\mathcal{A%
}_{N}\left( Y\right) $ be the space of all atomic probability measures on $Y$%
.

\begin{proposition}
$J_{\alpha }$ defines a quasimetric on $\mathcal{A}_{N}\left(
Y\right) $ with $\sigma \left( J_{\alpha }\right) \leq N^{1-\alpha
}$.
\end{proposition}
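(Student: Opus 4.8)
The plan is to check the four axioms of Definition~\ref{near_metric_def} for $J_\alpha$ on $\mathcal{A}_N(Y)$, treating the first three as routine and concentrating on the relaxed triangle inequality. Non-negativity of $J_\alpha$ is immediate, since $H_\alpha$ is a finite sum of non-negative terms over any transport plan (under the convention $0^\alpha:=0$ when $\alpha\le 0$), hence so is its minimum, which is also finite because $d$ is finite-valued. Symmetry follows from the fact that transposition $\gamma=\sum_{ij}\gamma_{ij}\delta_{(x_i,y_j)}\mapsto\sum_{ij}\gamma_{ij}\delta_{(y_j,x_i)}$ is a bijection $Plan(\mathbf a,\mathbf b)\to Plan(\mathbf b,\mathbf a)$ that preserves $H_\alpha$, by symmetry of $d$. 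For the identity of indiscernibles, the diagonal plan $\sum_i a_i\delta_{(x_i,x_i)}$ gives $J_\alpha(\mathbf a,\mathbf a)=0$; conversely, if $J_\alpha(\mathbf a,\mathbf b)=0$ then an optimal plan $\gamma$ has $H_\alpha(\gamma)=0$, so $\gamma_{ij}>0$ forces $d(x_i,y_j)=0$, i.e.\ $x_i=y_j$, and a routine argument using the distinctness of the $x_i$, the distinctness of the $y_j$, and the marginal conditions~(\ref{margins}) shows that the supports and weights of $\mathbf a$ and $\mathbf b$ agree, so $\mathbf a=\mathbf b$.

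The content of the proposition is the relaxed triangle inequality with constant $N^{1-\alpha}$. Given $\mathbf a=\sum_{i=1}^m a_i\delta_{x_i}$, $\mathbf b=\sum_{j=1}^n b_j\delta_{y_j}$, $\mathbf c=\sum_{k} c_k\delta_{z_k}$ in $\mathcal{A}_N(Y)$ (so $m,n\le N$), I would pick optimal plans $\gamma\in Plan(\mathbf a,\mathbf c)$ and $\eta\in Plan(\mathbf c,\mathbf b)$, giving $J_\alpha(\mathbf a,\mathbf c)=H_\alpha(\gamma)$ and $J_\alpha(\mathbf c,\mathbf b)=H_\alpha(\eta)$, and glue them through $\mathbf c$: set $\theta_{ikj}:=\gamma_{ik}\eta_{kj}/c_k$ (legitimate since $c_k>0$) and $\pi_{ij}:=\sum_k\theta_{ikj}$. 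A short computation with~(\ref{margins}) shows $\pi=\sum_{ij}\pi_{ij}\delta_{(x_i,y_j)}\in Plan(\mathbf a,\mathbf b)$, so $J_\alpha(\mathbf a,\mathbf b)\le H_\alpha(\pi)$, and it remains to estimate $H_\alpha(\pi)$.

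The key is the pointwise estimate
\[
\pi_{ij}^{\alpha}\, d(x_i,y_j)\ \le\ \sum_k \theta_{ikj}^{\alpha}\bigl(d(x_i,z_k)+d(z_k,y_j)\bigr),
\]
obtained by writing $d(x_i,y_j)=\pi_{ij}^{-1}\sum_k\theta_{ikj}\,d(x_i,y_j)$, applying the triangle inequality in $Y$ inside the sum, and using $\pi_{ij}\ge\theta_{ikj}$ together with $\alpha-1<0$ to get $\pi_{ij}^{\alpha-1}\theta_{ikj}\le\theta_{ikj}^{\alpha}$ (terms with $\theta_{ikj}=0$ causing no trouble). Summing over $i,j$ splits $H_\alpha(\pi)$ into a $\gamma$-part $\sum_{i,k}d(x_i,z_k)\gamma_{ik}^{\alpha}c_k^{-\alpha}\sum_j\eta_{kj}^{\alpha}$ and a symmetric $\eta$-part. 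Since there are at most $n\le N$ indices $j$ and $t\mapsto t^\alpha$ is concave on $[0,\infty)$ (for $0\le\alpha<1$), the power-mean (Jensen/H\"older) inequality gives $\sum_j\eta_{kj}^{\alpha}\le n^{1-\alpha}c_k^{\alpha}\le N^{1-\alpha}c_k^{\alpha}$, collapsing the $\gamma$-part to $N^{1-\alpha}H_\alpha(\gamma)$; symmetrically the $\eta$-part is at most $N^{1-\alpha}H_\alpha(\eta)$. Hence $J_\alpha(\mathbf a,\mathbf b)\le H_\alpha(\pi)\le N^{1-\alpha}\bigl(J_\alpha(\mathbf a,\mathbf c)+J_\alpha(\mathbf c,\mathbf b)\bigr)$, so $\sigma(J_\alpha)\le N^{1-\alpha}$.

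The main obstacle I anticipate is organizing the glued-plan estimate cleanly: getting the exponent bookkeeping right in the pointwise inequality (the interplay between monotonicity of $t\mapsto t^{\alpha-1}$ and concavity of $t\mapsto t^{\alpha}$) and applying the collapsing step to the correct index set so that exactly the factors $n^{1-\alpha}$ and $m^{1-\alpha}$, both $\le N^{1-\alpha}$, appear. A secondary point worth flagging is the range of $\alpha$: the concavity/power-mean step genuinely uses $0\le\alpha<1$, so for $\alpha<0$ one should either restrict to this range or argue separately, since $t\mapsto t^{\alpha}$ is no longer concave there.
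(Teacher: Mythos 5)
Your argument is correct and essentially the same as the paper's proof: the same composed plan $\gamma_{ij}=\sum_{k}u_{ik}\tau_{kj}/c_{k}$ glued through $\mathbf{c}$, the same pointwise step combining the triangle inequality in $Y$ with subadditivity of $t\mapsto t^{\alpha}$ (the paper's ``because $\alpha<1$''), and the same power-mean bound $\sum_{j}t_{j}^{\alpha}\leq N^{1-\alpha}\left(\sum_{j}t_{j}\right)^{\alpha}$ producing the factor $N^{1-\alpha}$. Your closing caveat about $\alpha<0$ is fair, but note it applies equally to the paper's own proof, which invokes the same power-mean inequality without restricting to $0\leq\alpha<1$.
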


\begin{proof}
For any $\mathbf{a},\mathbf{b}\in \mathcal{A}_{N}\left( Y\right) $ in the
form of (\ref{atomic_measures}), clearly $J_{\alpha }\left( \mathbf{a},%
\mathbf{b}\right) \geq 0$ and $J_{\alpha }\left( \mathbf{a},\mathbf{b}%
\right) =J_{\alpha }\left( \mathbf{b},\mathbf{a}\right) $.

If $J_{\alpha }\left( \mathbf{a},\mathbf{b}\right) =0$, then there exists a $%
\gamma \in Plan\left( \mathbf{a},\mathbf{b}\right) $ such that $H_{\alpha
}\left( \gamma \right) =0$. Thus, $d\left( x_{i},y_{j}\right) =0$ whenever $%
\gamma _{ij}\neq 0$. Since $\left\{ y_{j}\right\} $'s are distinct, at most
one of $\gamma _{ij}$ can be nonzero for each $i$. On the other hand, by (%
\ref{margins}), at least one of $\gamma _{ij}$ must be nonzero for each $i$.
Therefore, for each $i$, there is a unique $j=\sigma \left( i\right) $ such
that $x_{i}=y_{j}$ and $\gamma _{ij}=a_{i}=b_{j}$. This shows that $\mathbf{a%
}=\mathbf{b}$.

Now, we prove that $J$ satisfies the relaxed triangle inequality as in
condition \ref{condition_4} in Definition \ref{near_metric_def}. Indeed, for
any
\begin{equation*}
\mathbf{a}=\sum_{i=1}^{m}a_{i}\delta _{x_{i}}\text{, }\mathbf{b}%
=\sum_{j=1}^{n}b_{j}\delta _{y_{j}}\text{ and }\mathbf{c}%
=\sum_{k=1}^{h}c_{k}\delta _{z_{k}}
\end{equation*}%
in $\mathcal{A}_{N}\left( Y\right) $, and any
\begin{equation*}
u_{\mathbf{a}}^{\mathbf{c}}=\sum_{i=1}^{m}\sum_{k=1}^{h}u_{ik}\delta
_{\left( x_{i},z_{k}\right) }\in Path\left( \mathbf{a},\mathbf{c}\right)
\text{ and }\tau _{\mathbf{c}}^{\mathbf{b}}=\sum_{j=1}^{n}\sum_{k=1}^{h}\tau
_{kj}\delta _{\left( z_{k},y_{j}\right) }\in Path\left( \mathbf{c},\mathbf{b}%
\right) ,
\end{equation*}%
we denote
\begin{equation*}
\gamma _{ij}=\sum_{k=1}^{h}\frac{u_{ik}\tau _{kj}}{c_{k}}
\end{equation*}%
for each $i,j$. Note that%
\begin{equation*}
\sum_{i=1}^{m}\gamma _{ij}=\sum_{i=1}^{m}\left( \sum_{k=1}^{h}\frac{%
u_{ik}\tau _{kj}}{c_{k}}\right) =\sum_{k=1}^{h}\left( \sum_{i=1}^{m}\frac{%
u_{ik}\tau _{kj}}{c_{k}}\right) =\sum_{k=1}^{h}\tau _{kj}=b_{j}\text{ }
\end{equation*}%
and similarly $\sum_{j}\gamma _{ij}=a_{i}.$Therefore, we find a transport
plan%
\begin{equation*}
\gamma =\sum_{i=1}^{m}\sum_{j=1}^{n}\gamma _{ij}\delta _{\left(
x_{i},y_{j}\right) }\in Plan\left( \mathbf{a},\mathbf{b}\right) .
\end{equation*}%
We now want to show
\begin{equation*}
H_{\alpha }\left( \gamma \right) \leq N\left( H_{\alpha }\left( u_{\mathbf{a}%
}^{\mathbf{c}}\right) +H_{\alpha }\left( \tau _{\mathbf{c}}^{\mathbf{b}%
}\right) \right) .
\end{equation*}%
Indeed,
\begin{eqnarray*}
&&H_{\alpha }\left( \gamma \right) =\sum_{i=1}^{m}\sum_{j=1}^{n}\left(
\gamma _{ij}\right) ^{\alpha }d\left( x_{i},y_{j}\right)
=\sum_{i=1}^{m}\sum_{j=1}^{n}\left( \sum_{k=1}^{h}\frac{u_{ik}\tau _{kj}}{%
c_{k}}\right) ^{\alpha }d\left( x_{i},y_{j}\right)  \\
&\leq &\sum_{i=1}^{m}\sum_{j=1}^{n}\sum_{k=1}^{h}\left( \frac{u_{ik}\tau
_{kj}}{c_{k}}\right) ^{\alpha }\left( d\left( x_{i},z_{k}\right) +d\left(
z_{k},y_{j}\right) \right) ,\text{ because }\alpha <1 \\
&=&\sum_{i=1}^{m}\sum_{k=1}^{h}\left( \sum_{j=1}^{n}\left( \frac{u_{ik}\tau
_{kj}}{c_{k}}\right) ^{\alpha }\right) d\left( x_{i},z_{k}\right)
+\sum_{j=1}^{n}\sum_{k=1}^{h}\left( \sum_{i=1}^{m}\left( \frac{u_{ik}\tau
_{kj}}{c_{k}}\right) ^{\alpha }\right) d\left( z_{k},y_{j}\right)  \\
&\leq &N^{1-\alpha }\left( \sum_{i=1}^{m}\sum_{k=1}^{h}\left( u_{ik}\right)
^{\alpha }d\left( x_{i},z_{k}\right) +\sum_{j=1}^{n}\sum_{k=1}^{h}\left(
\tau _{kj}\right) ^{\alpha }d\left( z_{k},y_{j}\right) \right)  \\
&=&N^{1-\alpha }\left( H_{\alpha }\left( u_{\mathbf{a}}^{\mathbf{c}}\right)
+H_{\alpha }\left( \tau _{\mathbf{c}}^{\mathbf{b}}\right) \right) ,
\end{eqnarray*}%
where the 2nd inequality follows from the inequality $\sum_{i=1}^{N}\left(
t_{i}\right) ^{\alpha }\leq N^{1-\alpha }\left( \sum_{i=1}^{N}t_{i}\right)
^{\alpha }$. Therefore, by taking infimum, we have%
\begin{equation*}
J_{\alpha }\left( \mathbf{a},\mathbf{b}\right) \leq N^{1-\alpha }\left(
J_{\alpha }\left( \mathbf{a},\mathbf{c}\right) +J_{\alpha }\left( \mathbf{c},%
\mathbf{b}\right) \right) .
\end{equation*}
\end{proof}

\begin{proposition}
 Suppose $\left( Y,d\right) $ is a complete metric space. Then, $%
J_{\alpha }$ is a complete quasimetric on $\mathcal{A}_{N}\left(
Y\right) $.
\end{proposition}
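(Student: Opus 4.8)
The plan is to prove that every $J_{\alpha}$-Cauchy sequence $\{\mathbf{a}^{(k)}\}$ in $\mathcal{A}_{N}(Y)$ has a convergent subsequence, which (by the relaxed triangle inequality, as for ordinary metric spaces) suffices for completeness. First I would reduce by passing to subsequences: the number of atoms of $\mathbf{a}^{(k)}$ lies in $\{1,\dots,N\}$, so it may be taken constant, say $m\le N$, and we write $\mathbf{a}^{(k)}=\sum_{i=1}^{m}a_{i}^{(k)}\delta_{x_{i}^{(k)}}$; and the mass vectors lie in the compact simplex, so after a further subsequence $a_{i}^{(k)}\to a_{i}$ with $\sum_{i}a_{i}=1$. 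Put $I=\{i:a_{i}>0\}$ and $\mu=\min_{i\in I}a_{i}>0$; then $\sum_{i\notin I}a_{i}^{(k)}\to 0$, while for $i\in I$ and $k$ large $a_{i}^{(k)}>\mu/2$. Write $\tau_{n}=\sup_{k,l\ge n}J_{\alpha}(\mathbf{a}^{(k)},\mathbf{a}^{(l)})$, so $\tau_{n}\to 0$.

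The crucial estimate concerns the heavy atoms. For large $k,l$ fix an optimal plan $\gamma=\gamma^{(k,l)}$, so $H_{\alpha}(\gamma)=J_{\alpha}(\mathbf{a}^{(k)},\mathbf{a}^{(l)})\le\tau_{\min(k,l)}$. Since every entry of a transport plan is at most $1$, for any $\delta\in(0,1]$ an entry with $\gamma_{ij}\ge\delta$ obeys $d(x_{i}^{(k)},x_{j}^{(l)})\le\delta^{-|\alpha|}H_{\alpha}(\gamma)$, directly from $(\gamma_{ij})^{\alpha}\,d(x_{i}^{(k)},x_{j}^{(l)})\le H_{\alpha}(\gamma)$ together with $\alpha<1$. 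Now for $i\in I$ the $i$-th row of $\gamma$ carries total mass $a_{i}^{(k)}>\mu/2$, of which at most $\sum_{j\notin I}a_{j}^{(l)}<\mu/4$ reaches columns indexed outside $I$; hence more than $\mu/4$ is spread over the at most $N$ columns in $I$, and the pigeonhole principle produces $j=\varphi_{k,l}(i)\in I$ with $\gamma_{ij}>\mu/(4N)$ and therefore $d(x_{i}^{(k)},x_{\varphi_{k,l}(i)}^{(l)})\le C\,\tau_{\min(k,l)}$ with $C=C(N,\mu,\alpha)$. In particular, two distinct $i_{1},i_{2}\in I$ with $\varphi_{k,l}(i_{1})=\varphi_{k,l}(i_{2})$ force $d(x_{i_{1}}^{(k)},x_{i_{2}}^{(k)})\le 2C\,\tau_{\min(k,l)}\to 0$.

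This last remark suggests clustering. After finitely many further subsequence extractions I may assume that for each pair $i_{1},i_{2}\in I$ the quantity $d(x_{i_{1}}^{(k)},x_{i_{2}}^{(k)})$ either tends to $0$ or stays bounded away from $0$; by the triangle inequality in the genuine metric $(Y,d)$ the first alternative is an equivalence relation, with classes $I_{1},\dots,I_{r}$ ($r\le N$), vanishing within-class diameters, and uniformly separated distinct classes. Choosing a representative $x_{i_{s}}^{(k)}$ of each class and reading the estimate above at the level of classes, one gets for large $k,l$ a bijection $\Phi_{k,l}$ of $\{1,\dots,r\}$ with $d(x_{i_{s}}^{(k)},x_{i_{\Phi_{k,l}(s)}}^{(l)})\to 0$ uniformly in $s$. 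Fixing a reference index, pigeonholing over the finitely many bijections of $\{1,\dots,r\}$, relabelling accordingly, and applying the triangle inequality in $(Y,d)$ with constant $1$, a diagonal extraction gives a subsequence along which each $(x_{i_{s}}^{(k)})_{k}$ is Cauchy in $(Y,d)$. By completeness of $Y$, $x_{i_{s}}^{(k)}\to X_{s}$; the $X_{s}$ are pairwise distinct, $a_{i}^{(k)}\to a_{i}$, and setting $\mathbf{a}=\sum_{s=1}^{r}A_{s}\delta_{X_{s}}$ with $A_{s}=\sum_{i\in I_{s}}a_{i}$ we get $\sum_{s}A_{s}=\sum_{i\in I}a_{i}=1$, so $\mathbf{a}\in\mathcal{A}_{N}(Y)$.

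Finally, to see $J_{\alpha}(\mathbf{a}^{(k)},\mathbf{a})\to 0$ I would build near-optimal plans: transport the mass of each cluster $I_{s}$ of $\mathbf{a}^{(k)}$ onto $X_{s}$ — the displacements $d(x_{i}^{(k)},X_{s})$ tend to $0$ and the masses $a_{i}^{(k)}$ ($i\in I$) stay $\ge\mu/2$, so this portion of $H_{\alpha}$ tends to $0$ — and then repair the leftover mismatch, namely the cluster imbalances $|A_{s}^{(k)}-A_{s}|$ and the low-mass atoms $a_{j}^{(k)}$, $j\notin I$, all of which tend to $0$. The cluster-to-cluster repair moves vanishing mass over distances bounded by $\operatorname{diam}\{X_{1},\dots,X_{r}\}$, hence contributes $o(1)$; and applying the crucial estimate to $\gamma^{(k,l)}$ as $l\to\infty$ shows $(a_{j}^{(k)})^{\alpha}\,d(x_{j}^{(k)},\{X_{1},\dots,X_{r}\})\le\tau_{k}+o(1)\to 0$, so relocating each low-mass atom's mass to a nearby $X_{s}$ also costs $o(1)$; adding up yields $J_{\alpha}(\mathbf{a}^{(k)},\mathbf{a})\to 0$. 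The step I expect to be the main obstacle is the third paragraph: upgrading ``the optimal plan has a heavy entry'' to a coherent matching of atoms across the entire sequence — sorting the merging atoms into clusters, getting consistent bijections, and extracting one subsequence along which all chosen representatives converge in $(Y,d)$. A secondary delicate point, in the last step, is excluding the possibility that a low-mass atom drifts off so fast that relocating its mass is expensive; this is exactly where the Cauchy hypothesis, and not merely weak convergence, is needed.
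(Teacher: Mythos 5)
Your route --- pass to subsequences so the atom count and the mass vector stabilize, cluster the surviving heavy atoms, match clusters across the sequence via heavy entries of optimal plans, and finally exhibit an explicit near-optimal plan to the limit measure --- is genuinely different from the paper's, and for $0<\alpha<1$ it can be pushed through (the matching/cocycle step you flag is laborious but completable). The genuine gap is in your last step, and it is quantitative: you cost the ``repair'' of the cluster imbalances $|A_s^{(k)}-A_s|$ (and, implicitly, the residual mismatch left after relocating the light atoms) as ``vanishing mass over bounded distance, hence $o(1)$''. That estimate is valid only when $\alpha>0$. The proposition concerns $J_\alpha$ for every $\alpha<1$, and the paper explicitly allows $\alpha\le 0$ (see the remark following the definition of $\mathbf{M}_\alpha$). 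For $\alpha=0$ the cost of moving a nonzero, however small, amount of mass between two distinct limit points $X_s\neq X_{s'}$ is at least $d(X_s,X_{s'})$, and for $\alpha<0$ it blows up as the transported mass tends to $0$; so $|A_s^{(k)}-A_s|\to 0$ does not make this contribution small. To close this for $\alpha\le 0$ you would need to show that the Cauchy hypothesis forces the cluster masses to stabilize exactly, or build the comparison plan so that no imbalance is ever shipped between clusters; neither is addressed. (Your light-atom bound, by contrast, can be salvaged for all $\alpha<1$, since one can extract $(a_j^{(k)})^{\alpha}\,d\bigl(x_j^{(k)},\{X_1,\dots,X_r\}\bigr)\le C\,\tau_k$ directly from the optimal plans.)

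The paper avoids all of this by taking limits of the optimal plans themselves rather than of matched atoms: fixing $n$, it extracts (a diagonal argument over the at most $N\times N$ entries) limits $\gamma_{ij}^{(n,m)}\to\gamma_{ij}^{(n)}$ as $m\to\infty$; for each column $j$ with $a_j=\lim_m a_j^{(m)}>0$ some entry stays bounded below, which makes $\{x_j^{(m)}\}_m$ Cauchy in $(Y,d)$, hence convergent to some $x_j$; and the limit matrix is automatically a transport plan from $\mathbf{a}_n$ to $\mathbf{a}=\sum_{a_j>0}a_j\delta_{x_j}$ with $J_\alpha(\mathbf{a}_n,\mathbf{a})\le H_\alpha(\gamma^{(n)})\le\liminf_m J_\alpha(\mathbf{a}_n,\mathbf{a}_m)\le\epsilon$. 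Because the marginals of the limiting plan match exactly, there is no repair step, no clustering, and no matching of atoms, and the argument is uniform in $\alpha<1$. Adopting that device in your final step would both remove the $\alpha\le 0$ obstruction and make most of your third paragraph unnecessary.
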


\begin{proof}
Let $\left\{ \mathbf{a}_{n}\right\} $ be any Cauchy sequence in $\mathcal{A}%
_{N}\left( Y\right) $. Then, for any $\epsilon >0$, there exists a
natural number $\tilde{N}$, such that
\[
J_{\alpha }\left( \mathbf{a}_{n},\mathbf{a}_{m}\right) \leq \epsilon
\]%
whenever $n,m\geq \tilde{N}$. Note that each atomic probability measure $%
\mathbf{a}_{n}$ may be expressed as
\[
\mathbf{a}_{n}=\sum_{i=1}^{N}a_{i}^{\left( n\right) }\delta
_{x_{i}^{\left( n\right) }}
\]%
for some $a_{i}^{\left( n\right) }\geq 0$,
$\sum_{i=1}^{N}a_{i}^{\left( n\right) }=1$ and $x_{i}^{\left(
n\right) }\in Y$.

Now, let $\gamma ^{\left( n,m\right) }$ be an $H_{\alpha }$ minimizer in $%
Plan\left( \mathbf{a}_{n},\mathbf{a}_{m}\right) $ with
\[
J_{\alpha }\left( \mathbf{a}_{n},\mathbf{a}_{m}\right) =H_{\alpha
}\left( \gamma ^{\left( n,m\right) }\right) \text{.}
\]%
This transport plan $\gamma ^{\left( n,m\right) }$ is expressed as
\[
\gamma ^{\left( n,m\right) }=\sum_{i,j=1}^{N}\gamma _{ij}^{\left(
n,m\right) }\delta _{\left( x_{i}^{\left( n\right) },x_{j}^{\left(
m\right) }\right) }
\]%
for some $\gamma _{ij}^{\left( n,m\right) }\geq 0$ with $\sum_{i=1}^{N}%
\gamma _{ij}^{\left( n,m\right) }=a_{j}^{\left( m\right) }$ and $%
\sum_{j=1}^{N}\gamma _{ij}^{\left( n,m\right) }=a_{i}^{\left(
n\right) }$ for all $i,j=1,2,\cdots ,N$.

By picking a subsequence if necessary, without lossing generality,
we may use the diagonal argument and assume that for all
$i,j=1,2,\cdots ,N$ and
all $n\geq \tilde{N}$%
\[
\gamma _{ij}^{\left( n,m\right) }\rightarrow \gamma _{ij}^{\left(
n\right) }
\]%
as $m\rightarrow \infty $. Then, for each $i,j$ and each $n\geq
\tilde{N}$, we have
\begin{equation}
\sum_{i=1}^{N}\gamma _{ij}^{\left( n\right) }=\lim_{m\rightarrow
\infty }\sum_{i=1}^{N}\gamma _{ij}^{\left( n,m\right)
}=\lim_{m\rightarrow \infty }a_{j}^{\left( m\right) }\text{ and
}\sum_{j=1}^{N}\gamma _{ij}^{\left( n\right) }=a_{i}^{\left(
n\right) }.  \label{margin_limit}
\end{equation}%
Let
\[
a_{j}=\lim_{m\rightarrow \infty }a_{j}^{\left( m\right) }
\]%
for each $j$. If $a_{j}>0$, then by (\ref{margin_limit}), there
exists an $i$ such that $\gamma _{ij}^{\left( n\right) }>0$. So
\[
d\left( x_{i}^{\left( n\right) },x_{j}^{\left( m\right) }\right) \leq \frac{%
H_{\alpha }\left( \gamma ^{\left( n,m\right) }\right) }{\left[
\gamma _{ij}^{\left( n,m\right) }\right] ^{\alpha }}=\frac{J_{\alpha
}\left( \mathbf{a}_{n},\mathbf{a}_{m}\right) }{\left[ \gamma
_{ij}^{\left( n,m\right) }\right] ^{\alpha }}
\]%
which implies that
\[
\left\{ x_{j}^{\left( m\right) }\right\} _{m=1}^{\infty }
\]%
is a Cauchy sequence in the complete metric space $\left( Y,d\right)
$. Thus, $x_{j}^{\left( m\right) }\rightarrow x_{j}$ as
$m\rightarrow \infty $ for some $x_{j}\in Y$.

Let
\[
\mathbf{a}=\sum_{a_{j}>0}a_{j}\delta _{x_{j}}\in
\mathcal{A}_{N}\left( Y\right)
\]%
and for each $n\geq \tilde{N}$, let
\[
\gamma ^{\left( n\right) }=\sum_{ij}\gamma _{ij}^{\left( n\right)
}\delta _{\left( x_{i}^{\left( n\right) },x_{j}\right) }.
\]%
Then, $\gamma ^{\left( n\right) }\in Plan\left( \mathbf{a}_{n},\mathbf{a}%
\right) $ $\ $and
\begin{eqnarray*}
J_{\alpha }\left( \mathbf{a}_{n},\mathbf{a}\right)  &\leq &H_{\alpha
}\left(
\gamma ^{\left( n\right) }\right)  \\
&=&\sum_{ij}\left[ \gamma _{ij}^{\left( n\right) }\right] ^{\alpha
}d\left(
x_{i}^{\left( n\right) },x_{j}\right)  \\
&=&\lim_{m\rightarrow \infty }\sum_{ij}\left[ \gamma _{ij}^{\left(
n,m\right) }\right] ^{\alpha }d\left( x_{i}^{\left( n\right)
},x_{j}^{\left(
m\right) }\right)  \\
&=&\lim_{m\rightarrow \infty }J_{\alpha }\left( \mathbf{a}_{n},\mathbf{a}%
_{m}\right) \leq \epsilon .
\end{eqnarray*}%
Therefore, $\left\{ \mathbf{a}_{n}\right\} $ is (subsequentially)
convergent
to $\mathbf{a}$ in $\left( \emph{A}_{N}\left( Y\right) ,J_{\alpha }\right) $%
. This shows that $\mathcal{A}_{N}\left( Y\right) $ is complete with
respect to the quasimetric $J_{\alpha }$.
\end{proof}
Note that, in general, $J_{\alpha }$ may fail to be a metric on $\mathcal{A}%
_{N}\left( Y\right) $ as demonstrated in the following example.

\begin{example}
For any $\alpha <1$, let $y$ be a positive real number. Then, we consider
three atomic measures in $Y=\mathbb{R}^{2}:$
\begin{equation*}
\mathbf{a}=\frac{1}{2}\delta _{\left( -1,y+1\right) }+\frac{1}{2}\delta
_{\left( 1,y+1\right) },\mathbf{b}=\delta _{\left( 0,0\right) }\text{ and }%
\mathbf{c}=\delta _{\left( 0,y\right) }\text{.}
\end{equation*}%
Then,
\begin{eqnarray*}
&&J_{\alpha }\left( \mathbf{a},\mathbf{c}\right) +J_{\alpha }\left( \mathbf{c%
},\mathbf{b}\right) -J_{\alpha }\left( \mathbf{a},\mathbf{b}\right) \\
&=&2\left( \frac{1}{2}\right) ^{\alpha }\sqrt{2}+y-2\left( \frac{1}{2}%
\right) ^{\alpha }\sqrt{1+\left( y+1\right) ^{2}}<0
\end{eqnarray*}%
whenever $y$ is large enough. Thus,\thinspace $J_{\alpha }$ does not satisfy
the triangle inequality.
\end{example}

\subsection{Optimal transport paths between atomic probability measures}

Now, we want to show that the quasimetric $J_{\alpha }$ is both
ideal and perfect. To achieve these results, we first recall some
concepts about
optimal transport paths between probability measures as studied in \cite%
{xia1}.

Let $\mathbf{a}$ and $\mathbf{b}$ be two fixed atomic probability measures
in the form of (\ref{atomic_measures}).

\begin{definition}
A \textit{transport path} from $\mathbf{a}$ to $\mathbf{b}$ is a weighted
directed graph $G$ consists of a vertex set $V\left( G\right) $, a directed
edge set $E\left( G\right) $ and a weight function
\begin{equation*}
w:E\left( G\right) \rightarrow \left( 0,+\infty \right)
\end{equation*}%
such that $\left\{ x_{1,}x_{2,\cdots ,}x_{k}\right\} \cup \left\{
y_{1},y_{2},\cdots ,y_{l}\right\} \subset V\left( G\right) $ and for any
vertex $v\in V\left( G\right) ,$%
\begin{equation}
\sum_{\substack{ e\in E\left( G\right)  \\ e^{-}=v}}w\left( e\right) =\sum
_{\substack{ e\in E\left( G\right)  \\ e^{+}=v}}w\left( e\right) +\left\{
\begin{array}{cc}
a_{i}, & \text{if }v=x_{i}\text{ for some }i=1,\cdots ,k \\
-b_{j}, & \text{if }v=y_{j}\text{ for some }j=1,\cdots ,l \\
0, & \text{otherwise}%
\end{array}%
\right.  \label{balance_equation}
\end{equation}%
where $e^{-}$ and $e^{+}$denotes the starting and ending endpoints of each
edge $e\in E\left( G\right) $.
\end{definition}

\begin{remark}
The balance equation (\ref{balance_equation}) simply means that the total
mass flows into $v$ equals to the total mass flows out of $v$. When $G$ is
viewed as a polyhedral chain or current, (\ref{balance_equation}) can be
simply expressed as
\begin{equation*}
\partial G=\mathbf{b}-\mathbf{a}\text{.}
\end{equation*}%
Also, when $G$ is viewed as a vector valued measure, the balance equation is
simply
\begin{equation*}
div\left( G\right) =\mathbf{a}-\mathbf{b}
\end{equation*}%
in the sense of distributions.
\end{remark}

Let $Path(\mathbf{a},\mathbf{b})$ be the space of all transport paths from $%
\mathbf{a}$ to $\mathbf{b}$.

\begin{definition}
For any $\alpha \leq 1$, and any $G\in Path(\mathbf{a},\mathbf{b})$, define
\begin{equation*}
\mathbf{M}_{\alpha }\left( G\right) :=\sum_{e\in E\left( G\right) }w\left(
e\right) ^{\alpha }length\left( e\right) .
\end{equation*}
\end{definition}

\begin{remark}
In \cite{xia1}, the parameter $\alpha$ was restricted in $[0,1]$. Later, the
author observed that $\alpha<0$ is also very interesting, and related to
studying the dimension of fractals. So, negative $\alpha$ is also allowed
here.
\end{remark}

We first recite two lemmas that were proved in \cite[Proposition 2.1]{xia1}
and \cite[Definition 7.1 and Lemma 7.1]{xia1} respectively.

\begin{lemma}
\label{no_loops} For any transport path $G\in Path\left( \mathbf{a},\mathbf{b%
}\right) $, there exists another transport path $\tilde{G}\in Path\left(
\mathbf{a},\mathbf{b}\right) $ such that
\begin{equation*}
\mathbf{M}_{\alpha }\left( \tilde{G}\right) \leq \mathbf{M}_{\alpha }\left(
G\right) ,
\end{equation*}%
the set of vertices $V\left( \tilde{G}\right) \subset V\left( G\right) $ and
$\tilde{G}$ contains no cycles.
\end{lemma}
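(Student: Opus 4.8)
The plan is to reduce a general transport path $G$ to one without cycles by a standard mass-cancellation argument along cycles, carefully tracking the effect on $\mathbf{M}_\alpha$. First I would observe that it suffices to remove cycles one at a time: if $G$ has a cycle, I will produce a transport path $G'\in Path(\mathbf{a},\mathbf{b})$ with $\mathbf{M}_\alpha(G')\le\mathbf{M}_\alpha(G)$, with $V(G')\subseteq V(G)$, and with strictly fewer edges (or at least a strictly smaller total weight on some measure of complexity) so that iterating terminates after finitely many steps in an acyclic $\tilde G$. Since $E(G)$ is finite, an induction on $|E(G)|$ organizes the whole argument cleanly.

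Next I would set up the single-cycle reduction. Suppose $C$ is a cycle in the underlying undirected graph of $G$, with edges $e_1,\dots,e_r$; orient $C$ and let $w_k=w(e_k)$, with sign $\varepsilon_k=+1$ if $e_k$ agrees with the orientation of $C$ and $\varepsilon_k=-1$ otherwise. For a parameter $t$, define a modified weight $w_k(t)=w_k-\varepsilon_k t$ on the edges of $C$ (deleting an edge whose weight reaches $0$), leaving all other edges untouched. Because the net change at every vertex of $C$ is zero, the balance equation (\ref{balance_equation}) is preserved for every $t$, so $G(t)\in Path(\mathbf{a},\mathbf{b})$ as long as all weights stay nonnegative; this holds for $t$ in a closed interval $[t_-,t_+]$ whose endpoints are the first values at which some $w_k(t)$ hits $0$. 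The $\mathbf{M}_\alpha$-cost along $C$ is $\phi(t)=\sum_{k=1}^r (w_k-\varepsilon_k t)^\alpha\,\mathrm{length}(e_k)$, and the cost of the rest of $G$ is constant in $t$.

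The key analytic point is that $\phi$ is concave in $t$ on $[t_-,t_+]$: each term $(w_k-\varepsilon_k t)^\alpha$ is concave in $t$ when $\alpha\le 1$ (for $0<\alpha\le 1$ it is concave as a composition of the concave increasing map $s\mapsto s^\alpha$ with an affine map; for $\alpha\le 0$ and for $\alpha=1$ one checks the second derivative directly, and the degenerate case $w_k(t)\equiv$ a boundary value is handled by the endpoint). A concave function on a compact interval attains its minimum at an endpoint, so either $\phi(t_-)\le\phi(0)$ or $\phi(t_+)\le\phi(0)$; choosing that endpoint gives $\mathbf{M}_\alpha(G(t_\pm))\le\mathbf{M}_\alpha(G)$, and at that endpoint at least one edge of $C$ has been deleted, so $C$ is broken and the edge count drops. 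Removing isolated vertices created in the process keeps $V(\tilde G)\subseteq V(G)$, and since the $x_i$ and $y_j$ carry nonzero net mass they are never isolated, so they remain in $V(\tilde G)$.

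I expect the main obstacle to be the bookkeeping around signs and the case $\alpha<0$: when $\alpha<0$ the function $s\mapsto s^\alpha$ blows up as $s\to 0^+$, so I must be sure the admissible interval $[t_-,t_+]$ is chosen so that no edge weight actually reaches $0$ in the interior, i.e. the first edge to vanish does so exactly at an endpoint, and concavity must be invoked on the closed interval where every $w_k(t)>0$ in the interior with possible vanishing only at $t_-$ or $t_+$; at such an endpoint the vanishing term contributes $0$ to $\phi$ by the convention that a deleted edge contributes nothing, which is consistent with $\lim_{s\to 0^+}s^\alpha\,\ell=+\infty$ only if $\ell>0$, so I should also note that a cycle edge of zero length can be deleted outright at no cost. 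Handling these degeneracies carefully — deleting zero-length edges first, then running the concavity argument on genuinely positive-weight, positive-length cycle edges — is the delicate part; everything else is routine.
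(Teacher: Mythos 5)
Your approach is the standard cycle-cancellation argument, and for $0\leq\alpha\leq 1$ it is correct; in fact the paper does not prove this lemma at all but recites it from \cite[Proposition 2.1]{xia1}, and the proof there is exactly your perturbation-along-a-cycle, concavity, minimum-at-an-endpoint argument (carried out in \cite{xia1} under the restriction $\alpha\in[0,1]$). So for that range your proposal reproduces the intended proof: the balance equation is preserved for all admissible $t$, $\phi(t)=\sum_k (w_k-\varepsilon_k t)^{\alpha}\,\mathrm{length}(e_k)$ is concave, hence minimized at $t_-$ or $t_+$, an edge weight vanishes there, and induction on the number of edges finishes the argument with $V(\tilde G)\subset V(G)$.

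The genuine gap is your treatment of $\alpha<0$, which this paper explicitly includes (the definition of $\mathbf{M}_\alpha$ allows all $\alpha\leq 1$ and the remark after it stresses that negative $\alpha$ is allowed). Your claim that each term $(w_k-\varepsilon_k t)^{\alpha}$ is concave for $\alpha\leq 0$ is false for $\alpha<0$: the second derivative in $t$ is $\alpha(\alpha-1)(w_k-\varepsilon_k t)^{\alpha-2}$, and $\alpha(\alpha-1)>0$ when $\alpha<0$, so each term, and hence $\phi$, is \emph{convex}. A convex function on $[t_-,t_+]$ attains its minimum in the interior in general, not at an endpoint, and indeed for $\alpha<0$ the cost of a cycle edge of positive length blows up to $+\infty$ as its weight tends to $0$, so pushing $t$ to the endpoint where an edge dies can strictly increase the cost along the way; the discontinuous drop obtained by the convention that a deleted edge contributes $0$ does not give you $\phi(t_\pm)\leq\phi(0)$, which is exactly the inequality your induction needs. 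Your final paragraph acknowledges the blow-up but only addresses zero-length edges and endpoint bookkeeping; it does not repair this logical step. So as written the proof establishes the lemma only for $0\leq\alpha\leq 1$ (matching the cited source), and the case $\alpha<0$ requires a different idea — simple circulation removal also fails there, since lowering a weight raises $w^{\alpha}$ — or else the lemma must be restricted to $\alpha\in[0,1]$ as in \cite{xia1}.
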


Here, a weighted directed graph $G=\left\{ V\left( G\right) ,E\left(
G\right) ,W:E\left( G\right) \rightarrow (0,1]\right\} $ contains a \textit{%
cycle} if for some $k\geq 3$, there exists a list of distinct vertices $%
\left\{ v_{1},v_{2},\cdots ,v_{k}\right\} $ in $V\left( G\right) $ such that
for each $i=1,\cdots ,k$, either\ the segment $\left[ v_{i},v_{i+1}\right] $
or $\left[ v_{i+1},v_{i}\right] $ is a directed edge in $E(G)$, with the
agreement that $v_{k+1}=v_{1}$. When a directed graph $G$ contains no
cycles, it becomes a directed tree.

\begin{lemma}
\label{decomposition}For any transport path $G\in Path\left( \mathbf{a},%
\mathbf{b}\right) $ containing no cycles, there exists

\begin{enumerate}
\item an $m\times n$ real matrix
\begin{equation*}
u=\left( u_{ij}\right) \text{ with }
\end{equation*}%
\begin{equation*}
u_{ij}\geq 0,\sum_{i=1}^{m}u_{ij}=b_{j}\text{,}\sum_{j=1}^{n}u_{ij}=a_{i}%
\text{ for each\thinspace }i,j\text{ and}\sum_{i=1}^{m}%
\sum_{j=1}^{n}u_{ij}=1,
\end{equation*}

\item and an $m\times n$ matrix
\begin{equation*}
g=\left( g_{ij}\right)
\end{equation*}%
with each $g_{ij}$ is either $0$ or an oriented polyhedral curve $g_{ij}$
from $x_{i}$ to $y_{j}$,
\end{enumerate}

such that
\begin{equation*}
G=\sum_{i,j}u_{ij}g_{ij}
\end{equation*}%
as real coefficients polyhedral chains.
\end{lemma}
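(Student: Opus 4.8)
The plan is to prove the lemma by induction on the number of edges $|E(G)|$, performing at each step one \emph{path extraction}. Since $G$ contains no cycles, its underlying undirected graph is a forest, so between any two vertices there is \emph{at most one} undirected path; this uniqueness is what will let the contributions of different extraction steps be bundled consistently into a single matrix $g=(g_{ij})$. It is convenient to prove the slightly more general statement in which $\mathbf a$ and $\mathbf b$ are allowed to be finite atomic measures with the same (arbitrary) total mass $M$ rather than probability measures, concluding $\sum_{i,j}u_{ij}=M$; the lemma is the case $M=1$. The base case $E(G)=\emptyset$ forces, via the balance equation (\ref{balance_equation}), that $\mathbf a=\mathbf b=0$, and one takes $u=0$, $g=0$.

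For the inductive step, the first task is to produce a directed polyhedral curve $\gamma$ inside $G$ running from a source $x_i$ having \emph{no incoming edge} to a sink $y_j$ having \emph{no outgoing edge}. Starting from any source (one exists since $E(G)\neq\emptyset$ forces $M>0$), I would walk \emph{backwards} along incoming edges; the balance equation (\ref{balance_equation}) shows this walk can only stop at a vertex with no incoming edge, and such a vertex, having an outgoing edge, must be a source $x_i$. From $x_i$ I would then walk \emph{forwards} along outgoing edges; again by (\ref{balance_equation}) the walk can only stop at a vertex with no outgoing edge, which must then be a sink $y_j$. Acyclicity guarantees neither walk repeats a vertex, so $\gamma$ is a simple directed path from $x_i$ to $y_j$, and it is \emph{the} unique undirected $x_i$--$y_j$ path in $G$. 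The point of choosing the endpoints this way is that the weight of the first edge out of $x_i$ is then at most $a_i$, and the weight of the last edge into $y_j$ is at most $b_j$.

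Next, set $w_0:=\min\{w(e):e\in\gamma\}>0$ and $G':=G-w_0\,\gamma$ as a real polyhedral chain, discarding any edge whose weight drops to $0$. Then (i) $G'$ has strictly fewer edges than $G$, since the minimizing edge disappears; (ii) $G'$ still contains no cycles; and (iii) because $\partial\gamma=\delta_{y_j}-\delta_{x_i}$ one gets $\partial G'=(\mathbf b-\mathbf a)-w_0(\delta_{y_j}-\delta_{x_i})$, so $G'$ is a transport path between $\mathbf a-w_0\delta_{x_i}$ and $\mathbf b-w_0\delta_{y_j}$, which are genuine atomic measures with nonnegative coefficients of common mass $M-w_0$ precisely because $w_0\le a_i$ and $w_0\le b_j$. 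Applying the inductive hypothesis to $G'$ yields $G'=\sum_{i,j}u'_{ij}g'_{ij}$ with $g'_{ij}$ the unique directed $x_i$--$y_j$ path whenever $u'_{ij}>0$. I would then set $u_{ij}:=u'_{ij}+w_0$ and $g_{ij}:=\gamma$ for the pair $(i,j)$ just used, and $u_{ij}:=u'_{ij}$, $g_{ij}:=g'_{ij}$ otherwise; forest uniqueness forces $g'_{ij}=\gamma$ whenever $u'_{ij}>0$ for that pair, so the assignment is consistent and $G=G'+w_0\gamma=\sum_{i,j}u_{ij}g_{ij}$.

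Finally I would check the bookkeeping: propagating the marginal identities through the induction gives $\sum_i u_{ij}=b_j$, $\sum_j u_{ij}=a_i$, and $\sum_{i,j}u_{ij}=M$ (hence $=1$ for the lemma). The main obstacle is the first task of the inductive step --- arranging an honest directed simple path from a source with no incoming edge to a sink with no outgoing edge --- because it is what simultaneously guarantees termination (an edge is deleted at every step) and keeps the two prescribed measures nonnegative along the recursion; once that is secured, everything else is routine flow accounting, and acyclicity does the rest by making the extracted curves canonical.
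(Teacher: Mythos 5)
The paper itself does not prove Lemma \ref{decomposition}: it is recited from \cite{xia1} (Definition 7.1 and Lemma 7.1 there), so there is no internal argument to measure yours against, and your proposal should be judged on its own terms. Judged that way, your path-extraction induction is a correct and essentially complete reconstruction. The key points all check out: acyclicity makes the underlying undirected graph a forest; the backward and forward walks terminate by finiteness and, by the balance equation (\ref{balance_equation}), can only stop at an atom of $\mathbf{a}$ with no incoming edge (respectively an atom of $\mathbf{b}$ with no outgoing edge); at such endpoints the first (last) edge weight is at most $a_i$ (at most $b_j$), so $w_0\leq\min\{a_i,b_j\}$ and the reduced measures stay nonnegative; the minimizing edge disappears, so the induction terminates; and forest uniqueness of the $x_i$--$y_j$ path, together with the fact that every extracted curve traverses edges in their given orientation, makes the bundling into a single matrix $g=(g_{ij})$ consistent. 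The marginal bookkeeping then gives $\sum_i u_{ij}=b_j$, $\sum_j u_{ij}=a_i$ and $\sum_{i,j}u_{ij}=1$ as required. This is the standard acyclic flow decomposition and is a legitimate self-contained proof of the statement the paper only quotes.

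One small correction: your base case is stated too strongly. Nothing in the lemma (or in the definition of transport path) forbids the supports of $\mathbf{a}$ and $\mathbf{b}$ from intersecting, and at a shared atom $v=x_i=y_j$ the balance equation reads outflow $=$ inflow $+\,a_i-b_j$. Consequently $E(G)=\emptyset$ only forces $\mathbf{a}=\mathbf{b}$, not $\mathbf{a}=\mathbf{b}=0$; in your generalized induction the reduced measures can indeed be equal and nonzero when the edges run out. The fix is immediate: in the base case take $u_{ij}=a_i$ whenever $x_i=y_j$ and $g_{ij}=0$ (the lemma explicitly allows $g_{ij}=0$). Your inductive step already survives shared atoms, since at the terminal vertex of the backward walk the outflow equals $a_i-b_j\leq a_i$, so the bound $w_0\leq a_i$ (and symmetrically $w_0\leq b_j$) persists. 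With that adjustment the argument is complete.
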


By means of lemma \ref{no_loops}, it is easy to see that for each $\alpha
\leq 1$, there exists an optimal transport path in $Path\left( \mathbf{a},%
\mathbf{b}\right) $ which minimizes the cost functional $\mathbf{M}_{\alpha
} $.

For the sake of visualization we provide some numerical simulations (see the
forthcoming paper \cite{xia6}) for different values of $\alpha$.

\begin{example}
Let $\left\{ x_{i}\right\} $ be 50 random points in the square $%
\left[ 0,1\right] \times \left[ 0,1\right] $. Then, $\left\{ x_{i}\right\} $
determines an atomic probability measure
\begin{equation*}
\mathbf{a}=\sum_{i=1}^{50}\frac{1}{50}\delta _{x_{i}}.
\end{equation*}%
Let $\mathbf{b}=\delta _{O}$ where $O=\left( 0,0\right) $ is the origin.
Then an optimal transport path from $\mathbf{a}$ to $\mathbf{b}$ looks like
the following figures with $\alpha=1, 0.75, 0.5$ and $0.25$ respectively:
\end{example}
\includegraphics[height=1.75in]{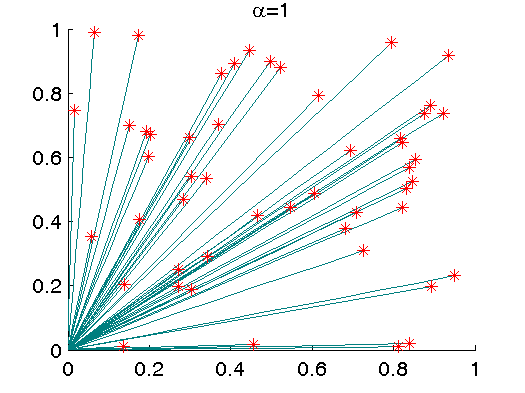}%
\includegraphics[height=1.75in]{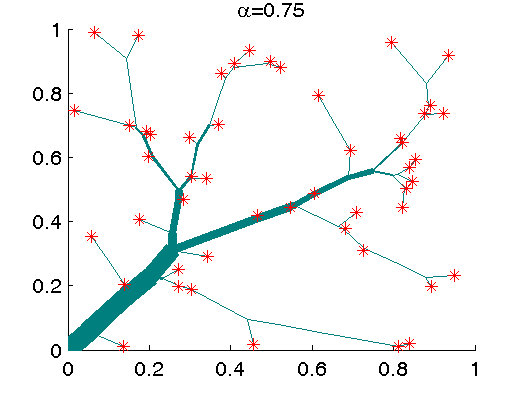}

\includegraphics[height=1.75in]{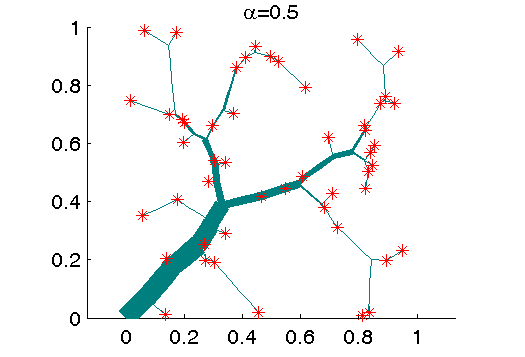}%
\includegraphics[height=1.75in]{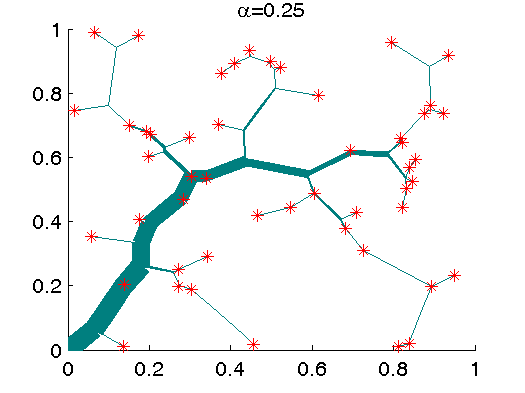}

\begin{example}
Let $\left\{ x_{i}\right\} $ be 100 random points in the rectangle
$\left[ -2.5,2.5\right] \times \left[ 0,1\right] $. Then, $\left\{
x_{i}\right\} $
determines an atomic probability measure \[\mathbf{a}=\sum_{i=1}^{100}\frac{1%
}{100}\delta _{x_{i}}.\] Let $\mathbf{b}=\delta _{O}$ where
$O=\left( 0,0\right) $ is the origin, and let $\alpha =0.85$. Then
an optimal transport path from $\mathbf{a}$ to $\mathbf{b}$ looks
like the following figure.
\end{example}
\begin{center}
\includegraphics[width=5in]{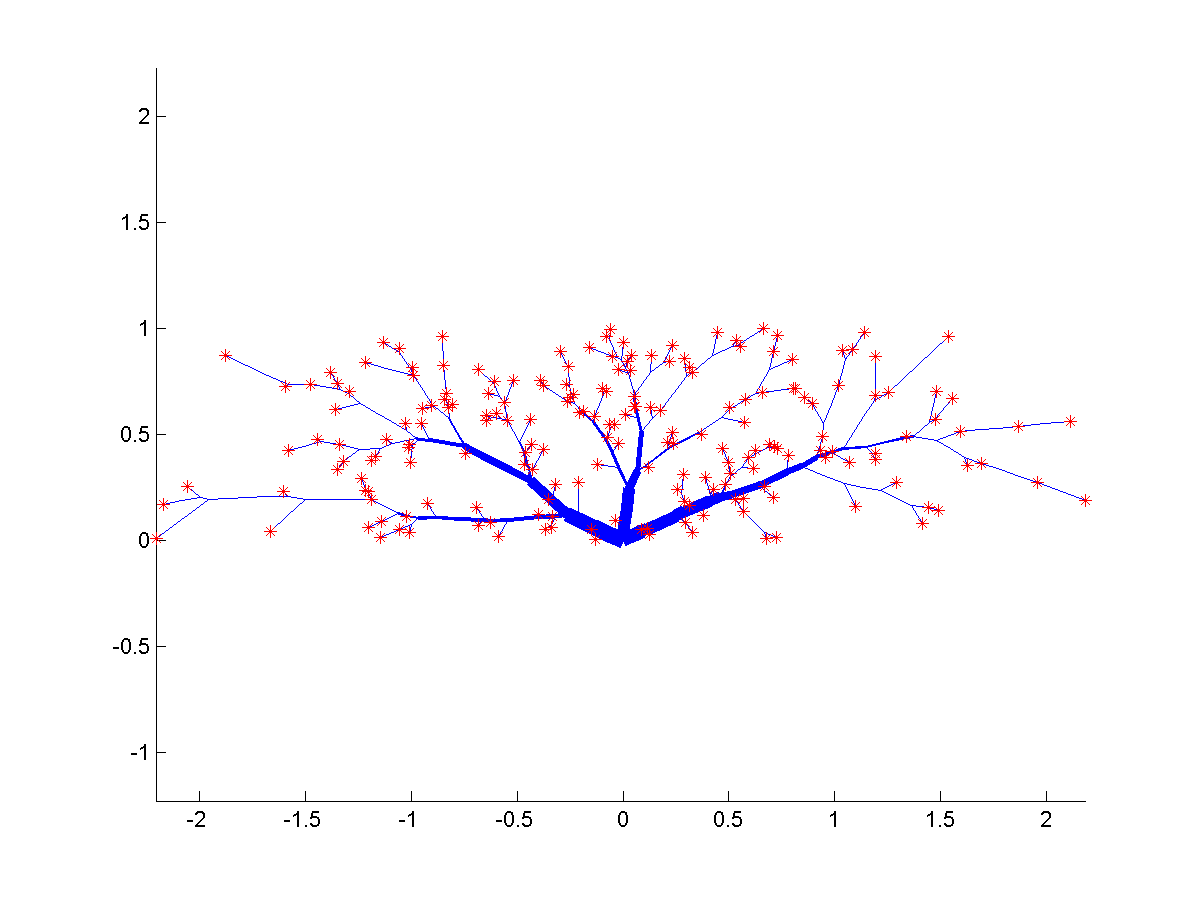}
\end{center}

\subsection{Relation between optimal transport paths and quasimetrics $J_{%
\protect\alpha }$}

We now start to investigate the relationship between optimal
transport path and the quasimetric $J_{\alpha }$ on
$\mathcal{A}_{N}\left( Y\right) $. We
first observe that any transport plan $\gamma \in Plan\left( \mathbf{a},%
\mathbf{b}\right) $ in the form of (\ref{transport_plan}) determines a
transport path $G_{\gamma }\in Path\left( \mathbf{a},\mathbf{b}\right) $.
Indeed, we consider the weighted directed graph $G_{\gamma }$ with
\begin{eqnarray*}
V\left( G_{\gamma }\right) &=&\left\{ x_{1},\cdots ,x_{m},y_{1},\cdots
,y_{n}\right\} , \\
E\left( G_{\gamma }\right) &=&\left\{ \text{a pair }\left[ x_{i},y_{j}\right]
\text{ if }\gamma _{ij}\neq 0\right\} ,
\end{eqnarray*}%
and setting the weight $W\left( \left[ x_{i},y_{j}\right] \right) =\gamma
_{ij}$ for each $i,j$ with $\gamma _{ij}\neq 0$. Moreover,
\begin{equation*}
\mathbf{M}_{\alpha }\left( G_{\gamma }\right) =\sum_{e\in E\left( G_{\gamma
}\right) }w\left( e\right) ^{\alpha }length\left( e\right) =\sum_{i,j}\left(
\gamma _{ij}\right) ^{\alpha }d\left( x_{i},y_{j}\right) =H_{\alpha }\left(
\gamma \right) .
\end{equation*}

\begin{proposition}
\label{M_alpha_J_alpha}For any $\mathbf{a}^{\left( 1\right) },\mathbf{a}%
^{\left( 2\right) },\cdots ,\mathbf{a}^{\left( k\right) }\in \mathcal{A}%
\left( Y\right) $, there exists a transport path $G\in Path\left( \mathbf{a}%
^{\left( 1\right) },\mathbf{a}^{\left( k\right) }\right) $ such that
\begin{equation*}
\mathbf{M}_{\alpha }\left( G\right) \leq \sum_{i=1}^{k-1}J_{\alpha }\left(
\mathbf{a}^{\left( i\right) },\mathbf{a}^{\left( i+1\right) }\right)
\end{equation*}%
and $G$ contains no cycles.
\end{proposition}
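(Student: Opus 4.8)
The plan is to build the desired transport path $G$ by concatenating the optimal transport plans that witness each $J_\alpha\bigl(\mathbf{a}^{(i)},\mathbf{a}^{(i+1)}\bigr)$, then clean up any cycles at the end. First I would handle the case $k=2$: by definition of $J_\alpha$, choose an $H_\alpha$-minimizing transport plan $\gamma\in Plan\bigl(\mathbf{a}^{(1)},\mathbf{a}^{(2)}\bigr)$ with $H_\alpha(\gamma)=J_\alpha\bigl(\mathbf{a}^{(1)},\mathbf{a}^{(2)}\bigr)$, and pass to the associated transport path $G_\gamma\in Path\bigl(\mathbf{a}^{(1)},\mathbf{a}^{(2)}\bigr)$ described just before the statement, which satisfies $\mathbf{M}_\alpha(G_\gamma)=H_\alpha(\gamma)$. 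Then apply Lemma \ref{no_loops} to $G_\gamma$ to obtain a cycle-free transport path with no larger $\mathbf{M}_\alpha$-cost. This already gives the $k=2$ case.

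For general $k$, I would proceed by induction on $k$. Assuming the statement for $k-1$, pick a cycle-free $G'\in Path\bigl(\mathbf{a}^{(1)},\mathbf{a}^{(k-1)}\bigr)$ with $\mathbf{M}_\alpha(G')\le\sum_{i=1}^{k-2}J_\alpha\bigl(\mathbf{a}^{(i)},\mathbf{a}^{(i+1)}\bigr)$, and pick a cycle-free $G''\in Path\bigl(\mathbf{a}^{(k-1)},\mathbf{a}^{(k)}\bigr)$ with $\mathbf{M}_\alpha(G'')\le J_\alpha\bigl(\mathbf{a}^{(k-1)},\mathbf{a}^{(k)}\bigr)$ coming from the $k=2$ step. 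The concatenation $G'+G''$ (union of vertex sets, union of edge sets with weights added along common edges, viewed as a polyhedral chain) satisfies the balance equation $\partial(G'+G'')=\bigl(\mathbf{a}^{(k-1)}-\mathbf{a}^{(1)}\bigr)+\bigl(\mathbf{a}^{(k)}-\mathbf{a}^{(k-1)}\bigr)=\mathbf{a}^{(k)}-\mathbf{a}^{(1)}$, so $G'+G''\in Path\bigl(\mathbf{a}^{(1)},\mathbf{a}^{(k)}\bigr)$. Since $\alpha\le 1$, the cost is subadditive under superposition of flows on an edge, so one gets $\mathbf{M}_\alpha(G'+G'')\le\mathbf{M}_\alpha(G')+\mathbf{M}_\alpha(G'')\le\sum_{i=1}^{k-1}J_\alpha\bigl(\mathbf{a}^{(i)},\mathbf{a}^{(i+1)}\bigr)$. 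Finally apply Lemma \ref{no_loops} once more to remove cycles without increasing $\mathbf{M}_\alpha$, yielding the claimed $G$.

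The step I expect to require the most care is verifying that combining $G'$ and $G''$ really produces an object in $Path\bigl(\mathbf{a}^{(1)},\mathbf{a}^{(k)}\bigr)$ and that its $\mathbf{M}_\alpha$-cost is controlled. The subtlety is that $G'$ and $G''$ may share edges (or share vertices through which mass is rerouted), and the balance equation must be checked vertex by vertex at the intermediate measure $\mathbf{a}^{(k-1)}$, where the outflow of $G'$ must cancel the inflow of $G''$. Working at the level of polyhedral chains (as in the Remark following the definition of transport path, where $\partial G=\mathbf{b}-\mathbf{a}$) makes the balance bookkeeping transparent, since $\partial$ is linear. For the cost bound, the key elementary fact is that for weights $w_1,w_2>0$ on a common segment of length $\ell$ one has $(w_1+w_2)^\alpha\ell\le w_1^\alpha\ell+w_2^\alpha\ell$ when $\alpha\le 1$, so superposing flows never increases $\mathbf{M}_\alpha$; edges appearing in only one of $G',G''$ contribute their original cost. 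Alternatively, one could avoid the edge-overlap issue entirely by a slight perturbation of the intermediate vertex positions so that $G'$ and $G''$ have disjoint edge sets except at the shared atoms, but the polyhedral-chain viewpoint is cleaner and is already the framework used in \cite{xia1}.
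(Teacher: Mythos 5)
Your proposal is correct and follows essentially the same route as the paper: convert each $H_\alpha$-minimizing plan into the path $G_{\gamma_i}$ with $\mathbf{M}_\alpha(G_{\gamma_i})=H_\alpha(\gamma_i)=J_\alpha\bigl(\mathbf{a}^{(i)},\mathbf{a}^{(i+1)}\bigr)$, sum these as real-coefficient polyhedral chains (so the balance equation follows from linearity of $\partial$ and the cost from subadditivity of $t\mapsto t^\alpha$), and remove cycles with Lemma \ref{no_loops}. The paper simply sums all $k-1$ pieces at once and removes cycles a single time rather than inducting on $k$, but this is only an organizational difference.
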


\begin{proof}
Let $\gamma _{i}$ be an optimal transport path from $\mathbf{a}^{\left(
i\right) }$ to $\mathbf{a}^{\left( i+1\right) }$, for each $i=1,2,\cdots
,k-1 $. Each $\gamma _{i}$ determines a transport path $G_{\gamma _{i}}\in
Path\left( \mathbf{a}^{\left( i\right) },\mathbf{a}^{\left( i+1\right)
}\right) $ as above. Then, viewed as real coefficients polyhedral chains,
\begin{equation*}
G=\sum_{i=1}^{k-1}G_{\gamma _{i}}
\end{equation*}%
is a transport path from $\mathbf{a}^{\left( 1\right) }$ to $\mathbf{a}%
^{\left( k\right) }$. Moreover, we have
\begin{equation*}
\mathbf{M}_{\alpha }\left( G\right) \leq \sum_{i=1}^{k-1}\mathbf{M}_{\alpha
}\left( G_{\gamma _{i}}\right) =\sum_{i=1}^{k-1}H_{\alpha }\left( \gamma
_{i}\right) =\sum_{i=1}^{k-1}J_{\alpha }\left( \mathbf{a}^{\left( i\right) },%
\mathbf{a}^{\left( i+1\right) }\right) .
\end{equation*}%
By lemma \ref{no_loops}, there exists a transport path $\tilde{G}$ from $%
\mathbf{a}^{\left( 1\right) }$ to $\mathbf{a}^{\left( k\right) }$ such that $%
\tilde{G}$ contains no cycles, $V\left( \tilde{G}\right) \subset V\left(
G\right) $, and
\begin{equation*}
\mathbf{M}_{\alpha }\left( \tilde{G}\right) \leq \mathbf{M}_{\alpha }\left(
G\right) \leq \sum_{i=1}^{k-1}J_{\alpha }\left( \mathbf{a}^{\left( i\right)
},\mathbf{a}^{\left( i+1\right) }\right) .
\end{equation*}
\end{proof}

\begin{theorem}
$J_{\alpha }$ is an ideal quasimetric on $\mathcal{A}_{N}\left(
Y\right) $ with $\sigma _{\infty }\left( J_{\alpha }\right) \leq
N^{2\left( 1-\alpha \right) }$.
\end{theorem}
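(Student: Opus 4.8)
The plan is to combine Proposition \ref{M_alpha_J_alpha} with the flow decomposition in Lemma \ref{decomposition}. By Definition \ref{def_ideal} together with the characterization (\ref{ideal_quasimetric}) of ideal quasimetrics, it suffices to prove that for every $\mathbf{a}^{(1)},\cdots,\mathbf{a}^{(k)}\in\mathcal{A}_N(Y)$ one has
\[
J_\alpha\left(\mathbf{a}^{(1)},\mathbf{a}^{(k)}\right)\leq N^{2(1-\alpha)}\sum_{i=1}^{k-1}J_\alpha\left(\mathbf{a}^{(i)},\mathbf{a}^{(i+1)}\right).
\]
First I would invoke Proposition \ref{M_alpha_J_alpha} to obtain a cycle-free transport path $G\in Path\left(\mathbf{a}^{(1)},\mathbf{a}^{(k)}\right)$ with $\mathbf{M}_\alpha(G)\leq\sum_{i=1}^{k-1}J_\alpha\left(\mathbf{a}^{(i)},\mathbf{a}^{(i+1)}\right)$. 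Writing $\mathbf{a}^{(1)}=\sum_{i=1}^m a_i\delta_{x_i}$ and $\mathbf{a}^{(k)}=\sum_{j=1}^n b_j\delta_{y_j}$ with $m,n\leq N$, the absence of cycles lets me apply Lemma \ref{decomposition}: there is an $m\times n$ matrix $u=(u_{ij})$ with $u_{ij}\geq0$, $\sum_i u_{ij}=b_j$, $\sum_j u_{ij}=a_i$, and curves $g_{ij}$ from $x_i$ to $y_j$, such that $G=\sum_{ij}u_{ij}g_{ij}$ as polyhedral chains.

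Next, the matrix $u$ is itself a transport plan, that is $\gamma:=\sum_{ij}u_{ij}\delta_{(x_i,y_j)}\in Plan\left(\mathbf{a}^{(1)},\mathbf{a}^{(k)}\right)$, whence
\[
J_\alpha\left(\mathbf{a}^{(1)},\mathbf{a}^{(k)}\right)\leq H_\alpha(\gamma)=\sum_{i,j}(u_{ij})^\alpha d(x_i,y_j).
\]
So the whole statement reduces to the single inequality $\sum_{i,j}(u_{ij})^\alpha d(x_i,y_j)\leq N^{2(1-\alpha)}\mathbf{M}_\alpha(G)$. To prove it I would fix an edge $e\in E(G)$ and note two facts: (i) since $G$ has no cycles, every curve $g_{ij}$ traverses the edges of $G$ in their given directions, so no cancellation occurs in $G=\sum u_{ij}g_{ij}$ and hence $w(e)=\sum_{(i,j):\,e\subset g_{ij}}u_{ij}$; and (ii) the number of pairs $(i,j)$ with $e\subset g_{ij}$ is at most $mn\leq N^2$. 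Combining $d(x_i,y_j)\leq length\left(g_{ij}\right)=\sum_{e\subset g_{ij}}length(e)$ with an interchange of summation and the elementary inequality $\sum_{l=1}^p t_l^\alpha\leq p^{1-\alpha}\left(\sum_{l=1}^p t_l\right)^\alpha$ (valid for $\alpha\leq1$, applied with $p\leq N^2$) then yields
\[
\sum_{i,j}(u_{ij})^\alpha d(x_i,y_j)\leq\sum_{e\in E(G)}length(e)\sum_{(i,j):\,e\subset g_{ij}}(u_{ij})^\alpha\leq N^{2(1-\alpha)}\sum_{e\in E(G)}w(e)^\alpha length(e)=N^{2(1-\alpha)}\mathbf{M}_\alpha(G).
\]
Chaining the three estimates gives the displayed inequality, so $\sigma_\infty(J_\alpha)\leq N^{2(1-\alpha)}<\infty$ and $J_\alpha$ is ideal.

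The step I expect to be the main obstacle is the edge-by-edge bookkeeping in the last paragraph: making precise that the weight $w(e)$ of an edge of $G$ equals the sum of the coefficients $u_{ij}$ over exactly those curves $g_{ij}$ that traverse $e$ (this is where the cycle-free hypothesis is essential, to rule out cancellation), bounding the multiplicity of such curves by $mn\leq N^2$, and justifying $d(x_i,y_j)\leq length(g_{ij})$ from the triangle inequality summed along the polyhedral curve $g_{ij}$ in $(Y,d)$. Once these combinatorial facts are in place, the remaining estimate is just the concavity of $t\mapsto t^\alpha$ applied on each edge, together with the Propositions and Lemmas already established.
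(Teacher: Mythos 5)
Your proposal is correct and follows essentially the same route as the paper: invoke Proposition \ref{M_alpha_J_alpha} to get a cycle-free path $G$, decompose it via Lemma \ref{decomposition}, use $\gamma=\sum_{ij}u_{ij}\delta_{(x_i,y_j)}$ as a competitor plan, and then bound $\sum_{(i,j):\,e\subset g_{ij}}(u_{ij})^{\alpha}\leq N^{2(1-\alpha)}\,w(e)^{\alpha}$ edge by edge using concavity of $t\mapsto t^{\alpha}$. The bookkeeping facts you single out (that $w(e)$ is the sum of the $u_{ij}$ over curves traversing $e$, and that the multiplicity is at most $mn\leq N^{2}$) are precisely what the paper's displayed computation uses implicitly, so your argument fills in rather than diverges from the published proof.
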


\begin{proof}
For any $k\in \mathbb{N}$ and any points $\left\{ \mathbf{a}^{\left(
1\right) },\mathbf{a}^{\left( 2\right) },\cdots ,\mathbf{a}^{\left( k\right)
}\right\} \subset \mathcal{A}_{N}\left( Y\right) $, by proposition \ref%
{M_alpha_J_alpha}, there exists a transport path $G\in Path\left( \mathbf{a}%
^{\left( 1\right) },\mathbf{a}^{\left( k\right) }\right) $ such that
\begin{equation*}
\mathbf{M}_{\alpha }\left( G\right) \leq \sum_{i=1}^{k-1}J_{\alpha }\left(
\mathbf{a}^{\left( i\right) },\mathbf{a}^{\left( i+1\right) }\right)
\end{equation*}%
and $G$ contains no cycles. Moreover, by lemma \ref{decomposition}, there
exists a matrix $\left( u_{ij}\right) $ of real numbers and a matric $\left(
g_{ij}\right) $ of polyhedral curves such that
\begin{equation*}
G=\sum_{ij}u_{ij}g_{ij}
\end{equation*}%
as real coefficients polyhedral chains. Let
\begin{equation*}
\gamma =\sum_{ij}u_{ij}\delta _{\left( x_{i},y_{j}\right) }
\end{equation*}%
be any transport plan in $Plan\left( \mathbf{a}^{\left( 1\right) },\mathbf{a}%
^{\left( k\right) }\right) $. Then,
\begin{eqnarray*}
H_{\alpha }\left( \gamma \right) &=&\sum_{ij}\left( u_{ij}\right) ^{\alpha
}d\left( x_{i},y_{j}\right) \leq \sum_{ij}\left( u_{ij}\right) ^{\alpha
}length\left( g_{ij}\right) \\
&=&\sum_{e\in E\left( G\right) }\left( \sum_{g_{ij}\text{ contains }e}\left(
u_{ij}\right) ^{\alpha }\right) length\left( e\right) \\
&\leq &\sum_{e}\left( N^{2\left( 1-\alpha \right) }\left( \sum_{g_{ij}\text{
contains }e}u_{ij}\right) ^{\alpha }\right) length\left( e\right) \\
&=&N^{2\left( 1-\alpha \right) }\sum_{e\in E\left( G\right) }\left( w\left(
e\right) \right) ^{\alpha }length\left( e\right) \\
&=&N^{2\left( 1-\alpha \right) }\mathbf{M}_{\alpha }\left( G\right) \leq
N^{2\left( 1-\alpha \right) }\sum_{i=1}^{k-1}J_{\alpha }\left( \mathbf{a}%
^{\left( i\right) },\mathbf{a}^{\left( i+1\right) }\right) .
\end{eqnarray*}%
Therefore,
\begin{equation*}
J_{\alpha }\left( \mathbf{a}^{\left( 1\right) },\mathbf{a}^{\left( k\right)
}\right) \leq N^{2\left( 1-\alpha \right) }\sum_{i=1}^{k-1}J_{\alpha }\left(
\mathbf{a}^{\left( i\right) },\mathbf{a}^{\left( i+1\right) }\right)
\end{equation*}%
and thus $J_{\alpha }$ is an ideal quasimetric on
$\mathcal{A}_{N}\left( Y\right) $ with $\sigma _{\infty }\left(
J_{\alpha }\right) \leq N^{2\left( 1-\alpha \right) }$.
\end{proof}

Suppose $\left( Y,d\right) $ is a geodesic metric space. That is, for any $%
x,y\in Y$, there exists a Lipschitz curve $\Gamma _{x,y}:\left[ 0,1\right]
\rightarrow \left( Y,d\right) $ with $\Gamma _{x,y}\left( 0\right) =x$, $%
\Gamma _{x,y}\left( 1\right) =y$ and length $L\left( \Gamma _{x,y}\right)
=d\left( x,y\right) $.

\begin{lemma}
\label{path2curve}Suppose $\left( Y,d\right) $ is a geodesic metric space.
Let $G\in Path\left( \mathbf{a,b}\right) $ for some $\mathbf{a,b\in }%
\mathcal{A}_{N}\left( Y\right) $. If each edge of $G$ is a geodesic curve
between its endpoints in the metric space $Y$, then there exists a piecewise
metric Lipschitz curve $g\in \mathcal{P}_{N_{G}}\left( \left[ 0,1\right]
\text{,}\left( \mathcal{A}_{N}\left( Y\right) ,J_{\alpha }\right) \right) $
such that
\begin{equation*}
L_{J_{\alpha }}\left( g\right) =\mathbf{M}_{\alpha }\left( G\right) ,
\end{equation*}%
where $N_{G}$ is total number of edges in the graph $G$.
\end{lemma}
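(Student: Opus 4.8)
The plan is to let $\mathbf{a}$ flow through $G$ one edge at a time, in a topological order, sliding mass along each geodesic edge. First, by Lemma \ref{no_loops} I may assume $G$ is cycle-free, so its underlying graph is a forest and there is no directed cycle; then the edges of $G$ can be listed $e_1,\dots,e_{N_G}$ so that every edge entering a vertex precedes every edge leaving it, i.e.\ as a linear extension of the (acyclic) relation $\mathrm{head}(e)=\mathrm{tail}(e')$. Write $w_k=w(e_k)$, $\ell_k=\mathrm{length}(e_k)=d(e_k^-,e_k^+)$, and let $\Gamma_k\colon[0,1]\to Y$ be the constant-speed parametrization of the geodesic edge $e_k$, so $\Gamma_k(0)=e_k^-$, $\Gamma_k(1)=e_k^+$, and $d(\Gamma_k(s),\Gamma_k(s'))=\ell_k\,|s-s'|$ for all $s,s'$ (subarcs of minimizing geodesics are minimizing). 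Partition $[0,1]$ into the $N_G$ equal subintervals $[t_{k-1},t_k]$, $t_k=k/N_G$.

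On $[t_{k-1},t_k]$ I would set
\[
g(t)=g(t_{k-1})-w_k\,\delta_{e_k^-}+w_k\,\delta_{\Gamma_k\left(N_G(t-t_{k-1})\right)},
\]
so that $g(t_k)=\mathbf{a}+\sum_{j\le k}w_j\bigl(\delta_{e_j^+}-\delta_{e_j^-}\bigr)$ for every $k$. The first point to check is that each $g(t)$ is a genuine nonnegative atomic probability measure: by the chosen ordering, at time $t_{k-1}$ all the flow entering $e_k^-$ has already been routed there while only part of the flow leaving it has been dispatched, so the balance equation (\ref{balance_equation}) forces $g(t_{k-1})$ to carry at least $w_k$ units of mass at $e_k^-$; the base case uses that the first edge emanates from a source $x_i$ with $a_i\ge w_1$. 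Hence $g(t)\ge 0$, $g(0)=\mathbf{a}$, and $g(1)=\mathbf{a}+\partial G=\mathbf{b}$; continuity of $g$ is clear inside each subinterval and at the breakpoints $t_k$, where the two one-sided limits agree (both equal the measure supported on $V(G)$).

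Next I would establish that $g$ is $N_G$-piecewise metric Lipschitz with the asserted length. On $[t_{k-1},t_k]$ all the measures $g(t)$ coincide except for the position of a single clump of mass $w_k$ sliding along the minimizing geodesic $\Gamma_k$, and the key claim is that $J_\alpha\bigl(g(t),g(t')\bigr)=N_G\,w_k^{\alpha}\,\ell_k\,|t-t'|$ for $t,t'\in[t_{k-1},t_k]$. Granting the claim, $J_\alpha$ restricts on $g\left([t_{k-1},t_k]\right)$ to a rescaled copy of the usual metric on an interval (the set is parametrized injectively since $\Gamma_k$ is), hence is a metric there, and the restriction of $g$ is Lipschitz with metric derivative $\left|\dot g(t)\right|_{J_\alpha}=N_G\,w_k^{\alpha}\ell_k$ a.e.; therefore $L\bigl(g\lfloor _{\left[t_{k-1},t_k\right]}\bigr)=w_k^{\alpha}\ell_k$, and summing over $k$ yields $L_{J_\alpha}(g)=\sum_k w_k^{\alpha}\ell_k=\mathbf{M}_\alpha(G)$.

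The hard part is the claim, i.e.\ computing $J_\alpha$ on the sliding arc. The transport plan that keeps the ambient atoms fixed and carries the clump $w_k$ directly from $\Gamma_k(s)$ to $\Gamma_k(s')$ has $H_\alpha$-cost exactly $w_k^{\alpha}\,d(\Gamma_k(s),\Gamma_k(s'))=w_k^{\alpha}\ell_k|s-s'|$, so only its optimality has to be shown. For a competing plan I would split the mass leaving $\Gamma_k(s)$ into a part $a$ sent straight to $\Gamma_k(s')$ and a part $r=w_k-a$ detoured through other atoms; the triangle inequality in $Y$ makes the two legs of any detour sum to at least $d(\Gamma_k(s),\Gamma_k(s'))$, and the superadditivity $a^{\alpha}+r^{\alpha}\ge(a+r)^{\alpha}=w_k^{\alpha}$ valid for $\alpha<1$ then forces the total $H_\alpha$-cost to be at least $w_k^{\alpha}d(\Gamma_k(s),\Gamma_k(s'))$; the same estimate covers the degenerate configurations in which $\Gamma_k$ passes through an atom already present in $g(t)$. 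One minor point: since $|V(G)|$ may exceed $N$, the curve $g$ naturally takes values in $\mathcal{A}_M(Y)$ with $M=|V(G)|+1$ rather than literally in $\mathcal{A}_N(Y)$, and the statement should be read with this larger bound.
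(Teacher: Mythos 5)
Your construction is in substance the paper's own: the paper argues by induction on $N_G$, at each step letting the clump $w(e)$ of one edge slide along its geodesic while the rest of the measure stays put, which is exactly your edge-by-edge scheme in a topological order; your nonnegativity bookkeeping via the balance equation is correct, and your closing remark about the curve living in $\mathcal{A}_M(Y)$ rather than $\mathcal{A}_N(Y)$ applies verbatim to the paper's induction as well (there $\mathbf{\tilde a}$ can acquire extra atoms), so that is a fair reading of the statement. One small objection: invoking Lemma \ref{no_loops} to ``assume $G$ is cycle-free'' is not a legitimate reduction, since that lemma only yields $\mathbf{M}_{\alpha}(\tilde G)\leq \mathbf{M}_{\alpha}(G)$ while the lemma you are proving asserts an equality for the given $G$; you need acyclicity only to have a topological order, so you should either state the lemma for cycle-free $G$ (the only case used afterwards) or handle the general case separately.

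The more serious issue is the lower bound in your key claim, which your detour argument does not establish. In a competing plan the mass leaving $\Gamma_k(s)$ and the mass arriving at $\Gamma_k(s')$ need not be matched leg-by-leg through a common intermediate atom: the clump can be dumped onto an ambient atom $z$ close to $\Gamma_k(s)$ while $\Gamma_k(s')$ is fed from a different ambient atom $z'$ close to $\Gamma_k(s')$, and then $d\bigl(\Gamma_k(s),z\bigr)+d\bigl(z',\Gamma_k(s')\bigr)$ is not bounded below by $d\bigl(\Gamma_k(s),\Gamma_k(s')\bigr)$; the gap is bridged by ambient-to-ambient pairs whose cost your estimate never accounts for, and the masses of the two ``legs'' need not agree, so the two-term superadditivity does not apply as stated. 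One way to close this: a plan $\gamma$ between $g(t)$ and $g(t')$ induces a graph $G_{\gamma}$ whose boundary is exactly $w_k\bigl(\delta_{\Gamma_k(s')}-\delta_{\Gamma_k(s)}\bigr)$ (the common ambient part cancels); removing cycles as in Lemma \ref{no_loops} and decomposing as in Lemma \ref{decomposition} into directed paths from $\Gamma_k(s)$ to $\Gamma_k(s')$ with weights $u_t$, $\sum_t u_t=w_k$, each of length at least $d\bigl(\Gamma_k(s),\Gamma_k(s')\bigr)$, the inequality $\bigl(\sum_{t\ni e}u_t\bigr)^{\alpha}\geq w_k^{\alpha-1}\sum_{t\ni e}u_t$ (valid since $\sum_{t\ni e}u_t\leq w_k$ and $\alpha<1$) gives $H_{\alpha}(\gamma)\geq w_k^{\alpha}\,d\bigl(\Gamma_k(s),\Gamma_k(s')\bigr)$. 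This missing half is precisely what makes $J_{\alpha}$ a metric on the image of each piece and forces $L\bigl(g\lfloor_{[t_{k-1},t_k]}\bigr)=w_k^{\alpha}\ell_k$; note the paper's proof silently assumes the same equality when it writes the length of the one-clump move as $w(e)^{\alpha}\,length(e)$, so supplying this estimate is the real content of what you correctly identify as the hard part.
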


\begin{proof}
We may prove it using the mathematical induction on $N_{G}$. When $N_{G}=1$,
$G$ itself is a geodesic in $Y$. Then, it is clearly true in this case. \
Now, assume $N_{G}>1$. Pick an edge $e$ of $G$ with its starting endpoint $%
e^{-}$ being a vertex in $\mathbf{a}$. Let
\begin{equation*}
\mathbf{\tilde{a}}=\mathbf{a}+w\left( e\right) \left( \mathbf{\delta }%
_{e^{+}}-\delta _{e^{-}}\right) ,
\end{equation*}%
where $e^{+}$ is the targeting endpoint of the directed edge $e$, and $%
w\left( e\right) $ is the associated weight on $e$. Removing edge $e$ from $%
G $, we get another transport path $\tilde{G}\in Path\left( \mathbf{\tilde{a}%
,b}\right) .$ Then, $N_{\tilde{G}}=N_{G}-1\geq 1$. By the principle of the
mathematical induction, we may assume that $\tilde{G}$ corresponds to a
piecewise metric Lipschitz curve $\tilde{g}\in \mathcal{P}_{N_{\tilde{G}%
}}\left( \left[ 0,1\right] \text{,}\left( \mathcal{A}_{N}\left( Y\right)
,J_{\alpha }\right) \right) $ such that
\begin{equation*}
L_{J_{\alpha }}\left( \tilde{g}\right) =\mathbf{M}_{\alpha }\left( \tilde{G}%
\right) .
\end{equation*}%
Now, let
\begin{equation*}
g\left( t\right) =\left\{
\begin{array}{cc}
\tilde{g}\left( \frac{t}{\lambda }\right) , & 0\leq t\leq \lambda \\
\Gamma _{e}\left( \frac{t-\lambda }{1-\lambda }\right) , & \lambda \leq
t\leq 1%
\end{array}%
\right. ,
\end{equation*}%
where $\lambda =\frac{N_{G}-1}{N_{G}}$, and $\Gamma _{e}$ is the associated
geodesic in $Y$ from $e^{-}$ to $e^{+}$. Then, $g\in \mathcal{P}%
_{N_{G}}\left( \left[ 0,1\right] \text{,}\left( \mathcal{A}_{N}\left(
Y\right) ,J_{\alpha }\right) \right) $ and
\begin{equation*}
L_{J_{\alpha }}\left( g\right) =L_{J_{\alpha }}\left( \tilde{g}\right)
+L_{J_{\alpha }}\left( \Gamma _{e}\right) =\mathbf{M}_{\alpha }\left( \tilde{%
G}\right) +w\left( e\right) ^{\alpha }length\left( e\right) =\mathbf{M}%
_{\alpha }\left( G\right) .
\end{equation*}
\end{proof}

\begin{remark}
From this lemma, we see that for any transport path $G\in Path\left( \mathbf{%
a,b}\right) $ in a geodesic metric space $\left( Y,d\right) $, we have a
simple formula for the transport cost:
\begin{equation*}
\mathbf{M}_{\alpha }\left( G\right) =\int_{0}^{1}\left| \dot{g}\left(
t\right) \right| _{J_{\alpha }}dt.
\end{equation*}%
On the other hand, in \cite{buttazzo}, the authors studied another kind of
ramified transportation in which the cost of a path is given by
\begin{equation*}
\int_{0}^{1}\left| \dot{g}\left( t\right) \right| _{W}J\left( g\left(
t\right) \right) dt
\end{equation*}%
where $W$ is the Wasserstein distance on probability measures, and $J$ is
some function on the space of atomic probability measures. It is interesting
to see this difference between these two different approaches.
\end{remark}

\begin{theorem}
Suppose $\left( Y,d\right) $ is a geodesic metric space. Then,
$J_{\alpha }$ is a perfect quasimetric on $\mathcal{A}_{N}\left(
Y\right) $, and thus it induces a metric $D_{J_{\alpha }}$ on
$\mathcal{A}_{N}\left( Y\right) $.
\end{theorem}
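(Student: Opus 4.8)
The plan is to show that for any $\mathbf{a},\mathbf{b}\in\mathcal{A}_N(Y)$ the quantity $D_{J_\alpha}^{(M)}(\mathbf{a},\mathbf{b})$ stabilizes to a finite constant once $M$ is large enough, which is exactly the definition of being a perfect quasimetric; the final assertion that $D_{J_\alpha}$ is a metric then follows immediately from the proposition on perfect quasimetrics stated above. The key is to exhibit, on one side, a piecewise metric Lipschitz curve whose length is at most $\mathbf{M}_\alpha$ of an optimal transport path, and, on the other side, a lower bound showing no curve in $Path_M(\mathbf{a},\mathbf{b})$ can beat the optimal $\mathbf{M}_\alpha$ cost.

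First I would reduce to the case where $(Y,d)$ is a geodesic metric space (as hypothesized) and invoke the existence of an $\mathbf{M}_\alpha$-minimizing transport path $G^\ast\in Path(\mathbf{a},\mathbf{b})$ that contains no cycles, which exists by Lemma \ref{no_loops}. By subdividing edges if necessary and replacing each edge by a geodesic arc in $Y$ (which does not change $\mathbf{M}_\alpha$ since length is unchanged), I may assume each edge of $G^\ast$ is a geodesic curve between its endpoints. Applying Lemma \ref{path2curve} to $G^\ast$ produces a curve $g\in\mathcal{P}_{N_{G^\ast}}\big([0,1],(\mathcal{A}_N(Y),J_\alpha)\big)$ from $\mathbf{a}$ to $\mathbf{b}$ with $L_{J_\alpha}(g)=\mathbf{M}_\alpha(G^\ast)$. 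Hence, writing $M_0=N_{G^\ast}$, we get $D_{J_\alpha}^{(M)}(\mathbf{a},\mathbf{b})\le\mathbf{M}_\alpha(G^\ast)$ for every $M\ge M_0$; note $N_{G^\ast}$ is bounded in terms of $N$ alone (a cycle-free transport path between measures with at most $N$ atoms each has a controlled number of edges), so $M_0$ may be taken uniform.

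For the reverse inequality I would show $D_{J_\alpha}^{(M)}(\mathbf{a},\mathbf{b})\ge\mathbf{M}_\alpha(G^\ast)$ for all $M$. Given any $f\in Path_M(\mathbf{a},\mathbf{b})$ with partition $0=a_0<\cdots<a_M=1$, the finitely many intermediate values $\mathbf{a}^{(i)}=f(a_i)$ lie in $\mathcal{A}_N(Y)$ and, since $J_\alpha$ is a metric on each $f([a_i,a_{i+1}])$, we have $J_\alpha(\mathbf{a}^{(i)},\mathbf{a}^{(i+1)})\le L_{J_\alpha}\big(f\lfloor_{[a_i,a_{i+1}]}\big)$; summing gives $\sum_{i}J_\alpha(\mathbf{a}^{(i)},\mathbf{a}^{(i+1)})\le L_{J_\alpha}(f)$. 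By Proposition \ref{M_alpha_J_alpha} there is a transport path $G\in Path(\mathbf{a},\mathbf{b})$ with $\mathbf{M}_\alpha(G)\le\sum_i J_\alpha(\mathbf{a}^{(i)},\mathbf{a}^{(i+1)})\le L_{J_\alpha}(f)$, and by minimality $\mathbf{M}_\alpha(G^\ast)\le\mathbf{M}_\alpha(G)\le L_{J_\alpha}(f)$. Taking the infimum over $f\in Path_M(\mathbf{a},\mathbf{b})$ yields $D_{J_\alpha}^{(M)}(\mathbf{a},\mathbf{b})\ge\mathbf{M}_\alpha(G^\ast)$. Combining the two bounds, $D_{J_\alpha}^{(M)}(\mathbf{a},\mathbf{b})=\mathbf{M}_\alpha(G^\ast)$ for all $M\ge M_0$, so $J_\alpha$ is perfect and $D_{J_\alpha}(\mathbf{a},\mathbf{b})=\mathbf{M}_\alpha(G^\ast)$, the minimal transport cost.

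The main obstacle I anticipate is the \emph{uniform} control on $N_{G^\ast}$: I must argue that an optimal cycle-free transport path between two measures in $\mathcal{A}_N(Y)$ uses a number of edges bounded by a function of $N$ only, so that the stabilization threshold $M_0$ does not depend on the particular pair $(\mathbf{a},\mathbf{b})$ — though in fact the definition of ``perfect'' only requires stabilization for each fixed pair, so even a pair-dependent $M_0$ suffices, and this obstacle is mild. A secondary point requiring care is that Lemma \ref{path2curve} is stated for transport paths whose edges are already geodesic curves, so I must first replace $G^\ast$ by an equivalent optimizer of that form without increasing $\mathbf{M}_\alpha$; this is routine since $(Y,d)$ is geodesic.
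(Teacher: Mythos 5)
Your proposal is correct and follows essentially the same route as the paper: both directions rest on Proposition \ref{M_alpha_J_alpha} (converting a piecewise metric Lipschitz curve into a cycle-free transport path of no greater cost) and Lemma \ref{path2curve} (converting a transport path with geodesic edges back into a piecewise metric Lipschitz curve of equal length). The only cosmetic differences are that you anchor the stabilized value at $\mathbf{M}_{\alpha}(G^{\ast})$ of an optimal transport path (thereby proving Corollary \ref{D_J_optimal} along the way) and allow a pair-dependent threshold, whereas the paper avoids invoking the minimizer and instead gets the uniform threshold $4N-3$ by counting edges of a cycle-free path.
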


\begin{proof}
Suppose $\mathbf{a,b}$ are two points in $\mathcal{A}_{N}\left( Y\right) $.
For any $f\in \mathcal{P}_{k}\left( \left[ 0,1\right] \text{,}\left(
\mathcal{A}_{N}\left( Y\right) ,J_{\alpha }\right) \right) $ with $f\left(
0\right) =\mathbf{a}$ and $f\left( 1\right) =\mathbf{b}$, there exists a
partition $P=\left\{ 0=a_{0}<\cdots <a_{k}=1\right\} $ of $\left[ 0,1\right]
$ such that $J_{\alpha }$ is a metric on $f\left( \left[ a_{i},a_{i+1}\right]
\right) $ and $f\lfloor _{\left[ a_{i},a_{i+1}\right] }$ is Lipschitz for
each $i=0,1,\cdots ,k-1$. Let $x_{i}=f\left( a_{i}\right) $ for each $i$, by
proposition \ref{M_alpha_J_alpha}, there exists a transport path $G$ from $%
f\left( 0\right) =\mathbf{a}$ to $f\left( 1\right) =\mathbf{b}$ such that
\begin{equation*}
\mathbf{M}_{\alpha }\left( G\right) \leq \sum J_{\alpha }\left(
x_{i},x_{i+1}\right) \leq \sum_{i}L\left( f\lfloor _{\left[ a_{i},a_{i+1}%
\right] }\right) =L\left( f\right)
\end{equation*}%
and $G$ contains no cycles.When $\left( Y,d\right) $ is a geodesic metric
space, each edge of $G$ is realized by a geodesic curve between its
endpoints. By lemma \ref{path2curve}, $G$\ determines a curve $g\in \mathcal{%
P}_{N_{G}}\left( \left[ 0,1\right] \text{,}\left( \mathcal{A}_{N}\left(
Y\right) ,J_{\alpha }\right) \right) $ with $L\left( g\right) =\mathbf{M}%
_{\alpha }\left( G\right) \leq L\left( f\right) $. Since $\mathbf{a},\mathbf{%
b}\in \mathcal{A}_{N}\left( Y\right) $ and $G\in Path\left( \mathbf{a,b}%
\right) $, the total number of vertices of $G$ with degree one is no more
than $2N$. Since $G$ contains no cycles, the total number $N_{G}$ of edges
of $G$ is no more than $4N-3$. Thus, $g\in \mathcal{P}_{4N-3}\left( \left[
0,1\right] \text{,}\left( \mathcal{A}_{N}\left( Y\right) ,J_{\alpha }\right)
\right) $. Hence, for any $\mathbf{a},\mathbf{b}\in \mathcal{A}_{N}\left(
Y\right) $,
\begin{equation*}
D_{J_{\alpha }}^{\left( k\right) }\left( \mathbf{a},\mathbf{b}\right)
=D_{J_{\alpha }}^{\left( 4N-3\right) }\left( \mathbf{a},\mathbf{b}\right)
\end{equation*}%
for any $k\geq 4N-3$. This shows that $J_{\alpha }$ is a perfect
quasimetric on $\mathcal{A}_{N}\left( Y\right) $.
\end{proof}

\begin{corollary}
\label{D_J_optimal}Suppose $\left( Y,d\right) $ is a geodesic metric space.
Then, for any $\mathbf{a,b\in }\mathcal{A}_{N}\left( Y\right) $ and $\alpha
\leq 1$, we have
\begin{equation*}
D_{J_{\alpha }}\left( \mathbf{a},\mathbf{b}\right) =\min \left\{ \mathbf{M}%
_{\alpha }\left( G\right) :G\in Path\left( \mathbf{a},\mathbf{b}\right)
\right\} .
\end{equation*}

\begin{proof}
Let $G$ be any optimal transport path from $\mathbf{a}$ to $\mathbf{b}$.
From the proof of the above theorem, we see $D_{J_{\alpha }}\left( \mathbf{a}%
,\mathbf{b}\right) \leq \mathbf{M}_{\alpha }\left( G\right) \leq L\left(
f\right) $ for any $f\in \mathcal{P}_{k}\left( \left[ 0,1\right] \text{,}%
\left( \mathcal{A}_{N}\left( Y\right) ,J_{\alpha }\right) \right) $ with $%
k\geq 4N-3$. Hence, $D_{J_{\alpha }}\left( \mathbf{a},\mathbf{b}\right) =%
\mathbf{M}_{\alpha }\left( G\right) $.
\end{proof}
\end{corollary}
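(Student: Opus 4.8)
The plan is to establish the two inequalities $D_{J_{\alpha}}(\mathbf{a},\mathbf{b})\le \mathbf{M}_{\alpha}(G_{0})$ and $\mathbf{M}_{\alpha}(G_{0})\le D_{J_{\alpha}}(\mathbf{a},\mathbf{b})$, where $G_{0}$ is an optimal transport path, reusing the work already done in the preceding theorem (which shows $J_{\alpha}$ is a perfect quasimetric) and in Lemma~\ref{path2curve}. First I would note that, by Lemma~\ref{no_loops} together with the observation recorded just after Lemma~\ref{decomposition}, the infimum $\inf\{\mathbf{M}_{\alpha}(G):G\in Path(\mathbf{a},\mathbf{b})\}$ is attained by some cycle-free optimal transport path $G_{0}$; this is what justifies writing $\min$ in the statement, and moreover $\mathbf{a},\mathbf{b}\in\mathcal{A}_{N}(Y)$ forces $G_{0}$, being a tree with at most $2N$ degree-one vertices, to have at most $N_{G_{0}}\le 4N-3$ edges.

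For $D_{J_{\alpha}}(\mathbf{a},\mathbf{b})\le\mathbf{M}_{\alpha}(G_{0})$: since $(Y,d)$ is geodesic, each edge of $G_{0}$ may be realized by a geodesic curve between its endpoints without changing $\mathbf{M}_{\alpha}$, so Lemma~\ref{path2curve} yields a piecewise metric Lipschitz curve $g\in\mathcal{P}_{N_{G_{0}}}([0,1],(\mathcal{A}_{N}(Y),J_{\alpha}))\subset\mathcal{P}_{4N-3}([0,1],(\mathcal{A}_{N}(Y),J_{\alpha}))$ with $g(0)=\mathbf{a}$, $g(1)=\mathbf{b}$ and $L_{J_{\alpha}}(g)=\mathbf{M}_{\alpha}(G_{0})$. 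Hence $D_{J_{\alpha}}^{(4N-3)}(\mathbf{a},\mathbf{b})\le L_{J_{\alpha}}(g)=\mathbf{M}_{\alpha}(G_{0})$, and since $J_{\alpha}$ is perfect, $D_{J_{\alpha}}^{(4N-3)}(\mathbf{a},\mathbf{b})=D_{J_{\alpha}}(\mathbf{a},\mathbf{b})$.

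For the reverse inequality I would take any $k\ge 4N-3$ and any $f\in Path_{k}(\mathbf{a},\mathbf{b})$; the construction used in the proof of the preceding theorem (applying Proposition~\ref{M_alpha_J_alpha} to the images $x_{i}=f(a_{i})$ of the partition points together with $J_{\alpha}(x_{i},x_{i+1})\le L(f\lfloor_{[a_{i},a_{i+1}]})$) gives a transport path $G_{f}\in Path(\mathbf{a},\mathbf{b})$ with $\mathbf{M}_{\alpha}(G_{f})\le L_{J_{\alpha}}(f)$, so that $\mathbf{M}_{\alpha}(G_{0})\le\mathbf{M}_{\alpha}(G_{f})\le L_{J_{\alpha}}(f)$. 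Taking the infimum over all such $f$ gives $\mathbf{M}_{\alpha}(G_{0})\le D_{J_{\alpha}}^{(k)}(\mathbf{a},\mathbf{b})=D_{J_{\alpha}}(\mathbf{a},\mathbf{b})$, again by perfectness, and combining the two inequalities gives the claim. I do not expect a genuine obstacle here: all the heavy machinery (perfectness of $J_{\alpha}$, hence the stabilization $D_{J_{\alpha}}^{(k)}=D_{J_{\alpha}}$ for $k\ge 4N-3$, and the path-to-curve correspondence of Lemma~\ref{path2curve}) is already available; the only points requiring care are keeping the orientation of each inequality straight and observing explicitly that the defining infimum for $D_{J_{\alpha}}$ is actually a minimum.
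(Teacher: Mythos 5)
Your proof is correct and follows essentially the same route as the paper: the paper's one-line argument likewise obtains $D_{J_{\alpha}}(\mathbf{a},\mathbf{b})\leq \mathbf{M}_{\alpha}(G)$ via Lemma \ref{path2curve} and the $4N-3$ edge bound, and $\mathbf{M}_{\alpha}(G)\leq L(f)$ via the construction in the preceding theorem, then concludes by the stabilization $D_{J_{\alpha}}^{(k)}=D_{J_{\alpha}}$. Your version merely spells out these steps, including the attainment of the minimum, which the paper takes for granted from the remark after Lemma \ref{decomposition}.
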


\begin{corollary}
\label{length_space_proof}\bigskip Suppose $\left( Y,d\right) $ is a
geodesic metric space. Then, $(\mathcal{A}_{N}\left( Y\right) ,D_{J_{\alpha
}})$ is a length space.
\end{corollary}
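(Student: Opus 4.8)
The plan is to deduce this from Theorem~\ref{length_space}, which asserts that if $J$ is a perfect quasimetric on $X$ and the geodesic problem (\ref{geodesic_problem}) admits a solution for all sufficiently large $N$, then $(X,D_J)$ is a length space. By the theorem immediately preceding this corollary, $J_{\alpha}$ is a perfect quasimetric on $\mathcal{A}_N(Y)$ whenever $(Y,d)$ is geodesic, so the first hypothesis is already available. Hence the only thing left to establish is that, for every $\mathbf{a},\mathbf{b}\in\mathcal{A}_N(Y)$ and every integer $k\ge 4N-3$, there is a curve $g\in Path_k(\mathbf{a},\mathbf{b})$ with $L_{J_{\alpha}}(g)=D_{J_{\alpha}}^{(k)}(\mathbf{a},\mathbf{b})$.

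First I would produce the candidate $g$ from an optimal transport path. Using Lemma~\ref{no_loops} together with the existence of an $\mathbf{M}_{\alpha}$-minimizer in $Path(\mathbf{a},\mathbf{b})$ (recorded just after Lemma~\ref{decomposition}), choose an acyclic $G\in Path(\mathbf{a},\mathbf{b})$ with $\mathbf{M}_{\alpha}(G)$ minimal. Since $\mathbf{a},\mathbf{b}\in\mathcal{A}_N(Y)$, the graph $G$ has at most $2N$ vertices of degree one, and being acyclic its total number of edges $N_G$ is at most $4N-3$ (this is precisely the counting used in the proof that $J_{\alpha}$ is perfect). Because $(Y,d)$ is geodesic, I may replace every edge of $G$ by a geodesic curve joining its endpoints, which changes neither the combinatorics (so $G$ stays acyclic) nor the value $\mathbf{M}_{\alpha}(G)$. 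Then Lemma~\ref{path2curve} applies and yields a piecewise metric Lipschitz curve $g\in\mathcal{P}_{N_G}([0,1],(\mathcal{A}_N(Y),J_{\alpha}))$ with $g(0)=\mathbf{a}$, $g(1)=\mathbf{b}$ and $L_{J_{\alpha}}(g)=\mathbf{M}_{\alpha}(G)$; since $N_G\le 4N-3$ we have $g\in\mathcal{P}_{4N-3}$, hence $g\in Path_k(\mathbf{a},\mathbf{b})$ for every $k\ge 4N-3$.

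Next I would check that $g$ is actually optimal. By Corollary~\ref{D_J_optimal}, the minimal value $\mathbf{M}_{\alpha}(G)$ equals $D_{J_{\alpha}}(\mathbf{a},\mathbf{b})$. Since $D_{J_{\alpha}}(\mathbf{a},\mathbf{b})\le D_{J_{\alpha}}^{(k)}(\mathbf{a},\mathbf{b})\le L_{J_{\alpha}}(g)=\mathbf{M}_{\alpha}(G)=D_{J_{\alpha}}(\mathbf{a},\mathbf{b})$ for every $k\ge 4N-3$, all of these numbers coincide, so $g$ attains the infimum in (\ref{geodesic_problem}) for each such $k$. This verifies the second hypothesis of Theorem~\ref{length_space}, and the corollary follows at once.

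I do not anticipate a genuine obstacle, since the argument is mostly an assembly of results already in hand; the points that deserve care are the estimate $N_G\le 4N-3$ for an acyclic transport path joining two measures each supported on at most $N$ atoms, and the check that passing to geodesic edges leaves $G$ acyclic with $\mathbf{M}_{\alpha}$ unchanged so that Lemma~\ref{path2curve} genuinely applies. As an alternative to quoting Theorem~\ref{length_space}, one could argue directly: reparametrize $g$ by arc length as $g:[0,L]\to\mathcal{A}_N(Y)$ with $L=D_{J_{\alpha}}(\mathbf{a},\mathbf{b})$, note $D_{J_{\alpha}}(g(s),g(t))\le L_{J_{\alpha}}(g\lfloor_{[s,t]})=t-s$ from the definition of $D_{J_{\alpha}}^{(4N-3)}$, and then invoke the triangle inequality for the metric $D_{J_{\alpha}}$ to force $D_{J_{\alpha}}(g(s),g(t))=|t-s|$ for all $s,t\in[0,L]$; but citing Theorem~\ref{length_space} is the cleaner route.
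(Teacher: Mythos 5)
Your proposal is correct and follows essentially the same route as the paper: the paper's proof likewise combines Corollary~\ref{D_J_optimal} (an optimal transport path yields, via Lemma~\ref{path2curve} and the edge count $N_G\le 4N-3$, a minimizer of the geodesic problem) with Theorem~\ref{length_space} applied to the perfect quasimetric $J_{\alpha}$. You simply spell out in full the details that the paper compresses into two sentences, so there is nothing to correct.
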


\begin{proof}
\bigskip By corollary\ \ref{D_J_optimal}, each optimal transport path $G$
determines a solution $g$ to the geodesic problem (\ref{geodesic_problem}).
Then, by theorem \ref{length_space}, $(\mathcal{A}_{N}\left( Y\right)
,D_{J_{\alpha }})$ becomes a length space.
\end{proof}

Since $\mathcal{A}_{1}\left( Y\right) \subset \mathcal{A}_{2}\left( Y\right)
\subset \cdots \subset \mathcal{A}_{N}\left( Y\right) \subset \cdots $, and $%
(\mathcal{A}_{N}\left( Y\right) ,D_{J_{\alpha }})$ is a length space for
each $N$, we have

\begin{proposition}
\label{all_atomic_measures}Suppose $\left( Y,d\right) $ is a geodesic metric
space. Then, $D_{J_{\alpha }}$ is a metric on the space $\mathcal{A}\left(
Y\right) $ of all atomic probability measures on $Y$. Moreover, $\left(
\mathcal{A}\left( Y\right) ,D_{J_{\alpha }}\right) $ is a length space.
\end{proposition}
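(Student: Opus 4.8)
The plan is to first pin down what $D_{J_\alpha}$ even means on all of $\mathcal{A}(Y)$, and then to transfer both the metric axioms and the length-space property from the already-proved statements on each $\mathcal{A}_N(Y)$. The crucial input is Corollary \ref{D_J_optimal}: for $\mathbf{a},\mathbf{b}\in\mathcal{A}_N(Y)$ one has $D_{J_\alpha}(\mathbf{a},\mathbf{b})=\min\{\mathbf{M}_\alpha(G):G\in Path(\mathbf{a},\mathbf{b})\}$, and the right-hand side depends only on $\mathbf{a}$ and $\mathbf{b}$, not on the ambient bound $N$. So the first step is to observe that if $\mathbf{a},\mathbf{b}\in\mathcal{A}_N(Y)\cap\mathcal{A}_M(Y)$ then the two values of $D_{J_\alpha}$ agree, which lets us define, for arbitrary $\mathbf{a},\mathbf{b}\in\mathcal{A}(Y)$, the quantity $D_{J_\alpha}(\mathbf{a},\mathbf{b})$ as this common value; it is finite because $Path(\mathbf{a},\mathbf{b})$ is nonempty (e.g.\ a product plan) and $\mathbf{M}_\alpha$ of any transport path is a finite sum of finite terms.

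Granting this consistency, I would check the metric axioms as follows. Non-negativity and symmetry are immediate from $\mathbf{M}_\alpha\geq 0$ and from reversing the orientation of a transport path. For the identity of indiscernibles and the triangle inequality, given $\mathbf{a},\mathbf{b}$ (resp.\ $\mathbf{a},\mathbf{b},\mathbf{c}$) in $\mathcal{A}(Y)$, I would choose $N$ so large that all of them lie in $\mathcal{A}_N(Y)$; then the global $D_{J_\alpha}$ agrees on these points with the metric $D_{J_\alpha}$ on $\mathcal{A}_N(Y)$ provided by the previous theorem, so $D_{J_\alpha}(\mathbf{a},\mathbf{b})=0$ exactly when $\mathbf{a}=\mathbf{b}$, and the triangle inequality holds. (Alternatively one can see the triangle inequality directly by concatenating optimal transport paths $G_1\in Path(\mathbf{a},\mathbf{c})$ and $G_2\in Path(\mathbf{c},\mathbf{b})$ into $G_1+G_2\in Path(\mathbf{a},\mathbf{b})$ and using $\mathbf{M}_\alpha(G_1+G_2)\leq\mathbf{M}_\alpha(G_1)+\mathbf{M}_\alpha(G_2)$, as in the proof of Proposition \ref{M_alpha_J_alpha}.) Hence $D_{J_\alpha}$ is a metric on $\mathcal{A}(Y)$.

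For the length-space claim I would again pick, for given $\mathbf{a},\mathbf{b}\in\mathcal{A}(Y)$, some $N$ with $\mathbf{a},\mathbf{b}\in\mathcal{A}_N(Y)$. By Corollary \ref{length_space_proof}, $(\mathcal{A}_N(Y),D_{J_\alpha})$ is a length space, so there is a curve $f:[0,L]\to\mathcal{A}_N(Y)$ with $f(0)=\mathbf{a}$, $f(L)=\mathbf{b}$, $L=D_{J_\alpha}(\mathbf{a},\mathbf{b})$, and $D_{J_\alpha}(f(t),f(s))=|t-s|$ for all $s,t$, the distances there being computed inside $\mathcal{A}_N(Y)$. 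Since every $f(t)$ lies in $\mathcal{A}_N(Y)$, the consistency established in the first step identifies these with the global distances, so the same $f$ witnesses that $(\mathcal{A}(Y),D_{J_\alpha})$ is a length space.

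The hard part — indeed the only step that is not purely formal — is the consistency statement: enlarging the ambient space from $\mathcal{A}_N(Y)$ to $\mathcal{A}_M(Y)$ could a priori make $D_{J_\alpha}(\mathbf{a},\mathbf{b})$ strictly smaller, since piecewise-metric Lipschitz curves are allowed to pass through measures with more atoms. What rules this out is precisely Corollary \ref{D_J_optimal}, which expresses $D_{J_\alpha}(\mathbf{a},\mathbf{b})$ as the minimum of $\mathbf{M}_\alpha$ over all transport paths between $\mathbf{a}$ and $\mathbf{b}$ — a quantity manifestly independent of any atom bound. Once this is invoked, everything else is a routine transfer of facts already in hand.
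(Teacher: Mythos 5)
Your proposal is correct and follows the paper's intended argument: the paper itself offers only the one-line observation that $\mathcal{A}(Y)=\bigcup_N\mathcal{A}_N(Y)$ together with the results already proved on each $\mathcal{A}_N(Y)$, and the details you supply (consistency of the distance across different $N$ via the formula in Corollary \ref{D_J_optimal}, then transferring the metric axioms and the length-space property from Corollary \ref{length_space_proof}) are exactly the intended justification. Your explicit identification of the consistency issue as the only nontrivial point, and its resolution through the $N$-independent expression $D_{J_\alpha}(\mathbf{a},\mathbf{b})=\min\{\mathbf{M}_\alpha(G):G\in Path(\mathbf{a},\mathbf{b})\}$, is a faithful (and slightly more careful) rendering of what the paper leaves implicit.
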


We now give some conclusive remarks:

\begin{remark}
In \cite{xia1}, we defined $d_{\alpha }\left( \mathbf{a},\mathbf{b}\right)
:=\min \left\{ \mathbf{M}_{\alpha }\left( G\right) :G\in Path\left( \mathbf{a%
},\mathbf{b}\right) \right\} $ for $0\leq \alpha <1$ and showed that $%
d_{\alpha }$ defines a metric on the space of (atomic) probability measures.
Moreover, we showed $\left( \mathcal{A}\left( Y\right) ,d_{\alpha }\right) $
is a length space. Now, from corollary \ref{D_J_optimal}, we see that $%
d_{\alpha }=D_{J_{\alpha }}$. That is, the metric $d_{\alpha }$ is just the
intrinsic metric on $\mathcal{A}\left( Y\right) $ induced by the quasimetric $%
J_{\alpha }$. Proposition \ref{all_atomic_measures} simply gives another
proof of $\left( \mathcal{A}\left( Y\right) ,d_{\alpha }\right) $ being a
length space. Furthermore, an optimal transport path studied in \cite{xia1}
is simply a geodesic in the length space $\left( \mathcal{A}\left( Y\right)
,D_{J_{\alpha }}\right) $.
\end{remark}

\begin{remark}
Suppose $\left( Y,d\right) $ is a geodesic metric space, and $\mathcal{P}%
_{\alpha }\left( Y\right) $ is the completion of the metric space $\left(
\mathcal{A}\left( Y\right) ,D_{J_{\alpha }}\right) $. Then, \ $\left(
\mathcal{P}_{\alpha }\left( Y\right) ,D_{J_{\alpha }}\right) $ is also a
length space. A geodesic in the length space $\left( \mathcal{P}_{\alpha
}\left( Y\right) ,D_{J_{\alpha }}\right) $ is also called an optimal
transport path between its endpoints.
\end{remark}

\end{document}